\documentclass[a4paper,12pt]{amsart}

\usepackage[utf8]{inputenc}
\usepackage[american]{babel}
\usepackage{amsmath,amssymb,amsthm}
\usepackage{thmtools}
\usepackage[shortlabels]{enumitem}
\usepackage{braket}
\usepackage{comment}
\usepackage[hidelinks]{hyperref}
\usepackage{cleveref}
\usepackage{color}
\usepackage[margin=2.5cm]{geometry}

\newcommand{\sech}{\operatorname{sech}}

\newcommand{\IC}{\mathbb C}
\newcommand{\IN}{\mathbb N}
\newcommand{\IR}{\mathbb R}

\newcommand{\cH}{\mathcal H}
\newcommand{\cJ}{\mathcal J}
\newcommand{\cL}{\mathcal L}
\newcommand{\cV}{\mathcal V}

\newcommand{\id}{\mathrm{id}}
\newcommand{\tr}{\operatorname{tr}}

\newcommand{\abs}[1]{\lvert#1\rvert}
\newcommand{\norm}[1]{\lVert#1\rVert}
\newcommand{\ip}[1]{\langle #1\rangle}

\renewcommand{\Re}{\operatorname{Re}}
\renewcommand{\Im}{\operatorname{Im}}

\renewcommand{\epsilon}{\varepsilon}
\renewcommand{\phi}{\varphi}

\newcommand{\Tr}{\mathrm{Tr}}

\newcommand{\dom}{\operatorname{dom}}

\newcommand{\binotimes}{\mathbin{\otimes_{\mathrm{bin}}}}

\newtheorem{lemma}{Lemma}[section]
\newtheorem{theorem}[lemma]{Theorem}
\newtheorem{corollary}[lemma]{Corollary}
\newtheorem{proposition}[lemma]{Proposition}

\newtheorem{conjecture}[lemma]{Conjecture}

\theoremstyle{definition}
\newtheorem{definition}[lemma]{Definition}

\theoremstyle{remark}
\newtheorem{remark}[lemma]{Remark}
\newtheorem{example}[lemma]{Example}

\title[KMS-symmetric Markov generators]{The Differential Structure of Generators of KMS-Symmetric Quantum Markov Semigroups}
\author{Matthijs Vernooij}
\address[MV]{Research and Development team, Enexis Group, Orthen 63, 5231 XP 's Hertogenbosch, The Netherlands\newline
Faculty of Electrical Engineering, Mathematics and Computer Science, Delft University of Technology, The Netherlands
}
\email{matthijs.vernooij@enexis.nl}
\author{Melchior Wirth}
\address[MW]{Faculty of Mathematics and Computer Science, Leipzig University, Neues Augusteum, Augustusplatz 10, 04109 Leipzig, Germany}
\email{melchior.wirth@uni-leipzig.de}
\date{}

\begin{document}

\begin{abstract}
    A KMS-symmetric quantum Markov semigroup is completely determined by the quadratic form in induces on the GNS Hilbert space. The quadratic forms that arise in this way are called quantum Dirichlet forms. In this article we give a complete description of bounded quantum Dirichlet forms in terms of derivations. In particular, we exhibit an explicit characterization of the bounded quantum Dirichlet forms on type $\mathrm{I}$ factors. We also show that any unbounded quantum Dirichlet form can be represented in terms of derivations. The key technical insight is that the modular group always restricts to a strongly continuous group on the domain of a quantum Dirichlet form such that the square root of the analytic generator is accretive with respect to the form norm.
\end{abstract}

\maketitle

\section{Introduction}

Quantum Markov semigroups have been first introduced and studied in mathematical physics as models of the time evolution of open quantum systems \cite{Lin76,GKS76,AL87}. Since then, they have become objects of intense research with importance to both mathematical physics and pure mathematics -- we refer to \cite{Fag99,Cip08,Ska19,Cip23} for some surveys.

As one-parameter semigroups of operators, quantum Markov semigroups are determined by their derivative at time zero, the generator. It is the superoperator that governs what is called the Lindblad master equation for open quantum systems in the physics literature. In their seminal work, Lindblad \cite{Lin76} and Gorini--Kossakowski--Sudarshan \cite{GKS76} completely characterized the generators of quantum Markov semigroups on matrix algebras and more generally of uniformly continuous quantum Markov semigroups on hyperfinite von Neumann algebras $M$ as operators of the form $\cL(x)=k^\ast x+xk-\Psi(x)$ with $k\in M$ and $\Psi\colon M\to M$ a normal completely positive map. Building on Christensen's deep work on the derivation problem, this characterization was extended to the generators of quantum Markov semigroups on arbitrary von Neumann algebras by Christensen and Evans \cite{CE79}.

One theme that is already present in the work of Lindblad and Christensen--Evans is the close relation between Markov generators and derivations. This theme has been further pursued, most notably by Sauvageot and Cipriani \cite{Sau89,Sau90,CS03b}. It takes probably its clearest form when the quantum Markov semigroup satisfies a detailed balance or symmetry condition, as is often the case for open quantum systems coupled to environments in equilibrium.

In this case, the quantum Markov semigroup can be extended to the GNS Hilbert space, which opens up the powerful toolbox of self-adjoint operators on a Hilbert space. Symmetric quantum Markov semigroups give rise to strongly continuous symmetric contraction semigroups, whose generator is a positive self-adjoint operator and is thus characterized by its induced quadratic form. The quadratic forms on the GNS Hilbert space that arise in this way are called conservative completely Dirichlet forms or quantum Dirichlet forms. Cipriani and Sauvageot gave a characterization of quantum Dirichlet forms in terms of derivations in the case when the reference state is a trace. This result has been influential across a wide range of fields, including analysis on fractals \cite{IRT12,HT13}, deformation and rigidity theory for von Neumann algebras \cite{Pet09a,Pet09b,OP10,DI16} and noncommutative optimal transport and Ricci curvature bounds \cite{BGJ22,BGJ23,Wir20,WZ21}.

However, while the condition of tracial symmetry is very natural in the context of noncommutative analysis, it has its limitations, especially when it comes to physical applications. For typical physical models the invariant states are Gibbs states of a reduced Hamiltonian, which are only traces at infinite temperature.  

For this reason, it is natural to consider also quantum Markov semigroups that are symmetric with respect to non-tracial states. This makes the theory richer, but also more complicated. For one, given a non-tracial state, there is not only one induced inner product, but a whole family of inner products and corresponding symmetry conditions, among them as the most important ones for applications the GNS and KMS symmetry.

A completely positive map $\Phi$ on a von Neumann algebra is called GNS-symmetric with respect to the faithful normal state $\phi$ if
\begin{equation*}
    \phi(\Phi(x)^\ast y)=\phi(x^\ast\Phi(y)),\quad x,y\in M,
\end{equation*}
and KMS-symmetric if
\begin{equation*}
    \langle \Phi(x)\Omega_\phi,\Delta_\phi^{1/2}y \Omega_\phi\rangle=\langle x\Omega_\phi,\Delta_\phi^{1/2}\Phi(y)\Omega_\phi\rangle,\quad x,y\in M,
\end{equation*}
where $\Omega_\phi$ the cyclic vector in the GNS representation and $\Delta_\phi$ is the modular operator.

For GNS-symmetric quantum Markov semigroups, a characterization of the associated quantum Dirichlet forms in terms of derivations that parallels the tracial case was given by the second-named author first in the bounded case \cite{Wir24a} and later extended to the unbounded case \cite{Wir22b,Wir24b}. The key ingredient is the following remarkable, if long known, fact: If a quantum Markov semigroup is GNS-symmetric, it commutes with the modular group of the reference state. In particular, the class of GNS-symmetric quantum Markov semigroups is a (strict) subset of the class of KMS-symmetric quantum Markov semigroups for a given reference states.

In terms of the associated quantum Dirichlet form $\mathcal E$, this commutation with the modular group is equivalent to saying that $\mathcal E$ is invariant under $(\Delta_\phi^{is})$, or, in more semigroup-theoretic terms, the group $(\Delta_\phi^{is})_{s\in\IR}$ restricts to a strongly continuous group $(V_s)_{s\in\IR}$ on $(\dom(\mathcal E),\norm\cdot_{\mathcal E})$ such that $V_{-i/2}$, the square root of the analytic generator, is a positive self-adjoint operator (the reason for formulating this result in terms of the square root of the analytic generator instead of the analytic generator itself will become apparent once we discuss the KMS-symmetric case).

On the side of KMS-symmetric quantum Markov semigroups, Ko \cite{Ko07} obtained an integral representation for bounded generators in terms of derivations. In \cite{VW23}, the authors also obtained a representation theorem for quantum Dirichlet forms associated with KMS-symmetric quantum Markov semigroups. The crucial new ingredient is the introduction and analysis of the $\cV$-transform, the solution operator for the operator equation $\frac 1 2\Delta_\phi^{1/4}X \Delta_\phi^{-1/4}+\frac 1 2\Delta_\phi^{-1/4}X\Delta_\phi^{1/4}=Y$, which occurs naturally when one tries to generalize approach from \cite{CS03a,Wir22b} to KMS-symmetric quantum Markov semigroups. If $X$ commutes with the modular operator, then clearly the solution is simply given by $X=Y$, so the $\cV$-transform is a genuinely new feature that only appears for KMS-symmetric semigroups that are not GNS-symmetric.

However, one drawback of the result from \cite{Ko07,VW23} is that they do not give a characterization of quantum Dirichlet forms, and not only for technical domain reasons, but for purely algebraic ones: Even on matrix algebras, there are derivations in the sense of \cite{VW23} such that the form $Q(A)=\norm{\delta(A)}^2$ is not a quantum Dirichlet form. Likewise, Ko did not give necessary and sufficient conditions for his derivations to give rise to a quantum Dirichlet form, although the results by Park discussed below come close to establishing the converse direction.

To obtain this converse implication in the GNS-symmetric case in \cite{Wir24b}, it was important to only consider derivations that reflect the invariance of the quantum Dirichlet form under the modular group. The failure of the converse of the representation result from \cite{VW23} therefore suggests that the quantum Dirichlet forms associated with KMS-symmetric quantum Markov semigroups also possess a, so far undiscovered,``hidden symmetry'' property in relation to the modular group.

In this article, we show that this is indeed the case. More precisely, in analogy to the GNS-symmetric case, we prove that for a quantum Dirichlet form $\mathcal E$ associated with a KMS-symmetric quantum Markov semigroup, the group $(\Delta_\phi^{is})_{s\in\IR}$ restricts to a strongly continuous group $(V_s)_{s\in\IR}$ on $(\dom(\mathcal E),\norm\cdot_{\mathcal E})$ such that $V_{-i/2}$ is accretive. To highlight the difference to the GNS-symmetric case, let us note that $V_{-i/2}$ is positive self-adjoint if and only if $\langle a,V_{-i/2}a\rangle\geq 0$ for all $a\in \dom(V_{-i/2})$, whereas it is accretive if and only if $\Re\langle a,V_{-i/2}a\rangle\geq 0$ for all $a\in \dom(V_{-i/2})$.

Based on this observation, we refine the results from \cite{Ko07,VW23} to give a complete characterization of bounded quantum Dirichlet forms in terms of derivations. Using an ultraproduct construction, we show that an analogous representation in terms of derivations also holds for unbounded quantum Dirichlet forms. Let us point out that in contrast to prior work (for example \cite{Ko09}), we do not make any additional assumptions on the domain of the generator of the quantum Markov semigroup.

Of particular interest in mathematical physics is the case when the underlying von Neumann algebra is a type $\mathrm{I}$ factor, i.\,e. of the form $B(H)$ for some Hilbert space $H$. In this context, Fagnola and Umanità \cite{FU10} showed that the parts $x\mapsto k^\ast x+xk$ and $\Psi$ in the Lindblad representation $\cL(x)=k^\ast x+xk-\Psi(x)$ of the generator can both be chosen KMS-symmetric. Amorim and Carlen \cite{AC21} refined this result (for $H$ finite-dimensional) by showing that the generators can in fact be parametrized by the cp map $\Psi$ alone. We will revisit these results here.

On the side of quadratic forms, Park \cite{Par00} gave the following construction of bounded quantum Dirichlet forms on standard forms of von Neumann algebras: Let us say a continuous function $f$ on the closure of the strip $S=\{z\in\IC:\abs{\Im z}<1/4\}$ is \emph{admissible} if $f$ is analytic on $S$, $f(t)\geq 0$ for $t\in\IR$, $f(t+i/4)+f(t-i/4)\geq 0$ for $t\in \IR$ and there exist $C>0$, $p>1$ such that $\abs{f(t+is)}\leq C(1+\abs t)^{-p}$ for all $t+is\in \overline S$. For example, $f(z)=\frac{2}{\cosh 2\pi z}$ satisfies these conditions.

If $x\in \dom(\sigma^\phi_{-i/4})\cap\dom(\sigma^\phi_{i/4})$ is self-adjoint, then
\begin{equation*}
    \mathcal E(a)=\int_\IR \norm{\sigma^\phi_{-i/4}(x)(\Delta_\phi^{it}a)-(\Delta_\phi^{it}a)\sigma^\phi_{i/4}(x)}_2^2 f(t)\,dt
\end{equation*}
defines a bounded quantum Dirichlet form on the standard Hilbert space of a von Neumann algebra. Using approximation techniques, this result has later been extended to construct unbounded quantum Dirichlet forms \cite{BK02}.

In this article, we show that the Dirichlet forms constructed by Park do not only provide a rich class of examples, but in fact exhaust the bounded quantum Dirichlet forms on $B(H)$ in the sense that any bounded quantum Dirichlet form is a (finite or infinite) sum of quantum Dirichlet forms of the type constructed by Park, and one can in fact always take the specific admissible function $f(z)=\frac{2}{\cosh 2\pi z}$. The accretivity of the operator $V_{-i/2}$ mentioned above is reflected in the condition $f(t+i/4)+f(t-i/4)\geq 0$ for an admissible function. 

Our results also have some remarkable technical consequences on approximation properties for quantum Dirichlet forms. In the tracial case, it was shown by Davies and Lindsay \cite{DL92} that the bounded vectors in the form domain form a $\ast$-algebra, and as a consequence of the Dirichlet form property, they are a form core. This was refined by the second-named author in the GNS-symmetric case \cite{Wir22b}, where he showed that the intersection of the form domain with the maximal Tomita algebra associated with the state is a Tomita algebra and a form core. The proof relied heavily on the invariance of the quantum Dirichlet form under the modular group in the GNS-symmetric case, but we show here that the result continues to hold in the KMS-symmetric case. These results underscore the usefulness of quadratic form techniques in the study of symmetric quantum Markov semigroups -- in contrast, it may happen that the largest $\ast$-subalgebra contained in the domain of the generator reduces to multiples of the identity.

A final question we address is that of embedding the codomain of the derivation arising from a quantum Dirichlet form into the $L^2$-space of a larger von Neumann algebra with a faithful normal conditional expectation onto the base algebra. For uniformly continuous (or more generally so-called $\Gamma$-regular ones) GNS-symmetric quantum Markov semigroups, this can always be achieved using an operator-valued free Araki--Woods construction \cite{JRS,Wir22b,KW25}. One key ingredient is that the invariance properties of quantum Dirichlet forms associated with GNS-symmetric quantum Markov semigroups give rise to ``proto modular data'' (termed a Tomita correspondence in \cite{Wir22b}), which ensures that the natural conditional expectation in Shlyakhtenko's operator-valued semicircular construction is faithful.

While this invariance under the modular group does not necessary hold in the KMS-symmetric case, the accretivity of the operator $V_{-i/2}$ discussed above gives rise to a strongly continuous group $(\cV_s)_{s\in\IR}$ on the codomain of the derivation such that $\cV_{-i/2}$ is accretive. We show that the group $(\cV_s)_{s\in\IR}$ can be deformed into a strongly continuous unitary group so that the codomain of the derivation gets the structure of a Tomita correspondence.

The structure of the article is as follows: In \Cref{sec:prelim} we collect some necessary preliminaries. In \Cref{sec:char_bdd} we prove an integral representation theorem (\Cref{thm:int_rep_form}) and give a full characterization of bounded quantum Dirichlet forms in terms of derivations (\Cref{thm:char_bdd}). In \Cref{sec:type_I} we focus on type $\mathrm{I}$ factors and show that in this case, the bounded quantum Dirichlet forms are exactly the (finite or infinite) sums of the Dirichlet forms constructed by Park.

In \Cref{sec:Tomita_alg} we explore some consequences of our characterization of bounded quantum Dirichlet forms for the domains of unbounded ones. In particular, we show that every quantum Dirichlet form $\mathcal E$ gives rise to a canonical Tomita algebra $\mathfrak A_{\mathcal E}$, which is a form core for $\mathcal E$, and we show that the square root of the analytic generator of the modular group, viewed as strongly continuous group on the domain of $\mathcal E$ with the form norm, is an accretive operator (\Cref{thm:analytic_gen_mod_grp_accretive}). In \Cref{sec:unbdd_Dir_forms} we use ultraproduct techniques to extend our result from the bounded case to give a representation of general quantum Dirichlet forms in terms of derivations (\Cref{thm:char_unbdd}). Finally, in \Cref{sec:Tomita_corr}, we show how the codomain of the derivation induced by a bounded quantum Dirichlet form gives rise to a Tomita correspondence and discuss the consequences of the operator-valued semicircular construction in this case (\Cref{thm:Tomita_corr_Dir_form}).

\subsection*{Acknowledgments} M.W. gratefully acknowledges funding by the Deutsche Forschungsgemeinschaft
(DFG, German Research Foundation) – Project number 565405322.

\section{Preliminaries}\label{sec:prelim}

In this section we collect some preliminaries to be used later in the article. The material here is well-known or follows easily from well-known facts, with the exception of \Cref{prop:KMS_state_inv}, which shows that a quantum Markov semigroup is KMS-symmetric with respect to a given (faithful normal) invariant state if and only if it is KMS-symmetric with respect to every (faithful normal) invariant state.

\subsection{Quadratic forms}\label{sec:quad_forms}

Let $H$ be a Hilbert space. A \emph{quadratic form} on $H$ is a map $Q\colon H\to [0,\infty]$ such that $Q(\xi+\eta)+Q(\xi-\eta)=2Q(\xi)+2Q(\eta)$ and $Q(\lambda \xi)=\abs{\lambda}^2Q(\xi)$ for all $\xi,\eta\in H$ and $\lambda\in \IC$. The set $\{\xi\in H\mid Q(\xi)<\infty\}$ is called the \emph{domain} of $Q$ and denoted by $\dom(Q)$. A quadratic form $Q$ is called \emph{densely defined} if $\dom(Q)$ is dense in $H$, \emph{closed} if it is lower semicontinuous and \emph{closable} if its restriction to $\dom(Q)$ is lower semicontinuous. If $Q$ is closable, then the lower semicontinuous relaxation
\begin{equation*}
    \overline Q\colon H\to [0,\infty],\,\xi\mapsto \liminf_{\eta\to \xi}Q(\eta)
\end{equation*}
is a closed quadratic form that coincides with $Q$ on $\dom(Q)$. In this case, the form $\overline Q$ is called the \emph{closure} of $Q$.

The \emph{form norm} $\norm\cdot_Q$ on $\dom(Q)$ is defined as $\norm{\cdot}_Q=(\norm{\cdot}_H^2+Q(\cdot))^{1/2}$. The form $Q$ is closed if and only if $(\dom(Q),\norm\cdot_Q)$ is a Banach space. A subset $D$ of $\dom(Q)$ is called a \emph{form core} for $Q$ if it is dense in $\dom(Q)$ with respect to the form norm.

By the polarization identity, there is a unique hermitian sesquilinear from on $\dom(Q)\times \dom(Q)$ that coincides with $Q$ on the diagonal. By a slight abuse of notation, we also denote this sesquilinear form by $Q$. By \cite[Proposition 10.5, Corollary 10.8]{Sch12}, if $L$ is a positive self-adjoint operator in $H$, the quadratic form $Q_L$ characterized by $\dom(Q_L)=\dom(L^{1/2})$ and $Q_L(\xi)=\norm{L^{1/2}\xi}^2$ for $\xi\in\dom(L^{1/2})$ is closed and densely defined, and the map $L\mapsto Q_L$ is a bijection from the set of positive self-adjoint operators in $H$ onto the set of closed densely defined quadratic forms on $H$.

A quadratic form $Q$ is called \emph{bounded} if there exists $C>0$ such that $Q(\xi)\leq C\norm{\xi}^2$ for all $\xi\in H$. The quadratic form $Q$ is bounded if and only if there exists a bounded self-adjoint operator $L$ on $H$ such that $Q=Q_L$. If $Q$ is a closed densely defined quadratic form on $H$ and $\epsilon>0$, then the form $Q^{(\epsilon)}$ defined by
\begin{equation*}
    Q^{(\epsilon)}(\xi)=\inf_{\eta\in H} Q(\eta)+\frac 1 \epsilon \norm{\xi-\eta}^2
\end{equation*}
is a bounded quadratic form, called the \emph{Moreau regularization} of $Q$. For every $\xi\in H$ one has $Q^{(\epsilon)}(\xi)\nearrow Q(\xi)$ as $\epsilon\searrow 0$. If $L$ is a positive self-adjoint operator in $H$, then $Q_L^{(\epsilon)}=Q_{L(1+\epsilon L)^{-1}}$, as can be easily deduced from \cite[Theorem 10.9]{Sch12}.

\subsection{Standard forms and modular theory}

In this subsection, we briefly discuss Haagerup's theory of standard forms of von Neumann algebras \cite{Haa75} and some basic aspects of modular theory. A quadruple $(M,H,J,P)$, where $H$ is a Hilbert space, $M$ is a von Neumann algebra on $H$, $J\colon H\to H$ is an anti-unitary involution and $P\subset H$ is a self-dual cone, is called a \emph{standard form} if it satisfies the following three conditions:
\begin{enumerate}[(a)]
    \item $JMJ=M^\prime$,
    \item $J \xi=\xi$ for all $\xi\in P_+$,
    \item $x JxJ (P)\subset P$ for all $x\in M$.
\end{enumerate}
The original definition by Haagerup contained the fourth condition $JxJ=x^\ast$ for $x\in \mathcal Z(M)$, but this was later shown to be redundant \cite[Lemma 3.19]{AH14}.

For every von Neumann algebra $M$ there exists a Hilbert space $H$, an anti-unitary involution $J\colon H\to H$, a self-dual cone $P\subset H$ and a faithful normal representation $\pi\colon M\to B(H)$ such that $(\pi(M),H,J,P)$ is a standard form \cite[Theorem 1.6]{Haa75}. Moreover, if $(M_k,H_k,J_k,P_k)$ is a standard form for $k\in \{1,2\}$ and $\alpha\colon M_1\to M_2$ is a $\ast$-isomorphism, then by \cite[Theorem 2.3]{Haa75} there exists a unique unitary $U\colon H_1\to H_2$ such that
\begin{itemize}
    \item $\alpha(x)=UxU^\ast$ for all $x\in M_1$,
    \item $J_2 U=UJ_1$,
    \item $U P_1=P_2$.
\end{itemize}
In this sense, every von Neumann algebra $M$ has an essentially unique standard form, which will be denoted by $(M,L^2(M),J,L^2_+(M))$ in this article.

The standard form of the von Neumann algebra tensor product has been characterized in \cite{MT84}. We only need the case when one of the algebras is a finite type $\mathrm I$ factor.

\begin{proposition}[{\cite[Proposition 2.4]{MT84}}]\label{prop:std_form_matrix_ampl}
    For $n\in\IN$ let $S^2_n$ denote the complex $n\times n$ matrices with the Hilbert--Schmidt inner product, let $J_n\colon S^2_n\to S^2_n$, $A\mapsto A^\ast$ and view $M_n(\IC)$ as subalgebra of $B(S^2_n)$ via left multiplication. If $(M,H,J,P)$ is a standard form and $P^{(n)}=\{\sum_{j,k=1}^n \xi_{jk}\otimes E_{jk}\in H\otimes S^2_n\mid \sum_{j,k=1}^n x_j J x_k J\xi_{j,k}\in P\}$, then $(M\otimes M_n(\IC),H\otimes S^2_n,J\otimes J_n,P^{(n)})$ is a standard form.
\end{proposition}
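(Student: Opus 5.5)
Since the standard form of $M$ is unique up to spatial isomorphism (Haagerup's uniqueness theorem), I would verify the standard form axioms directly for the quadruple $(M\otimes M_n(\IC),H\otimes S^2_n,J\otimes J_n,P^{(n)})$, where I read the condition defining $P^{(n)}$ as holding for all $x_1,\dots,x_n\in M$. First I check that $\tilde J:=J\otimes J_n$ is an anti-unitary involution. In matrix-unit coordinates, $\xi=\sum\xi_{jk}\otimes E_{jk}$ is mapped to $\tilde J\xi=\sum J\xi_{jk}\otimes E_{kj}$. For (a), I compute $\tilde J(x\otimes E_{jk})\tilde J$. Left multiplication by $E_{jk}$ on $S^2_n$ conjugated by $A\mapsto A^\ast$ becomes right multiplication by $E_{kj}$. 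Hence $\tilde J(M\otimes M_n)\tilde J=JMJ\otimes M_n(\IC)^{\mathrm{op}}=M'\otimes (M_n(\IC))'$, and the commutant of a tensor product with a type $\mathrm I_n$ factor is exactly this, so (a) holds.

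Next I show that $P^{(n)}$ is a self-dual cone. The trick is to note that for $x=(x_1,\dots,x_n)$ the quantity $\sum_{j,k}x_jJx_kJ\xi_{jk}$ equals $\tilde X\tilde J\tilde X\tilde J\xi$ compressed to the $(1,1)$ corner, where $\tilde X=\sum_j x_j\otimes E_{1j}$. With this, I would identify $P^{(n)}$ with the closed convex cone generated by vectors $\sum_{j,k}x_jJx_kJ\eta\otimes E_{jk}$ with $\eta\in P$ (equivalently, the closure of $\{\tilde y\tilde J\tilde y\tilde J(\eta\otimes E_{11})\}$). Self-duality then amounts to the pairing identity
\begin{equation*}
\ip{\xi,\textstyle\sum_{j,k}x_jJx_kJ\eta\otimes E_{jk}}=\ip{\textstyle\sum_{j,k}x_j^\ast Jx_k^\ast J\xi_{jk},\eta},
\end{equation*}
with the appropriate placement of adjoints, combined with self-duality of $P$. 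This pairing shows that the dual cone of the generated cone is exactly the set defined by the condition in $P^{(n)}$. The remaining point is that this set coincides with the generated cone. This is the step I expect to be the main obstacle: one must show that every element of the dual cone lies in the closed cone generated. I would handle it via the faithful normal state $\tilde\phi=\phi\otimes\tr$ on $M\otimes M_n$ (reducing first to $\sigma$-finite $M$ by a standard cutoff over projections, or working with weights) and the identification of its natural cone $\overline{\Delta^{1/4}_{\tilde\phi}(M\otimes M_n)_+\Omega}$. The natural cone is self-dual by Araki's theorem, and it can be checked to be equal to both descriptions.

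Finally, (b) follows because $\tilde J$ fixes the generators: $J(x_jJx_kJ\eta)=x_kJx_jJ\eta$ for $\eta\in P$, which matches the swap $E_{jk}\mapsto E_{kj}$. For (c), given $\tilde y\in M\otimes M_n$, I write $\tilde y\tilde J\tilde y\tilde J$ applied to a generator. Since the generators are of the form $\tilde X^\ast\tilde J\tilde X^\ast\tilde J(\eta\otimes E_{11})$, the product $\tilde y\tilde X^\ast$ is again of the same type, which gives invariance, and continuity passes this to the closure. As remarked in the paper, the centre condition is not needed.
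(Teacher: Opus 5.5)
The paper does not prove this proposition; it quotes it from \cite[Proposition 2.4]{MT84}. So your outline supplies an argument rather than competing with one, and it is correct. Your central reduction is the right one. Write $C$ for the closed convex cone generated by the vectors $\sum_{j,k}x_jJx_kJ\eta\otimes E_{jk}$ with $x_j\in M$, $\eta\in P$. Your pairing identity together with $P^\circ=P$ gives $C^\circ=P^{(n)}$, so everything reduces to $C^\circ\subseteq C$. The reverse inclusion then follows from the bipolar theorem, and (b), (c) follow from your generator computations once $C=P^{(n)}$. Compared with citing \cite{MT84}, your route needs only Tomita--Takesaki theory, Araki's self-duality of natural cones, and Haagerup's uniqueness and reduction results, at the price of a $\sigma$-finite cutoff.

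Since you only say the identification ``can be checked'', here is what has to be written out.
\begin{itemize}
\item For $\sigma$-finite $M$, assume $J=J_\phi$ and $P=P^\natural_\phi$ (by Haagerup's uniqueness theorem). Let $\tilde\Omega=n^{-1/2}\sum_j\Omega_\phi\otimes E_{jj}$ be the vector representing $\phi\otimes\tr$. On the core $M_n(M)\tilde\Omega$ one checks $S_{\tilde\phi}=S_\phi\otimes J_n$, hence $J_{\tilde\phi}=J\otimes J_n=\tilde J$.
\item For $X=[x_{jk}]$ a direct computation gives $X\tilde JX\tilde J\tilde\Omega=n^{-1/2}\sum_a\sum_{j,k}x_{ja}Jx_{ka}J\Omega_\phi\otimes E_{jk}$, which lies in $C$. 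Hence $P^\natural_{\tilde\phi}\subseteq C$, and therefore $C^\circ\subseteq(P^\natural_{\tilde\phi})^\circ=P^\natural_{\tilde\phi}\subseteq C$.
\item For general $M$, let $e\in M$ be a $\sigma$-finite projection. Then $eJeJ\otimes 1$ maps $P^{(n)}$ into the analogous cone for the reduced standard form $(eMe,eJeJH,J,eJeJP)$. By the $\sigma$-finite case, that cone is generated by vectors already in $C$. Letting $e\nearrow 1$ and using that $C$ is closed gives $P^{(n)}\subseteq C$.
\end{itemize}

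Two small corrections. First, your parenthetical ``equivalently, the closure of $\{\tilde y\tilde J\tilde y\tilde J(\eta\otimes E_{11})\}$'' is false unless you take convex hulls. For $M=\IC$ and $n=2$ that set is the closed but non-convex set of positive matrices of rank at most one. Nothing later depends on it, since $\tilde y\tilde J\tilde y\tilde J$ is linear. Second, (c) can be verified directly on $P^{(n)}$, without first knowing $C=P^{(n)}$, via the identity $\sum_{j,k}x_jJx_kJ(Y\tilde JY\tilde J\xi)_{jk}=\sum_{a,b}x_a'Jx_b'J\xi_{ab}$ with $x_a'=\sum_j x_jy_{ja}$.
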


The subset $L^2(M)^J=\{\xi\in L^2(M)\mid J\xi=\xi\}$ is a real Hilbert space. Since $L^2_+(M)\subset L^2(M)^J$ is a closed convex subset, there exists for every $\xi\in L^2(M)^J$ a unique element $\xi_+\in L^2_+(M)$ such that $\norm{\xi-\xi_+}=\min_{\eta\in L^2_+(M)}\norm{\xi-\eta}$. We write $\xi_-$ for $(-\xi)_+$ and note that $\xi=\xi_+-\xi_-$ for all $\xi\in L^2(M)^J$.

If $\phi\in M_\ast^+$, then there exists a unique vector $\Omega_\phi\in L^2_+(M)$ such that $\phi(x)=\langle \Omega_\phi,x\Omega_\phi\rangle$ for all $x\in M$ \cite[Lemma 2.10]{Haa75}. A functional $\phi\in M_\ast^+$ is faithful if and only if $\Omega_\phi$ is cyclic and separating for $M$. In this case, the quadratic form
\begin{equation*}
    Q_\phi^{(0)}\colon M\Omega_\phi\subset L^2(M)\to [0,\infty),\,x\Omega_\phi\mapsto \norm{x^\ast \Omega_\phi}^2
\end{equation*}
is well-defined and closable. If $Q_\phi$ denotes its closure, then the unique positive self-adjoint operator $\Delta_\phi$ in $L^2(M)$ such that $\dom(\Delta_\phi^{1/2})=\dom (Q_\phi)$ and $\norm{\Delta_\phi^{1/2}\xi}^2=Q_\phi(\xi)$ for all $\xi\in \dom(Q_\phi)$ is called the \emph{modular operator} of $\phi$.

By Tomita's theorem, $\Delta_\phi^{it}M\Delta_\phi^{-it}=M$ for all $t\in \IR$. The family $\sigma^\phi$ given by $\sigma^\phi_t(x)=\Delta_\phi^{it}x\Delta_\phi^{-it}$ for $x\in M$ and $t\in\IR$ is called the \emph{modular group} of $\phi$. An element $x\in M$ is called \emph{entire analytic} for $\sigma^\phi$ if $t\mapsto \sigma^\phi_t(x)$ has an extension to an entire function. In this case, this extension is unique by the identity theorem for analytic functions and denoted by $z\mapsto\sigma^\phi_t(x)$. The set $\{x\Omega_\phi\mid x\text{ entire analytic for }\sigma^\phi\}$ is called the \emph{maximal Tomita algebra associated with $\phi$}.

If $x\in M$, one defines $\pi_l(x\Omega)=x$, and if $y^\prime\in M^\prime$, one defines $\pi_r(y^\prime \Omega_\phi)=y^\prime$. Furthermore, if $a\in M\Omega_\phi$ and $b\in L^2(M)$ (resp. $a\in L^2(M)$ and $b\in M^\prime \Omega_\phi$), one defines $ab=\pi_l(a)b$ (resp. $ab=\pi_r(b)a$). This product is consistent in the sense that $\pi_l(a)b=\pi_r(b)a$ if $a\in M\Omega_\phi$ and $b\in M^\prime\Omega_\phi$. It does depend on the choice of the state $\phi$, but since the state will be fixed in the main part of the article, this should not cause any confusion.

A subset $\mathfrak A$ of the maximal Tomita algebra associated with $\phi$ is a \emph{Tomita subalgebra} if it is closed under the maps $J$, $\Delta_\phi^z$ for $z\in \IC$ and the product described in the previous paragraph.

\subsection{KMS-symmetric completely positive maps}

Let $A,B$ be $C^\ast$-algebras. Recall that a linear map $\Phi\colon A\to B$ is called \emph{positive} if $\Phi(A_+)\subset \Phi(B_+)$ and \emph{completely positive} if $\Phi\otimes\id((A\otimes M_n(\IC))_+)\subset (B\otimes M_n(\IC))_+$ for every $n\in \IN$. If $A$ and $B$ are unital, then $\Phi$ is called \emph{unital} if $\Phi(1)=1$. We simply write \emph{(u)cp} for (unital) completely positive

If $M$ is a von Neumann algebra and $\phi\in M_\ast^+$ is a faithful normal state, then a linear map $\Phi\colon M\to M$ is called \emph{KMS-symmetric (w.\,r.\,t. $\phi$)} if 
\begin{equation*}
    \langle x\Omega_\phi,\Delta_\phi^{1/2}\Phi(y)\Omega_\phi\rangle=\langle \Phi(x)\Omega_\phi,\Delta_\phi^{1/2} y\Omega_\phi\rangle
\end{equation*}
for all $x,y\in M$. Every KMS-symmetric linear map is weak$^\ast$ continuous by \cite[Proposition 2.3]{Cip97}. Moreover, if $\Phi$ is KMS-symmetric and unital, then $\phi\circ\Phi=\phi$ as a direct consequence of the definition.

Following \cite[Section 1]{Cip97}, we define embeddings $i_\phi\colon M\to L^2(M)$ and $j_\phi\colon M\to M_\ast$ by $i_\phi(x)=\Delta_\phi^{1/4}(x\Omega_\phi)$ and $j_\phi(x)(y)=\langle J x\Omega_\phi,y\Omega_\phi\rangle$. In other words, $j_\phi=i_\phi^\ast J i_\phi$ if we identify the dual of a Hilbert space with the conjugate Hilbert space.

A weak$^\ast$ continuous linear map $\Phi\colon M\to M$ is KMS-symmetric with respect to $\phi$ if and only if $j_\phi\circ \Phi=\Phi_\ast\circ j_\phi$ \cite[Lemma 2.2]{Cip97}. If $\Phi\colon M\to M$ is KMS-symmetric with respect to $\phi$, then there exists a unique bounded linear operator $T\in B(L^2(M))$ such that $T\circ i_\phi=i_\phi\circ \Phi$ \cite[Lemma 2.2, Proposition 2.4]{Cip97}. In the following, we write $\Phi^{(2,\phi)}$ or simply $\Phi^{(2)}$ for $T$ and call it the \emph{KMS implementation} of $\Phi$.

If $\Phi$ is KMS-symmetric with respect to $\phi$, then it is easily seen that $\Phi\otimes \id_{M_n(\IC)}$ is KMS-symmetric with respect to $\phi\otimes \tr$. Moreover, if we identity $L^2(M\otimes M_n(\IC))$ with $L^2(M)\otimes S^2_n$ as in \Cref{prop:std_form_matrix_ampl}, then $(\Phi\otimes \id_{M_n(\IC)})^{(2,\phi\otimes\tr)}=\Phi^{(2)}\otimes \id_{S^2_n}$.

We call a map $T\colon L^2(M)\to L^2(M)$ \emph{positivity-preserving} if $T(L^2_+(M))\subset L^2_+(M)$ and \emph{completely positivity-preserving} if $T\otimes \id(P^{(n)})\subset P^{(n)}$ for all $n\in\IN$, where $P^{(n)}\subset L^2(M)\otimes S^2_n$ is the positive cone described in \Cref{prop:std_form_matrix_ampl}.

The map $\Phi\mapsto \Phi^{(2,\phi)}$ is a bijection between the positive KMS-symmetric maps on $M$ and the symmetric positivity-preserving operators on $L^2(M)$. Moreover, $\Phi$ is unital if and only if $\Phi^{(2,\phi)}(\Omega_\phi)=\Omega_\phi$ and $\Phi$ is completely positive if and only $\Phi^{(2)}$ is completely positivity-preserving. The first two facts follow from \cite[Theorems 2.11, 2.12]{Cip97}, while the last one is an immediate consequence of the description of $(\Phi\otimes\id_{M_n(\IC)})^{(2,\phi\otimes\tr)}$ given above.

\subsection{Quantum Markov semigroups and quantum Dirichlet forms}

Let $M$ be a von Neumann algebra. A family $(\Phi_t)_{t\geq 0}$ of linear maps on $M$ is called a \emph{quantum Markov semigroup} if it satisfies the following conditions:
\begin{itemize}
    \item $\Phi_t$ is a normal ucp map for every $t\geq 0$,
    \item $\Phi_0=\id_M$, $\Phi_{s+t}=\Phi_s\Phi_t$ for all $s,t\geq 0$,
    \item $\Phi_t(x)\to x$ in weak$^\ast$ topology as $t\to 0$ for every $x\in M$.
\end{itemize}
The quantum Markov semigroup $(\Phi_t)_{t\geq 0}$ is called \emph{uniformly continuous} if $\norm{\Phi_t-\id_M}\to 0$ as $t\to 0$. Moreover, if $\phi$ is a faithful normal state on $M$, then $(\Phi_t)_{t\geq 0}$ is called \emph{KMS-symmetric} (w.\,r.\,t. $\phi$) if $\Phi_t$ is KMS-symmetric w.\,r.\,t. $\phi$ for every $t\geq 0$.

The fact that we usually drop $\phi$ from the notation is justified by the following result, which is a KMS analog of \cite[Lemma 4.16]{GJLL25} for GNS-symmetric maps.

\begin{proposition}\label{prop:KMS_state_inv}
    Let $(\Phi_t)_{t\geq 0}$ be a quantum Markov semigroup on $M$. If $\phi$, $\psi$ are faithful normal states on $M$ that are invariant under $(\Phi_t)_{t\geq 0}$ and $(\Phi_t)_{t\geq 0}$ is KMS-symmetric with respect to $\phi$, then it is also KMS-symmetric with respect to $\psi$.
\end{proposition}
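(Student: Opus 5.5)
The approach is to reduce KMS-symmetry with respect to $\psi$ to KMS-symmetry with respect to $\phi$. The link is the Connes cocycle $u_t=(D\psi:D\phi)_t$, which I will show lies in the fixed-point algebra of the semigroup, and the semigroup is bimodular over that algebra.

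\emph{Step 1: fixed points form a subalgebra over which $\Phi_t$ is bimodular.} Let $N=\{x\in M\mid \Phi_t(x)=x\text{ for all }t\geq 0\}$. Since each $\Phi_t$ is ucp and leaves the faithful state $\phi$ invariant, the Kadison--Schwarz inequality gives $\Phi_t(x^\ast x)-x^\ast x\geq 0$ for $x\in N$. This element has vanishing $\phi$-value, so it is zero. Hence $N$ is a von Neumann subalgebra contained in the multiplicative domain of every $\Phi_t$, and $\Phi_t(axb)=a\Phi_t(x)b$ for $a,b\in N$, $x\in M$.

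\emph{Step 2: a conditional expectation onto $N$.} Because the semigroup is $\phi$-invariant, the ergodic averages $\frac1T\int_0^T\Phi_t\,dt$ have weak$^\ast$ cluster points. These are normal, using the $L^2$-picture and KMS-symmetry: $\Phi_t^{(2)}$ is a self-adjoint contraction semigroup, so the averages converge strongly to the projection onto its fixed space. The limit $E$ is a faithful normal conditional expectation onto $N$ with $\phi\circ E=\phi$ and $\Phi_t\circ E=E\circ\Phi_t=E$. The same averaging applied to the invariant state $\psi$ gives $\psi=\psi\circ E$.

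By Takesaki's theorem, $N$ is $\sigma^\phi$-invariant and $\sigma^\phi|_N=\sigma^{\phi|_N}$; the same holds for $\psi$. Since $\phi=\phi|_N\circ E$ and $\psi=\psi|_N\circ E$, the cocycle satisfies
\[
(D\psi:D\phi)_t=(D\psi|_N:D\phi|_N)_t\in N .
\]

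\emph{Step 3: transfer the symmetry condition.} I will work in Haagerup's $L^1$-picture. KMS-symmetry with respect to $\phi$ reads
\[
\Phi_{t\ast}\bigl(D_\phi^{1/2}xD_\phi^{1/2}\bigr)=D_\phi^{1/2}\Phi_t(x)D_\phi^{1/2}.
\]
This is the content of $j_\phi\circ\Phi_t=\Phi_{t\ast}\circ j_\phi$ from \cite[Lemma 2.2]{Cip97}, and the goal is the same identity with $\psi$ in place of $\phi$.

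Dualizing Step 1, the predual map is also bimodular: $\Phi_{t\ast}(a\omega b)=a\Phi_{t\ast}(\omega)b$ for $a,b\in N$. For real $s,r$ we have $D_\psi^{is}=u_sD_\phi^{is}$. Hence, for $\omega$ in a suitable dense class, $\Phi_{t\ast}(D_\psi^{is}\omega D_\psi^{-ir})=u_s\,\Phi_{t\ast}(D_\phi^{is}\omega D_\phi^{-ir})\,u_r^\ast$.

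The plan is to form, for $x\in M$ and a test element $y\in M$, the two scalar functions
\[
F(z)=\Phi_{t\ast}\bigl(D_\psi^{z}xD_\psi^{1-z}\bigr)(y),\qquad G(z)=\bigl(D_\psi^{z}\Phi_t(x)D_\psi^{1-z}\bigr)(y)
\]
on the strip $0\leq\Re z\leq 1$. One then checks that they agree on a boundary line by combining the cocycle relation, the bimodularity, and the $\phi$-KMS identity. This identity is itself an analytic-continuation statement that yields equality of the analogous $\phi$-expressions on the whole strip. By uniqueness of analytic continuation, $F=G$, and evaluating at $z=1/2$ gives the $\psi$-KMS identity.

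\emph{Main obstacle.} The hard step is making Step 3 rigorous. Formally $D_\psi^{1/2}=u_{-i/2}D_\phi^{1/2}$, but $u_{-i/2}$ is unbounded (affiliated with $N$), so the boundary-value argument must control growth on the strip. I would first reduce to the case where $\psi\leq C\phi$ and $\phi\leq C\psi$. In that case $(D\psi:D\phi)$ extends to a bounded analytic family on the strip $-1/2\leq\Im z\leq 0$ (Connes' Radon--Nikodym theorem), the functions are bounded and continuous, and the argument goes through directly. For general $\psi$, I would replace $\psi$ by $\psi_\epsilon=(1-\epsilon)\psi+\epsilon\phi$, which is still invariant and faithful and is comparable to $\phi$. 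Then I pass to the limit $\epsilon\to 0$ in the weak$^\ast$-continuous identity $\langle x\Omega_{\psi_\epsilon},\Delta_{\psi_\epsilon}^{1/2}\Phi_t(y)\Omega_{\psi_\epsilon}\rangle=\langle\Phi_t(x)\Omega_{\psi_\epsilon},\Delta_{\psi_\epsilon}^{1/2}y\Omega_{\psi_\epsilon}\rangle$. The two sides equal $\langle J_\psi\Delta_{\psi_\epsilon}^{1/4}x\Omega_{\psi_\epsilon},\Delta_{\psi_\epsilon}^{1/4}\Phi_t(y)\Omega_{\psi_\epsilon}\rangle$ and its counterpart, and these depend continuously on the state in norm, by continuity of $\psi\mapsto i_\psi$ on bounded sets.
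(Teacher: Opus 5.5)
Your Step 3 has a genuine gap, and the problem is not only rigor: the functions $F$ and $G$ do not agree on either boundary line. Since $D_\psi^{is}xD_\psi^{-is}=\sigma^\psi_s(x)$, you get
\[
F(is)=\psi\bigl(\Phi_t(y)\sigma^\psi_s(x)\bigr),\qquad G(is)=\psi\bigl(y\,\sigma^\psi_s(\Phi_t(x))\bigr),
\]
and similarly $F(1+is)=\psi(\sigma^\psi_s(x)\Phi_t(y))$, $G(1+is)=\psi(\sigma^\psi_s(\Phi_t(x))y)$. At $s=0$, equality of these is exactly GNS-symmetry of $\Phi_t$ with respect to $\psi$.

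Indeed, if your argument worked it would give $F=G$ on the whole strip, and evaluating at $z=0$ would yield GNS-symmetry. Likewise, your claim that the $\phi$-KMS identity ``yields equality of the analogous $\phi$-expressions on the whole strip'' would show that every KMS-symmetric semigroup is GNS-symmetric, which is false. KMS-symmetric semigroups that are not GNS-symmetric are the whole point of this paper. The KMS identity is equality of two bounded analytic functions at the single point $z=1/2$, and nothing propagates from it.

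There is also a separate problem with your reduction. The state $\psi_\epsilon=(1-\epsilon)\psi+\epsilon\phi$ only satisfies $\phi\le\epsilon^{-1}\psi_\epsilon$. Boundedness of $(D\psi_\epsilon:D\phi)_{-i/2}$ requires $\psi_\epsilon\le C\phi$, which fails unless $\psi\le C'\phi$ already holds.

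Your Steps 1 and 2 are sound. The formal identity $D_\psi^{1/2}=u_{-i/2}D_\phi^{1/2}$ also contains the right idea. What you need is not analytic continuation of $F$ and $G$, but the fact that $\Omega_\psi$ lies in $\overline{N\Omega_\phi}$.

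In the dominated case $\psi\le C\phi$, Connes' theorem gives $\Omega_\psi=h\Omega_\phi$ with $h=u_{-i/2}\in N$. Then $D_\psi^{1/2}xD_\psi^{1/2}=hD_\phi^{1/2}xD_\phi^{1/2}h^\ast$, and $N$-bimodularity of $\Phi_{t\ast}$ together with the $\phi$-KMS identity finishes the proof in one line, with no comparison of $F$ and $G$.

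For general $\psi$, no domination is needed, and the paper argues as follows:
\begin{enumerate}
\item $N$ is $\sigma^\phi$-invariant, so $(N,\overline{N\Omega_\phi},J|_{\overline{N\Omega_\phi}},P_N)$ with $P_N=\overline{\{xJxJ\Omega_\phi\mid x\in N\}}$ is a standard form.
\item The vector $\xi\in P_N$ representing $\psi|_N$ also represents $\psi=\psi\circ E$ on all of $M$. Since $\xi\in P_N\subset L^2_+(M)$, this forces $\xi=\Omega_\psi$.
\item Choose $h_n\in N$ with $h_n\Omega_\phi\to\Omega_\psi$. The $\psi$-KMS identity for $(x,y)$ is then the limit of the $\phi$-KMS identity for $(h_n^\ast xh_n,y)$. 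This uses $\Phi_t(h_n^\ast xh_n)=h_n^\ast\Phi_t(x)h_n$ and the fact that $JyJ\in M^\prime$ commutes with $h_n$.
\end{enumerate}
This route avoids the Connes cocycle, its analytic continuation, and the approximation of $\psi$ entirely.
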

\begin{proof}
    Let $N=\{x\in M\mid \Phi_t(x)=x\text{ for all }t\geq 0\}$. By \cite[Theorem A]{Wat79}, the set $N$ is a von Neumann subalgebra of $M$, for every $x\in M$ the limit $E(x)=\lim_{t\to\infty}\frac 1t\int_0^t \Phi_s(x)\,ds$ exists in the strong operator topology and $E$ is a faithful normal conditional expectation such that $\phi\circ E=\phi$ and $\psi\circ E=\psi$. In particular, $N$ is invariant under $\sigma^\phi$ and $\sigma^\psi$ by Takesaki's theorem \cite[Theorem IX.4.2]{Tak03}.

    Let $H_N=\overline{N\Omega_\phi}\subset L^2(M)$. Since $N$ is invariant under $\sigma^\phi$, the subspace $H_N$ is invariant under $\Delta_\phi^{it}$ for all $t\in\IR$. Thus, if $x\in N$, then $J(x\Omega_\phi)=\Delta_\phi^{1/2}(x^\ast \Omega_\phi)\in H_N$. By continuity, it follows that $J(H_N)\subset H_N$. Let $J_N=J|_{H_N}$ and $P_N=\overline{\{x J_N x J_N \Omega_\phi\mid x\in N\}}$. 
    
    By \cite[Remark 1.2]{Haa75}, the quadruple $(N,H_N,J_N,P_N)$ is a standard form. Therefore, there exists a unique $\xi\in P_N$ such that $\psi(x)=\langle \xi,x\xi\rangle$ for all $x\in N$ by \cite[Lemma 2.10]{Haa75}. Let $h_n\in N$ such that $h_n\Omega_\phi\to \xi$. For $y\in M$ we have
    \begin{align*}
        \langle \xi,y\xi\rangle=\lim_{n\to \infty}\phi(h_n^\ast yh_n)=\lim_{n\to\infty}\phi(h_n^\ast E(y)h_n)=\langle \xi,E(y)\xi\rangle=\psi(E(y))=\psi(y).
    \end{align*}
    Since $P_N\subset L^2_+(M)$, we conclude that $\xi=\Omega_\psi$. Now we get 
    \begin{align*}
        \langle \Phi_t(x)\Omega_\psi,\Delta_\psi^{1/2}y \Omega_\psi\rangle&
        =\langle J y\Omega_\psi,\Phi_t(x)^\ast\Omega_\psi\rangle\\
        &=\lim_{n\to\infty} \langle \Phi_t(x)J y J h_n \Omega_\phi,h_n\Omega_\phi\rangle\\
        &=\lim_{n\to\infty}\langle \Phi_t(h_n^\ast x h_n)Jy J\Omega_\phi,\Omega_\phi\rangle\\
        &=\lim_{n\to\infty}\langle\Phi_t(h_n^\ast x h_n)\Omega_\phi,\Delta_\phi^{1/2}y\Omega_\phi\rangle\\
        &=\lim_{n\to\infty}\langle h_n^\ast xh_n\Omega_\phi,\Delta_\phi^{1/2}\Phi_t(y)\Omega_\phi\rangle\\
        &=\langle x\Omega_\psi,\Delta_\psi^{1/2}\Phi_t(y)\Omega_\psi\rangle
    \end{align*}
    for all $x,y\in M$.
\end{proof}


The \emph{generator} of a quantum Markov semigroup $(\Phi_t)_{t\geq 0}$ is the weak$^\ast$ closed and densely defined operator $\mathcal L$ in $M$ defined by
\begin{align*}
    \dom(\cL)&=\left\{x\in M\mid \lim_{t\to 0}\frac 1 t(x-\Phi_t(x))\text{ exists in weak}^\ast\text{ topology}\right\},\\
    \cL(x)&=\text{weak}^\ast\text{-}\lim_{t\to 0}\frac 1 t(x-\Phi_t(x)).
\end{align*}
The quantum Markov semigroup $(\Phi_t)_{t\geq 0}$ is uniformly continuous if and only if its generator is bounded. The generators of uniformly continuous quantum Markov semigroups can be characterized as follows \cite[Theorem 14.7]{EL77b}: A bounded operator $\cL$ on $M$ is the generator of a uniformly continuous quantum Markov semigroup if and only it is weak$^\ast$ continuous, $\cL(x^\ast)=\cL(x)^\ast$ for all $x\in M$, $\cL (1) = 0$ and $\cL$ is \emph{conditionally completely negative}, that is,
\begin{equation*}
\sum_{j=1}^n y_j^\ast\cL(x_j^\ast x_k)y_k\leq 0
\end{equation*}
whenever $x_1, \dots , x_n , y_1, \dots , y_n \in M$ with $\sum_{j=1}^n x_j y_j = 0$.

By \cite[Theorem 2.11, 2.12]{Cip97}, the map $(\Phi_t)_{t\geq 0}\mapsto (\Phi_t^{(2,\phi)})_{t\geq 0}$ is a bijection from the KMS-symmetric quantum Markov semigroups onto the strongly continuous semigroups of symmetric, completely positivity-preserving maps on $L^2(M)$ that leave $\Omega_\phi$ invariant. Moreover, $(\Phi_t)_{t\geq 0}$ is uniformly continuous if and only if $(\Phi_t^{(2,\phi)})_{t\geq 0}$ is uniformly continuous.

Every strongly continuous symmetric contraction semigroup $(T_t)_{t\geq 0}$ on a Hilbert space $H$ is uniquely determined by its generator, which is the unique positive self-adjoint operator $L$ in $H$ such that $T_t=e^{-tL}$ for $t\geq 0$. A positive self-adjoint operator $L$ in $H$ in turn is uniquely determined by its associated quadratic form, which is given by $\dom(Q_L)=\dom(L^{1/2})$ and $Q_L(\xi)=\norm{L^{1/2}\xi}^2$ for $\xi\in \dom(L^{1/2})$.

A \emph{conservative Dirichlet form (w.\,r.\,t. $\Omega_\phi$)} is a closed densely defined quadratic form $\mathcal E$ on the standard Hilbert space $L^2(M)$ such that $\mathcal E(\Omega_\phi)=0$, $\mathcal E(J\xi)=\mathcal E(\xi)$ for all $\xi\in L^2(M)$ and $\mathcal E(\xi_+)\leq \mathcal E(\xi)$ for all $\xi\in L^2(M)^{J}$. A conservative Dirichlet form $\mathcal E$ is called conservative completely Dirichlet form or \emph{quantum Dirichlet form} if the form $\mathcal E^{(n)}$ on $L^2(M)\otimes S^2_n$ defined by
\begin{equation*}
    \mathcal E^{(n)}\left(\sum_{j,k=1}^n \xi_{jk}\otimes E_{jk}\right)=\sum_{j,k=1}^n \mathcal E(\xi_{jk})
\end{equation*}
is a conservative Dirichlet form for all $n\in\IN$. Here $L^2(M)\otimes S^2_n$ is considered as the standard Hilbert space of $M\otimes M_n(\IC)$ in the sense specified in \Cref{prop:std_form_matrix_ampl}.

It follows essentially from \cite[Theorem 4.11]{Cip97} that the map $L\mapsto Q_L$ is a bijection between the generators of strongly continuous semigroups of symmetric positivity-preserving maps on $L^2(M)$ that leave $\Omega_\phi$ invariant and quantum Dirichlet forms on $L^2(M)$. Combined with result on the KMS implementation, we get a bijection between KMS-symmetric quantum Markov semigroups on $M$ and quantum Dirichlet forms on $L^2(M)$.

The following lemma will come in very handy to approximate quantum Dirichlet forms by bounded ones. This is essentially known, but we did not find a direct proof in the literature that does not go through the associated generators, so we decided to include it here. In contrast to the notation introduced in Subsection \ref{sec:quad_forms}, we write $\mathcal E_\epsilon$ for the Moreau regularization of $\mathcal E$ to clearly distinguish it from the amplification $\mathcal E^{(n)}$ defined above.

\begin{lemma}\label{lem:Moreau_reg_Dir}
    If $\mathcal E$ is a quantum Dirichlet form on $L^2(M)$ and $\epsilon>0$, then the Moreau regularization $\mathcal E_\epsilon$ is also a quantum Dirichlet form.
\end{lemma}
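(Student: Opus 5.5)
The plan is to verify the defining properties of a quantum Dirichlet form for $\mathcal E_\epsilon$ directly from the variational formula
\begin{equation*}
    \mathcal E_\epsilon(\xi)=\inf_{\eta\in L^2(M)}\mathcal E(\eta)+\frac 1\epsilon\norm{\xi-\eta}^2 ,
\end{equation*}
without passing through generators. Since $\mathcal E_\epsilon$ is bounded, it is everywhere finite, densely defined and closed (a bounded quadratic form is continuous). So only the following remain: conservativity, $J$-invariance, the Markov property $\mathcal E_\epsilon(\xi_+)\leq\mathcal E_\epsilon(\xi)$, and the same properties for all matrix amplifications.

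Conservativity and $J$-invariance are immediate. Taking $\eta=\Omega_\phi$ gives $0\leq\mathcal E_\epsilon(\Omega_\phi)\leq \mathcal E(\Omega_\phi)=0$. Since $J$ is an anti-unitary involution and $\mathcal E\circ J=\mathcal E$, the substitution $\eta\mapsto J\eta$ shows $\mathcal E_\epsilon(J\xi)=\mathcal E_\epsilon(\xi)$.

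For the Markov property, fix $\xi\in L^2(M)^J$ and let $\eta$ be any competitor. First reduce to $J$-real $\eta$. Put $\eta_r=\frac12(\eta+J\eta)$. Then $\mathcal E(\eta_r)\leq \frac12\mathcal E(\eta)+\frac12\mathcal E(J\eta)=\mathcal E(\eta)$ by convexity and $J$-invariance. Likewise $\norm{\xi-\eta_r}\leq\norm{\xi-\eta}$, because $\xi-\eta_r=\frac12((\xi-\eta)+J(\xi-\eta))$. So the infimum may be taken over $\eta\in L^2(M)^J$.

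For such $\eta$, use $\eta_+$ as a competitor for $\xi_+$. We have $\mathcal E(\eta_+)\leq\mathcal E(\eta)$ by the Markov property of $\mathcal E$. The remaining ingredient is that $\zeta\mapsto\zeta_+$ is a nonexpansive map on the real Hilbert space $L^2(M)^J$, being the metric projection onto the closed convex cone $L^2_+(M)$. This gives $\norm{\xi_+-\eta_+}\leq\norm{\xi-\eta}$, and taking the infimum yields $\mathcal E_\epsilon(\xi_+)\leq\mathcal E_\epsilon(\xi)$. This step is the only point needing care. The key facts are that positive parts are defined via projection in the real space $L^2(M)^J$, and that the infimum can be restricted to $J$-real vectors.

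For complete Dirichletness, I will show $(\mathcal E_\epsilon)^{(n)}=(\mathcal E^{(n)})_\epsilon$. With this identity, the argument above applied to the Dirichlet form $\mathcal E^{(n)}$ on the standard form $(M\otimes M_n(\IC),L^2(M)\otimes S^2_n,J\otimes J_n,P^{(n)})$ of \Cref{prop:std_form_matrix_ampl} finishes the proof. The identity holds because both $\mathcal E^{(n)}$ and $\norm{\cdot}^2$ on $L^2(M)\otimes S^2_n$ split as sums over the matrix entries $\xi_{jk}$. The infimum over $\eta=\sum\eta_{jk}\otimes E_{jk}$ therefore decouples into independent infima over each $\eta_{jk}$, giving $\sum_{j,k}\mathcal E_\epsilon(\xi_{jk})$.
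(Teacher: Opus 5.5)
Your proposal is correct and follows essentially the same route as the paper: it verifies everything directly from the variational formula, uses that $\zeta\mapsto\zeta_+$ is $1$-Lipschitz for the Markov property, and uses the entrywise decoupling $(\mathcal E^{(n)})_\epsilon=(\mathcal E_\epsilon)^{(n)}$ for complete Dirichletness. Your explicit reduction to $J$-real competitors via $\eta_r=\frac12(\eta+J\eta)$ is a welcome addition, since the paper's chain of inequalities tacitly applies $\eta\mapsto\eta_+$ to arbitrary $\eta\in L^2(M)$, where it is only defined when $J\eta=\eta$.
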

\begin{proof}
    Since $0\leq\mathcal E_\epsilon\leq \mathcal E$, the property $\mathcal E_\epsilon(\Omega_\phi)=0$ is immediate. If $\xi\in L^2(M)$, then
    \begin{align*}
        \mathcal E_\epsilon(J\xi)
        &=\inf_{\eta\in L^2(M)}\mathcal E(\eta)+\frac 1 \epsilon\norm{J \xi-\eta}^2\\
        &=\inf_{\eta\in L^2(M)}\mathcal E(J \eta)+\frac 1 \epsilon\norm{J(\xi-\eta)}^2\\
        &=\mathcal E_\epsilon(\xi).
    \end{align*}
    Moreover, if $\xi\in L^2(M)^{J}$, then
    \begin{align*}
        \mathcal E_\epsilon(\xi_+)&=\inf_{\eta\in L^2(M) }\mathcal E(\eta)+\frac 1 \epsilon\norm{\xi_+-\eta}^2\\
        &\leq \inf_{\eta\in L^2(M)}\mathcal E(\eta_+)+\frac 1 \epsilon\norm{\xi_+-\eta_+}^2\\
        &\leq \inf_{\eta\in L^2(M)}\mathcal E(\eta)+\frac 1 \epsilon \norm{\xi-\eta}^2\\
        &=\mathcal E(\xi),
    \end{align*}
    where we used that $\xi\mapsto \xi_+$ is $1$-Lipschitz in the second inequality. Thus $\mathcal E_\epsilon$ is a conservative Dirichlet form.

    Finally, since
    \begin{align*}
        (\mathcal E^{(n)})_\epsilon((\xi_{jk})_{j,k})&=\inf_{\eta\in L^2(M)}\sum_{j,k=1}^n\mathcal E(\eta_{jk})+\frac 1 \epsilon \sum_{j,k=1}^n\norm{\xi_{jk}-\eta_{jk}}^2\\
        &=\sum_{j,k=1}^n\inf_{\eta\in L^2(M)} \mathcal E(\eta)+\frac 1 \epsilon \norm{\xi_{jk}-\eta}^2\\
        &=\sum_{j,k=1}^n \mathcal E_\epsilon(\xi_{jk})\\
        &=(\mathcal E_\epsilon)^{(n)}((\xi_{jk})_{j,k}),
    \end{align*}
    the quadratic form $\mathcal E_\epsilon$ is a completely Dirichlet form.
\end{proof}

\subsection{\texorpdfstring{The $\mathcal V$-transform}{The V-transform}}

The $\mathcal V$-transform was introduced by the authors in \cite{VW23}. Let $M$ be a von Neumann algebra and $\phi\colon M\to \IC$ a faithful normal state. It was shown in \cite[Lemma 3.1]{VW23} that for every $T\in B(L^2(M))$ the integral 
\begin{equation*}
    2\int_0^\infty \Delta_\phi^{1/4}e^{-r\Delta_\phi^{1/2}}T \Delta_\phi^{1/4}e^{-r\Delta_\phi^{1/2}}\,dr
\end{equation*}
converges in weak operator topology and defines a bounded linear operator $\check T$ on $L^2(M)$. The map $\mathcal V\colon B(L^2(M))\to B(L^2(M))$, $T\mapsto \check T$ is called the $\mathcal V$-transform. The $\mathcal V$-transform is a normal unital completely positive map on $B(L^2(M))$ \cite[Proposition 3.3]{VW23}. Moreover, if $T\in B(L^2(M))$ is completely positivity-preserving, commutes with $J$ and leaves $\Omega_\phi$ invariant, the same is true for $\check T$.

The $\mathcal V$-transform is characterized as the unique solution of an operator equation in the following sense \cite[Lemma 3.4]{VW23}: If $T\in B(L^2(M))$, then $R\in B(L^2(M))$ satisfies $R=\check T$ if and only if 
\begin{equation*}
    \frac 1 2\langle \Delta_\phi^{1/4}\xi,R\Delta_\phi^{-1/4}\eta\rangle+\frac1  2\langle \Delta_\phi^{-1/4}\xi,R\Delta_\phi^{1/4}\eta\rangle=\langle \xi,T\eta\rangle
\end{equation*}
for all $\xi,\eta\in \dom(\Delta_\phi^{1/4})\cap \dom(\Delta_\phi^{-1/4})$.

\subsection{Analytic generators}

Let $Y$ be a Banach space and $X=Y^\ast$. A \emph{point-weak$^\ast$ continuous group} on $X$ is a family $(V_s)_{s\in\IR}$ of linear operators on $X$ satisfying the following conditions:
\begin{itemize}
    \item $V_s$ is weak$^\ast$ continuous for every $s\in\IR$,
    \item $V_0=\id_{X}$, $V_{s+t}=V_sV_t$ for all $s,t\in \IR$,
    \item $V_s x\to x$ in weak$^\ast$ topology as $s\to 0$ for every $x\in X$.
\end{itemize}
By \cite[Lemma 1.1 and Proposition 2.1]{CZ76}, the operator $V_s$ is bounded for every $s\in \IR$ and there exist $c>0$ and $\omega\in \IR$ such that $\norm{V_s}\leq c e^{\omega \abs{s}}$ for all $s\in\IR$.

If $X$ is reflexive, then a linear operator is weak$^\ast$ continuous if and only if it is bounded. Moreover, if $X$ is reflexive, every point-weak$^\ast$ continuous group $(V_s)_{s\in\IR}$ is automatically strongly continuous in the sense that $\norm{V_s x-x}\to 0$ as $s\to 0$ for every $x\in X$ \cite[Theorem I.5.8]{EN00}.

For $\alpha\in \IR$ let $S_\alpha=\{z\in \IC: 0\leq \operatorname{sgn}(\alpha)\Im z\leq \abs{\alpha}\}$ and let $D_\alpha$ be the set of all $x\in X$ for which the map $s\mapsto V_s x$ has a weak$^\ast$ continuous extension $F_x\colon \overline S_\alpha\to X$ that is analytic on the interior of $S_\alpha$. If it exists, the extension $F_x$ is unique by the identity theorem for analytic functions.

For $z\in \IC$ we define the operator $V_z$ in $X$ by
\begin{align*}
    \dom(V_z)=D_{\Im z},\quad V_zx=F_x(z)
\end{align*}
The operator $V_{-i}$ is called the \emph{analytic generator} of $(V_s)_{s\in\IR}$.

For every $z\in \IC$ the operator $V_z$ is weak$^\ast$ closed, injective and the family $(V_z)_{z\in\IC}$ satisfies \cite[Theorem 2.4]{CZ76}:
\begin{itemize}
    \item $V_z^{-1}=V_{-z}$ for all $z\in\IC$,
    \item $V_{z+w}=V_z V_w$ for all $z,w\in\IC$ with $(\Im z)(\Im w)\geq 0$.
\end{itemize}

\begin{lemma}\label{lem:Haagerup_approx_groups}
    Let $Y$ be a Banach space and let $(V_s)_{s\in\IR}$ be a point-weak$^\ast$ continuous group on $Y^\ast$. If $x\in Y^\ast$, then for every $n\in\IN$ the weak$^\ast$ integral
    \begin{equation*}
        x_n=\sqrt\frac n \pi\int_\IR e^{-ns^2}V_s x\,ds
    \end{equation*}
    exists, belongs to $\bigcap_{z\in\IC}\dom(V_z)$, satisfies
    \begin{equation*}
        V_zx_n=\sqrt\frac n\pi \int_\IR e^{-n(s-z)^2 }V_sx\,ds
    \end{equation*}
    for all $z\in\IC$ and $x_n\to x$ in weak$^\ast$ topology as $n\to\infty$. Moreover, if $x\in \dom(V_z)$, then 
    \begin{equation*}
        V_z x_n=\sqrt\frac n \pi\int_\IR e^{-ns^2}V_{s+z} x\,ds.
    \end{equation*}
\end{lemma}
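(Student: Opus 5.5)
The approach is the standard Gaussian smoothing argument for one-parameter groups, carried out in the weak$^\ast$ topology. Fix $x\in Y^\ast$.

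\emph{Existence.} By \cite{CZ76} we have $\norm{V_s}\leq ce^{\omega\abs s}$. Hence for every $y\in Y$ the function $s\mapsto e^{-ns^2}\langle V_sx,y\rangle$ is integrable. It is continuous by point-weak$^\ast$ continuity of the group, and it is dominated by $ce^{-ns^2+\omega\abs s}\norm x\norm y$. The map $y\mapsto\int e^{-ns^2}\langle V_sx,y\rangle\,ds$ is a bounded linear functional on $Y$, so it defines $x_n\in Y^\ast$.

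\emph{Analytic extension.} Using weak$^\ast$ continuity of $V_t$ and the group law, we get
\begin{equation*}
V_tx_n=\sqrt{n/\pi}\int e^{-ns^2}V_{s+t}x\,ds=\sqrt{n/\pi}\int e^{-n(s-t)^2}V_sx\,ds
\end{equation*}
after a change of variables. We then define
\begin{equation*}
F(z)=\sqrt{n/\pi}\int_\IR e^{-n(s-z)^2}V_sx\,ds .
\end{equation*}
For $z=a+ib$ we have $\abs{e^{-n(s-z)^2}}=e^{-n(s-a)^2+nb^2}$. The integral therefore converges weak$^\ast$, locally uniformly in $z$, against the bound $ce^{\omega\abs s}$. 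Differentiation under the integral (Morera applied to each $\langle F(z),y\rangle$) shows that $F$ is entire, and it is weak$^\ast$ continuous. Since $F$ extends $t\mapsto V_tx_n$, its restriction to each closed strip $\overline S_{\Im z}$ witnesses $x_n\in D_{\Im z}$ with $V_zx_n=F(z)$. This holds for all $z$.

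\emph{Convergence.} We write
\begin{equation*}
\langle x_n-x,y\rangle=\sqrt{1/\pi}\int e^{-u^2}\langle V_{u/\sqrt n}x-x,y\rangle\,du .
\end{equation*}
The integrand tends to $0$ pointwise by weak$^\ast$ continuity at $0$. It is dominated by $e^{-u^2}(ce^{\omega\abs u}+1)\norm x\norm y$ for $n\geq1$. Dominated convergence then gives $x_n\to x$ weak$^\ast$.

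\emph{Last formula.} Let $x\in\dom(V_z)$, with extension $F_x$ on $\overline S_{\Im z}$. Consider
\begin{equation*}
G(w)=\sqrt{n/\pi}\int e^{-ns^2}F_x(s+w)\,ds\quad\text{for } w\in\overline S_{\Im z}.
\end{equation*}
For real $w$, $G$ agrees with $V_wx_n$. The candidate formula is $G(z)$, and we must show $G(z)=F(z)$.

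This is the main obstacle, since it requires growth control of $F_x$ inside the strip. The plan is as follows. Boundedness of $V_s$ gives $F_x(s+w)=V_sF_x(w)$ on the strip by uniqueness of analytic extension. Moreover, $F_x$ is weak$^\ast$ continuous on the compact set $\{\Re w=0\}\cap\overline S_{\Im z}$, so it is bounded there by uniform boundedness. Together these give $\norm{F_x(s+w)}\leq ce^{\omega\abs s}C'$. With this bound, $G$ is weak$^\ast$ continuous on the strip and analytic inside.

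Both $G$ and $F$ restricted to the strip are analytic extensions of $t\mapsto V_tx_n$. By the identity theorem (Schwarz reflection applied to the scalar functions $\langle\cdot,y\rangle$, or simply uniqueness of $D_{\Im z}$-extensions), they coincide. Hence
\begin{equation*}
V_zx_n=G(z)=\sqrt{n/\pi}\int e^{-ns^2}V_{s+z}x\,ds,
\end{equation*}
where $V_{s+z}x=F_x(s+z)$.
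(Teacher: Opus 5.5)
Your proof is correct, and it takes a more self-contained route than the paper. The paper does not argue the lemma itself. It cites the proof of Lemma 2.2 in \cite{CZ76} for everything except the last formula, and Corollary 2.5 there for the last formula.

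Your existence, analytic-extension and convergence steps are the standard Gaussian-smoothing computation. For the last formula you replace the citation with a direct argument:
\begin{itemize}
\item You obtain $F_x(s+w)=V_sF_x(w)$ on the strip from uniqueness of extensions.
\item You bound $F_x$ on the vertical segment using weak$^\ast$ compactness and the uniform boundedness principle. This gives $\norm{F_x(s+w)}\leq c'e^{\omega\abs{s+\Re w}}$, which makes $G$ an admissible extension of $t\mapsto V_tx_n$.
\item Uniqueness (Schwarz reflection on the scalar functions $\langle\cdot,y\rangle$) then forces $G=F$ on the strip. You also correctly note that $F_x(s+z)=V_{s+z}x$, since $\Im(s+z)=\Im z$.
\end{itemize}

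The citation buys brevity and access to the general calculus of \cite{CZ76}. Your route uses only the definition of $D_\alpha$, the exponential bound on $\norm{V_s}$, and the reflection principle.

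One small point to make explicit: you establish analyticity only in the weak$^\ast$ sense. If the definition of $D_\alpha$ is read as requiring norm analyticity, add that weak$^\ast$-analytic, locally bounded functions into $Y^\ast$ are norm analytic, because $Y$ is norming for $Y^\ast$. All your functions $F$, $G$ and $F_x$ are locally bounded by the estimates you derive.
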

\begin{proof}
    Everything except for the last claim is shown in the proof of \cite[Lemma 2.2]{CZ76}. The last claim follows from \cite[Corollary 2.5]{CZ76}.
\end{proof}

If $X$ is reflexive, then the extension of $(V_s)_{s\in \IR}$ to complex parameters can be characterized as follows \cite[Theorem 4.2]{Uit00}:
\begin{align*}
    \dom(V_z)&=\left\{x\in X: \lim_{n\to\infty}\sqrt\frac n\pi\int_\IR e^{-n(s-z)^2}V_s x\,ds\text{ exists}\right\},\\
    V_z \xi&=\lim_{n\to\infty}\sqrt\frac n\pi\int_\IR e^{-n(s-z)^2}V_sx\,ds.
\end{align*}

\begin{lemma}\label{lem:core_analytic_generator}
    Let $X$ be a reflexive Banach space and $(V_s)_{s\in\mathbb R}$ a point-weak$^\ast$ continuous group on $X$. If $z\in \IC$ and $D\subset \dom(V_z)$ is a linear subspace that is dense in $X$ and invariant under $(V_s)_{s\in\IR}$, then $D$ is a core for $V_z$.
\end{lemma}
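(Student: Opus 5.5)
We prove the claim directly: given $x\in\dom(V_z)$, we construct elements $y_n\in D$ with $y_n\to x$ and $V_zy_n\to V_zx$. This is possible by combining the Gaussian regularization of \Cref{lem:Haagerup_approx_groups} with the invariance of $D$ under $(V_s)_{s\in\IR}$. Since $X$ is reflexive, the group is strongly continuous and all weak$^\ast$ integrals are Bochner integrals in norm.

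\textbf{Step 1.} Fix $x\in\dom(V_z)$ and set
\begin{equation*}
x_n=\sqrt{\tfrac n\pi}\int_\IR e^{-ns^2}V_sx\,ds .
\end{equation*}
By \Cref{lem:Haagerup_approx_groups},
\begin{equation*}
V_zx_n=\sqrt{\tfrac n\pi}\int_\IR e^{-ns^2}V_sV_zx\,ds ,
\end{equation*}
using $V_{s+z}x=V_sV_zx$. Strong continuity and the bound $\norm{V_s}\le ce^{\omega\abs s}$ give $x_n\to x$ and $V_zx_n\to V_zx$ in norm, since Gaussian averages of continuous, exponentially bounded functions converge at $s=0$. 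It therefore suffices to approximate each fixed $x_n$ in the graph norm of $V_z$ by elements of $D$.

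\textbf{Step 2.} Fix $n$. By density of $D$, pick $d_k\in D$ with $d_k\to x$ in $X$. Consider
\begin{equation*}
(d_k)_n=\sqrt{\tfrac n\pi}\int_\IR e^{-ns^2}V_sd_k\,ds .
\end{equation*}
By \Cref{lem:Haagerup_approx_groups},
\begin{equation*}
V_z(d_k)_n=\sqrt{\tfrac n\pi}\int_\IR e^{-n(s-z)^2}V_sd_k\,ds ,
\end{equation*}
and the same formula holds with $d_k$ replaced by $x$. Since $\abs{e^{-n(s-z)^2}}=e^{-n(s-\Re z)^2+n(\Im z)^2}$ is integrable against $ce^{\omega\abs s}$, both $(d_k)_n\to x_n$ and $V_z(d_k)_n\to V_zx_n$ as $k\to\infty$.

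\textbf{Step 3.} This is the main obstacle: $(d_k)_n$ is an integral of elements of $D$, but need not lie in $D$ itself. We handle it by approximating the integral by Riemann sums.

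Because $s\mapsto V_sd$ is norm continuous and exponentially bounded, we can do the following. Truncate to $[-R,R]$, then approximate the integral by finite Riemann sums
\begin{equation*}
\sum_j c_jV_{s_j}d_k\in D ,
\end{equation*}
where $D$ is used to be invariant under $(V_s)_{s\in\IR}$ and to be a linear subspace. Take the coefficients $c_j=\sqrt{n/\pi}\,e^{-ns_j^2}\Delta s_j$.

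We need the same Riemann sum to also approximate the graph component. For $d\in D\subset\dom(V_z)$ we have $V_zV_sd=V_sV_zd$: apply $V_z$ on $\dom(V_z)$, which commutes with $V_s$ by uniqueness of analytic extensions. Hence
\begin{equation*}
V_z\Bigl(\sum_j c_jV_{s_j}d_k\Bigr)=\sum_j c_jV_{s_j}V_zd_k ,
\end{equation*}
which is a Riemann sum for
\begin{equation*}
\sqrt{\tfrac n\pi}\int_\IR e^{-ns^2}V_sV_zd_k\,ds .
\end{equation*}
By the last formula in \Cref{lem:Haagerup_approx_groups}, this integral equals $V_z(d_k)_n$. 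Since $s\mapsto V_sV_zd_k$ is norm continuous, these Riemann sums converge.

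\textbf{Conclusion.} Combining Steps 1–3, a diagonal argument yields elements of $D$ converging to $x$ in the graph norm of $V_z$. Hence $D$ is a core for $V_z$.
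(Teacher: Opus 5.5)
Your proof is correct and follows essentially the same route as the paper: Gaussian regularization of $x$, approximation of the regularized $x$ by regularized elements of $D$ using the $e^{-n(s-z)^2}$ formula and the exponential bound, and then invariance of $D$ to place the regularized elements of $D$ in the graph-norm closure of $D$. The only difference is cosmetic: you make the last step explicit with Riemann sums and the commutation $V_zV_s=V_sV_z$ on $\dom(V_z)$, whereas the paper notes that the integral converges absolutely in $(\dom(V_z),\norm\cdot_{V_z})$ and therefore lies in the graph-norm closed span of $\{V_sy\mid s\in\IR\}$.
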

\begin{proof}
    Let $x\in \dom(V_z)$ and let $(x_k)$ be a sequence in $D$ such that $x_k\to x$ in $H$. By \Cref{lem:Haagerup_approx_groups}, we have $\sqrt\frac n\pi\int_\IR e^{-ns^2}V_s(x_k-x)\,ds\in \dom(V_z)$ and 
    \begin{equation*}
        V_z\left(\sqrt\frac n\pi\int_\IR e^{-ns^2}V_s(x-x_k)\,ds\right)=\sqrt\frac n\pi\int_\IR e^{-n(s-z)^2}V_s(x-x_k)\,ds.
    \end{equation*}
    Moreover, there exist $c,\omega>0$ such that $\norm{V_s}\leq c e^{\omega\abs s}$ for all $s\in\IR$. Thus
    \begin{align*}
        \left\lVert V_z\left(\sqrt\frac n\pi\int_\IR e^{-ns^2}V_s(x_k-x)\,ds\right)\right\rVert&\leq \sqrt\frac n\pi\int_\IR \abs{e^{-n(s-z)^2}}\norm{V_s(x-x_k)}\,ds\\
        &\leq \sqrt\frac n\pi e^{n(\Im z)^2}\norm{x-x_k}\int_\IR e^{-n(s-\Re z)^2} c e^{\omega\abs{s}}\,ds.
    \end{align*}
    As $\int_\IR e^{-n(s-\Re z)^2} c e^{\omega\abs{s}}\,ds<\infty$, it follows that $\left\lVert V_z\left(\sqrt\frac n\pi\int_\IR e^{-ns^2}V_s(x_k-x)\,ds\right)\right\rVert\to 0$ as $k\to \infty$. Combined with the same argument for $z=0$, this implies that 
    \begin{equation*}
        \sqrt\frac n\pi\int_\IR e^{-ns^2}V_s x_k\,ds\to \sqrt\frac n\pi\int_\IR e^{-ns^2}V_s x\,ds
    \end{equation*}
    as $k\to\infty$ in the graph norm of $V_z$.

    Since $\sqrt\frac n\pi\int_\IR e^{-ns^2}V_s x\,ds\to x$ in the graph norm of $V_z$ as a consequence of \Cref{lem:Haagerup_approx_groups}, we conclude that $x$ belongs to the closed linear span of 
    \begin{equation*}
        \left\{\sqrt\frac n\pi\int_\IR e^{-ns^2}V_sy\,ds:y\in D,n\in\mathbb N\right\}
    \end{equation*}
    with respect to the graph norm.
    
    Moreover, as
    \begin{align*}
        \sqrt\frac n\pi\int_\IR e^{-ns^2}\norm{V_s y}_{V_z}\,ds&=\sqrt\frac n\pi\int_\IR e^{-ns^2}(\norm{V_s y}^2+\norm{V_{s+z} y}^2)^{1/2}\,ds\\
        &\leq K\left(\sqrt \frac n\pi\int_\IR e^{-ns^2}e^{\omega\abs{s}}\,ds\right)\norm{y}_{V_z},
    \end{align*}
    the integral $\sqrt\frac n\pi\int_\IR e^{-ns^2}V_s y\,ds$ converges absolutely in $(\dom(V_z),\norm\cdot_{V_z})$. Hence it belongs to the closed linear span of $\{V_s y\mid s\in\IR\}$ in $(\dom(V_z),\norm\cdot_{V_z})$. Finally, since $D$ is invariant under $(V_s)_{s\in\IR}$, whenever $y\in D$, then the linear span of $\{V_s y\mid s\in\IR\}$ is contained in $D$. Altogether, we have proven that $x$ belongs to the closure of $D$ with respect to $\norm\cdot_{V_z}$.
\end{proof}

\subsection{Correspondences}

Let $M$, $N$ be von Neumann algebras. An \emph{$M$-$N$ correspondence} is a Hilbert space $\cH$ together with normal unital $\ast$-homomorphisms $\pi_l^\cH\colon M\to B(\cH)$ and $\pi_r^{\cH}\colon N^{\mathrm{op}}\to B(\cH)$ with commuting images. We use bimodule notation and write $x\xi y$ for $\pi_l^\cH(x)\pi_r^{\cH}(y^{\mathrm{op}})\xi$ whenever $x\in M$, $y\in N$ and $\xi\in\cH$. For example, if $(M,L^2(M),J,L^2_+(M))$ is a standard form, then $L^2(M)$ with the actions $x\xi y=x J y^\ast J \xi$ becomes an $M$-$M$ correspondence, called the \emph{trivial} correspondence.

We say that the pair $(\cH,\cJ)$ is a \emph{self-dual $M$-$M$ correspondence} if $\cH$ is an $M$-$M$ correspondence and $\cJ\colon\cH\to\cH$ is an anti-unitary involution such that $\cJ(x\xi y)=y^\ast (\cJ\xi)x^\ast$ for all $x,y\in M$ and $\xi\in \cH$. For example, if $(M,L^2(M),J,L^2_+(M))$ is a standard form, then $(L^2(M),J)$ is a self-dual $M$-$M$ correspondence.

Let $\phi$ be a faithful normal state on $M$. In \cite{Wir22b}, the second-named author introduced the concept of a \emph{Tomita correspondence} over $(M,\phi)$, which is a triple $(\cH,\cJ,(\mathcal U_t)_{t\in \IR})$ consisting of a self-dual $M$-$M$ correspondence $(\cH,\cJ)$ and a strongly continuous unitary group $(\mathcal U_t)_{t\in \mathbb R}$ such that $\mathcal U_t\cJ=\cJ \mathcal U_t$ for all $t\in \IR$ and $\mathcal U_t(x\xi y)=\sigma^\phi_t(x)(\mathcal U_t\xi)\sigma^\phi_t(y)$ for all $x,y\in M$, $\xi\in\cH$ and $t\in\IR$.

For example, if $(M,L^2(M),J,L^2_+(M))$ is a standard form and $\phi$ is a faithful normal state on $M$, then $(L^2(M),J,(\Delta_\phi^{it})_{t\in\IR})$ is a Tomita correspondence over $(M,\phi)$. More generally, if $M\subset\hat M$ is a unital inclusion of von Neumann algebras, $\hat\phi$ is a faithful normal state on $\hat M$ and $E\colon \hat M\to M$ is a $\hat\phi$-preserving conditional expectation, then $(L^2(\hat M),J_{\hat M},(\Delta_{\hat \phi}^{it})_{t\in\IR})$ is a Tomita correspondence over $(M,\hat\phi|_M)$. Moreover, using an operator-valued semicircular construction \cite{Shl99}, one can show (see \cite[Section 4.2]{Wir22b}, \cite[Section 5.2]{KW25}) that for every Tomita correspondence $(\cH,\cJ,(\mathcal U_t)_{t\in\IR})$ over $(M,\phi)$ there exists a von Neumann algebra $\hat M\supset M$, a faithful normal state $\hat\phi$ such that $\hat \phi|_M=\phi$ and a faithful normal conditional expectation $E\colon\hat M\to M$ such that $\cH\subset L^2(\hat M)$, $J_{\hat M}|_{\cH}=\cJ$ and $\Delta_{\hat\phi}^{it}|_{\cH}=\mathcal U_t$ for all $t\in\IR$.

\section{Characterization of bounded quantum Dirichlet forms}\label{sec:char_bdd}

In this section, we prove an integral representation for bounded quantum Dirichlet forms (\Cref{thm:int_rep_form}) before giving a full characterization of bounded quantum Dirichlet forms in terms of derivations (\Cref{thm:char_bdd}).

Throughout this section, $M$ denotes a von Neumann algebra and $\phi$ a faithful normal state on $M$.

We start with a simple consequence of the residue theorem, which is a variant of results commonly used in the proof of Tomita's theorem (see \cite[Section VI.1]{Tak03} for example).

\begin{lemma}\label{lem:appl_residue_thm}
    Let $S=\{z\in\mathbb C\mid -1/4<\Re z<1/4\}$. If $f\colon \overline S\to\IC$ is a bounded continuous function that is analytic on $S$, then
    \begin{equation*}
        f(0)=\int_{\mathbb R}(f(-1/4+it)+f(1/4+it))\frac{dt}{\cosh 2\pi t}.
    \end{equation*}
\end{lemma}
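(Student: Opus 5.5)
The plan is to apply Cauchy's integral formula on the rectangle $R_T=\{z: -1/4\le \Re z\le 1/4,\ -T\le \Im z\le T\}$ to $g(z)=f(z)h(z)$, where $h$ is a meromorphic kernel with exactly one simple pole inside $S$, located at $0$. The natural choice is
\begin{equation*}
    h(z)=\frac{1}{\sin 2\pi z},
\end{equation*}
which on $\overline S$ vanishes only at $z=0$, since the zeros of $\sin 2\pi z$ are the points $z\in\frac12\mathbb Z$. Its residue at $0$ is $\frac{1}{2\pi}$. On the right edge $z=1/4+it$ we have $\sin(\pi/2+2\pi i t)=\cos(2\pi i t)=\cosh 2\pi t$. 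On the left edge $z=-1/4+it$ we get $\sin(-\pi/2+2\pi it)=-\cosh 2\pi t$. The integrand therefore decays like $e^{-2\pi\abs{t}}$ along both vertical edges.

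The first step is to handle the boundary behaviour. Since $f$ is only continuous on $\overline S$ and analytic in the interior, I would first apply the residue theorem to the shrunken rectangle $\{\abs{\Re z}\le 1/4-\delta\}$, where $f$ is analytic in a neighbourhood. Then I let $\delta\to 0$, using boundedness and continuity of $f$ together with dominated convergence, since $1/\abs{\sin 2\pi z}$ is uniformly integrable on the vertical lines near $\Re z=\pm1/4$. Alternatively one can simply cite the standard version of the residue theorem for functions continuous up to the boundary. The residue theorem then gives
\begin{equation*}
    \oint_{\partial R_T} f(z)h(z)\,dz = 2\pi i\cdot\frac{f(0)}{2\pi}= i f(0).
\end{equation*}

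The second step is to show that the horizontal edges $\Im z=\pm T$ contribute nothing as $T\to\infty$. There $\abs{\sin 2\pi z}\ge \sinh(2\pi T)$, and $f$ is bounded while the edges have length $1/2$, so these integrals tend to $0$.

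The third step is to compute the vertical edges with counterclockwise orientation. The right edge, traversed upward with $dz=i\,dt$, gives $i\int f(1/4+it)\frac{dt}{\cosh 2\pi t}$. The left edge, traversed downward, gives $-i\int f(-1/4+it)\frac{dt}{-\cosh 2\pi t}=i\int f(-1/4+it)\frac{dt}{\cosh2\pi t}$. Summing and dividing by $i$ yields the claim.

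The only mildly delicate point is the justification of the boundary limit in the first step. Given the boundedness hypothesis and the exponential decay of $1/\cosh$, this is routine.
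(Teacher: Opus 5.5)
Your proposal is correct and follows essentially the same route as the paper, which also applies the residue theorem to $g(z)=f(z)/\sin 2\pi z$ (simple pole at $0$ with residue $f(0)/2\pi$) on the shrunken rectangles $\{\abs{\Re z}\le \frac14-\epsilon,\ \abs{\Im z}\le R\}$. It then lets $\epsilon\to 0$ and $R\to\infty$, using the decay of $g$ on the horizontal edges, with the same orientation and sign bookkeeping on the vertical edges that you give (one small wording slip: it is $\sin 2\pi z$, not $h$, that vanishes only at $0$ on $\overline S$).
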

\begin{proof}
    For $z\in S\setminus\{0\}$ let $g(z)=\frac{f(z)}{\sin 2\pi z}$. The function is meromorphic on $S$ with a single pole with residue $f(0)/2\pi$ at $0$. Moreover, $\lim_{R\to\infty}\abs{g(s\pm iR)}=0$ uniformly in $s\in [-1/4,1/4]$.
    
    For $R,\epsilon>0$ let $\gamma_{R,\epsilon}$ be the positively oriented curve running along the boundary of $\{z\in \IC:\abs{\Re z}\leq \frac 1 4-\epsilon,\abs{\Im z}\leq R\}$. By the residue theorem,
    \begin{equation*}
        f(0)=-i\int_{\gamma_{R,\epsilon}}g(z)\,dz.
    \end{equation*}
    Since $g$ is continuous on $S\setminus \{0\}$, we have
    \begin{equation*}
        \lim_{\epsilon\to 0}\int_{\gamma_{R,\epsilon}}g(z)\,dz=\int_{\gamma_R}g(z),dz,
    \end{equation*}
    where $\gamma_R$ is the positively oriented curve running along the boundary of $\{z\in \IC:\abs{\Re z}\leq \frac 1 4,\abs{\Im z}\leq R\}$. Second, because $\lim_{R\to\infty}\abs{g(s\pm iR)}=0$ uniformly in $s\in [-1/4,1/4]$, we can take the limit $R\to\infty$ to get
    \begin{align*}
        f(0)
        &=-i\lim_{R\to\infty}\int_{\gamma_R}g(z)\,dz\\
        &=\int_\IR (g(1/4+it)-g(-1/4+it))\,dt\\
        &=\int_\IR (f(-1/4+it)+f(1/4+it))\frac{dt}{\cosh 2\pi t}.\qedhere
    \end{align*}
    
\end{proof}

As a first consequence, we obtain a new integral formula for the $\mathcal V$-transform.

\begin{proposition}
    If $T\in B(L^2(M))$ and $\xi,\eta\in L^2(M)$, then
    \begin{equation*}
        \langle \xi,\check T\eta\rangle=\int_\IR \langle \Delta_\phi^{it}\xi,T\Delta_\phi^{it}\eta\rangle\frac{2\,dt}{\cosh 2\pi t}.
    \end{equation*}
\end{proposition}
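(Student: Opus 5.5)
Define $R$ by the right-hand side: $\langle\xi,R\eta\rangle=\int_\IR\langle\Delta_\phi^{it}\xi,T\Delta_\phi^{it}\eta\rangle\frac{2\,dt}{\cosh 2\pi t}$. Since $\int_\IR\frac{2\,dt}{\cosh 2\pi t}=1$ and $\Delta_\phi^{it}$ is unitary, this is a bounded sesquilinear form with norm at most $\norm T$. It therefore defines $R\in B(L^2(M))$ with $\norm R\le\norm T$. By the characterization of the $\mathcal V$-transform \cite[Lemma 3.4]{VW23}, it suffices to show
\begin{equation*}
\tfrac12\langle\Delta_\phi^{1/4}\xi,R\Delta_\phi^{-1/4}\eta\rangle+\tfrac12\langle\Delta_\phi^{-1/4}\xi,R\Delta_\phi^{1/4}\eta\rangle=\langle\xi,T\eta\rangle
\end{equation*}
for all $\xi,\eta\in\dom(\Delta_\phi^{1/4})\cap\dom(\Delta_\phi^{-1/4})$.

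Fix such $\xi,\eta$ and set $f(z)=\langle\Delta_\phi^{\bar z}\xi,T\Delta_\phi^{z}\eta\rangle$ for $z\in\overline S$, where $S=\{-1/4<\Re z<1/4\}$ as in \Cref{lem:appl_residue_thm}. Here $z\mapsto\Delta_\phi^{z}\eta$ is bounded, continuous on $\overline S$ and analytic on $S$, because $\eta\in\dom(\Delta_\phi^{\pm1/4})$; this follows from the spectral theorem by splitting $\eta$ into its spectral parts on $[1,\infty)$ and $(0,1)$. Similarly $z\mapsto\Delta_\phi^{\bar z}\xi$ is continuous and anti-analytic, so the antilinearity of the inner product in the first slot makes $f$ analytic. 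Moreover $f$ is bounded and continuous on $\overline S$.

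We have $f(0)=\langle\xi,T\eta\rangle$. On the boundary, $\Delta_\phi^{\overline{\pm1/4+it}}\xi=\Delta_\phi^{-it}\Delta_\phi^{\pm1/4}\xi$ and $\Delta_\phi^{\pm1/4+it}\eta=\Delta_\phi^{it}\Delta_\phi^{\pm1/4}\eta$. Hence
\begin{equation*}
f(\pm1/4+it)=\langle\Delta_\phi^{-it}\Delta_\phi^{\pm1/4}\xi,T\Delta_\phi^{it}\Delta_\phi^{\pm1/4}\eta\rangle.
\end{equation*}
\Cref{lem:appl_residue_thm} gives $f(0)=\int(f(-1/4+it)+f(1/4+it))\frac{dt}{\cosh2\pi t}$. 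We compare this with the definition of $R$ applied to the pairs $(\Delta_\phi^{1/4}\xi,\Delta_\phi^{-1/4}\eta)$ and $(\Delta_\phi^{-1/4}\xi,\Delta_\phi^{1/4}\eta)$.

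The main obstacle is getting the conjugations and signs to match. The definition of $R$ involves $\langle\Delta_\phi^{it}\xi',T\Delta_\phi^{it}\eta'\rangle$, whereas the residue formula produces $\Delta_\phi^{-it}$ on the left and pairs the same power $\Delta_\phi^{\pm1/4}$ on both sides. So the choice of $f$ must be adjusted. I would instead take
\begin{equation*}
f(z)=\langle\Delta_\phi^{-\bar z}\xi',T\Delta_\phi^{z}\eta'\rangle,
\end{equation*}
which is still analytic in $z$. Then $f(it)=\langle\Delta_\phi^{it}\xi',T\Delta_\phi^{it}\eta'\rangle$ and $f(\pm1/4+it)=\langle\Delta_\phi^{it}\Delta_\phi^{\mp1/4}\xi',T\Delta_\phi^{it}\Delta_\phi^{\pm1/4}\eta'\rangle$. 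Combining the two boundary terms reproduces exactly the two terms of the operator equation, each with weight $\frac12$ relative to $\frac{2\,dt}{\cosh2\pi t}$, and gives $f(0)=\langle\xi',T\eta'\rangle$. With the right $f$ the argument is routine; the only remaining work is to check the boundedness and continuity hypotheses of \Cref{lem:appl_residue_thm} via the spectral theorem. Uniqueness in \cite[Lemma 3.4]{VW23} then yields $R=\check T$.
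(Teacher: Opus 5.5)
Your final argument is correct and is essentially the paper's proof. You define $R$ through the absolutely convergent integral, apply \Cref{lem:appl_residue_thm} to $f(z)=\langle\Delta_\phi^{-\bar z}\xi,T\Delta_\phi^{z}\eta\rangle$ (exactly the function the paper uses, with boundedness and analyticity on the strip as in \cite[Lemma VI.2.3]{Tak03}), and conclude $R=\check T$ from the uniqueness in \cite[Lemma 3.4]{VW23}. In a write-up, simply delete the initial choice $f(z)=\langle\Delta_\phi^{\bar z}\xi,T\Delta_\phi^{z}\eta\rangle$, since only the corrected $f$ produces the two terms of the operator equation.
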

\begin{proof}
    First note that
    \begin{equation*}
        \int_\IR \abs{\langle \Delta_\phi^{it}\xi,T\Delta_\phi^{it}\eta\rangle}\frac{2\,dt}{\cosh 2\pi t}\leq \norm{T}\norm{\xi}\norm{\eta}\int_\IR\frac{2\,dt}{\cosh 2\pi t}=\norm{T}\norm{\xi}\norm{\eta}.
    \end{equation*}
    Hence the integral converges absolutely and there exists a unique $R\in B(L^2(M))$ such that $\langle \xi,R\eta\rangle=\int_\IR \langle \Delta_\phi^{it}\xi,T\Delta_\phi^{it}\eta\rangle\frac{2\,dt}{\cosh 2\pi t}$ for all $\xi,\eta\in L^2(M)$.
    
    If $\xi,\eta\in \dom(\Delta_\phi^{1/4})\cap \dom(\Delta_\phi^{-1/4})$, then
    \begin{align*}
        &\frac 1 2\langle \Delta_\phi^{1/4}\xi,R \Delta_\phi^{-1/4}\eta\rangle+\frac 1 2\langle \Delta_\phi^{-1/4}\xi,R \Delta_\phi^{1/4}\eta\rangle\\
        &\quad=\int_\IR(\langle \Delta_\phi^{1/4+it}\xi,T\Delta_\phi^{-1/4+it}\eta\rangle+\langle \Delta_\phi^{-1/4+it}\xi,T\Delta_\phi^{1/4+it}\eta\rangle)\frac{2\,dt}{\cosh 2\pi t}.
    \end{align*}
    For $z\in \IC$ with $\abs{\Re z}\leq \frac 1 4$ let $f(z)=\langle \Delta_\phi^{-\bar z}\xi,T\Delta_\phi^z \eta\rangle$. By \cite[Lemma VI.2.3]{Tak03}, the function $f$ is bounded and continuous on the closed strip $\{z\in \IC:\abs{\Re z}\leq \frac 1 4\}$ and analytic on its interior. It follows from \Cref{lem:appl_residue_thm} that
    \begin{align*}
        \int_\IR(\langle \Delta_\phi^{1/4+it}\xi,T\Delta_\phi^{-1/4+it}\eta\rangle+\langle \Delta_\phi^{-1/4+it}\xi,T\Delta_\phi^{1/4+it}\eta\rangle)\frac{2\,dt}{\cosh 2\pi t}=f(0)=\langle \xi,T\eta\rangle.
    \end{align*}
    Therefore, $R=\check T$ by \cite[Lemma 3.4]{VW23}.
\end{proof}

\begin{theorem}\label{thm:int_rep_form}
    Let $\mathcal E$ be a bounded quantum Dirichlet form on $L^2(M)$ with respect to $\Omega_\phi$. There exists a self-dual $M$-$M$ correspondence $(\tilde \cH,\tilde\cJ)$ and a vector $\xi\in\tilde\cH$ such that
        \begin{itemize}
            \item $\tilde\cJ\xi=\xi$,
            \item $\mathcal E(a)=\int_{\mathbb R}\norm{\pi_l(\Delta_\phi^{it}a)\xi-\xi\cdot J\pi_r(\Delta_\phi^{it}a)^\ast J}_{\mathcal H}^2\frac{2\,dt}{\cosh 2\pi t}$ for all $a\in M\Omega_\phi\cap M^\prime\Omega_\phi$.
        \end{itemize}
    Moreover, the map
    \begin{equation*}
        \tilde\delta\colon M\Omega_\phi\cap M^\prime \Omega_\phi\to \tilde\cH,\,a\mapsto \pi_l(a)\xi-\xi\cdot J\pi_r(a)^\ast J
    \end{equation*}
    is closable and
    \begin{equation*}
        \norm{\tilde\delta(a)}_{\tilde\cH}^2=\Re\mathcal E(\Delta_\phi^{1/4}a,\Delta_\phi^{-1/4}a)\leq \mathcal E(\Delta_\phi^{1/4}a)^{1/2}\mathcal E(\Delta_\phi^{-1/4}a)^{1/2}.
    \end{equation*}
\end{theorem}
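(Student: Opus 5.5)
Since $\mathcal E$ is bounded, $\mathcal E=Q_L$ for a bounded positive operator $L$, and $T_t=e^{-tL}$ is a uniformly continuous semigroup of symmetric, completely positivity-preserving operators fixing $\Omega_\phi$. By the KMS correspondence recalled in the preliminaries, $T_t=\Phi_t^{(2)}$ for a uniformly continuous KMS-symmetric quantum Markov semigroup $(\Phi_t)$ on $M$ with bounded generator $\cL$. I would first identify $\mathcal E$ on the dense set $M\Omega_\phi\cap M'\Omega_\phi$ in terms of $\cL$. The plan is to write $\cL(x)=k^\ast x+xk-\Psi(x)$ by Christensen--Evans, or better to use the GNS/Stinespring-type dilation of the conditionally completely negative map $\cL$. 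This produces an $M$-$M$ correspondence $\cH$ and a vector $\xi$ such that
\[
\cL(x^\ast y)-\cL(x^\ast)y-x^\ast\cL(y)=-\langle x\xi-\xi x,\; y\xi-\xi y\rangle_M
\]
in the sense of the bimodule inner product (the standard cocycle/derivation construction from \cite{Sau89,CS03b}). KMS symmetry ($j_\phi\circ\Phi_t=\Phi_{t\ast}\circ j_\phi$) supplies an anti-unitary $\tilde\cJ$ with $\tilde\cJ\xi=\xi$, and we set $\tilde\cH=\cH$. The construction of $\tilde\cJ$ should work by symmetrizing the correspondence with its contragredient, and I would take $\tilde\cH=\cH\oplus\overline{\cH}$ if necessary.

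Next I would compute $\mathcal E(a)=\langle a,La\rangle$ for $a=x\Omega_\phi=y'\Omega_\phi$ via $i_\phi$. Here $L\Delta_\phi^{1/4}x\Omega_\phi=\Delta_\phi^{1/4}\cL(x)\Omega_\phi$. Since $L$ is a bounded operator implementing $\cL$ in the KMS embedding, I would use the $\mathcal V$-transform characterization (\cite[Lemma 3.4]{VW23}) to write $L=\check T$. Here $T$ is the operator whose form is $\langle a,Ta\rangle=\norm{\pi_l(a)\xi-\xi\cdot J\pi_r(a)^\ast J}^2$, which comes from the derivation identity above and the conservativity $\cL(1)=0$. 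More precisely, I expect the symmetric expression
\[
\tfrac12\langle\Delta_\phi^{1/4}a,L\Delta_\phi^{-1/4}a\rangle+\tfrac12\langle\Delta_\phi^{-1/4}a,L\Delta_\phi^{1/4}a\rangle
\]
to equal $\norm{\tilde\delta(a)}^2$ for $a$ in the bi-algebra. This is the step where the algebra of moving $x$ and $J y'^\ast J$ across $\Delta_\phi^{\pm1/4}$ takes place, and I expect it to be the main obstacle: verifying it requires writing $\Delta_\phi^{1/4}x\Omega_\phi$ and $\Delta_\phi^{-1/4}x\Omega_\phi$ via the $i_\phi$ embedding and checking that the mixed terms recombine into $\langle x\xi-\xi\,\cdot,\ x\xi-\xi\,\cdot\rangle$ with left action by $x$ and right action by $Jy'^\ast J$. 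Once this identity holds, polarization and the realness of $\mathcal E$ give
\[
\norm{\tilde\delta(a)}^2=\Re\mathcal E(\Delta_\phi^{1/4}a,\Delta_\phi^{-1/4}a),
\]
and Cauchy--Schwarz for the positive form $\mathcal E$ gives the stated inequality.

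The integral formula then follows from the new integral formula for the $\mathcal V$-transform proven just before: $\langle a,\check Ta\rangle=\int\langle\Delta_\phi^{it}a,T\Delta_\phi^{it}a\rangle\frac{2\,dt}{\cosh2\pi t}$. The set $M\Omega_\phi\cap M'\Omega_\phi$ is $\Delta_\phi^{it}$-invariant (since $\sigma^\phi_t$ preserves $M$ and $M'$), so $\langle\Delta_\phi^{it}a,T\Delta_\phi^{it}a\rangle=\norm{\tilde\delta(\Delta_\phi^{it}a)}^2$. Finally, closability of $\tilde\delta$ follows from the integral formula together with boundedness: if $a_n\to0$ and $\tilde\delta(a_n)\to\eta$, then Fatou-type lower semicontinuity is not immediate at $t=0$. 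Instead I would show $\tilde\delta^\ast$ is densely defined by computing $\langle\tilde\delta(a),x\xi y\rangle$ for $x,y$ analytic as a bounded functional in $a$, using the bimodule structure and $\tilde\cJ\xi=\xi$.
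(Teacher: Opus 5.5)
\textbf{Main gap: the integral formula.} You want to write $L=\check T$ with $\langle a,Ta\rangle=\norm{\tilde\delta(a)}^2$, and then use \cite[Lemma 3.4]{VW23} together with the integral formula for the $\cV$-transform. Both results are statements about $T\in B(L^2(M))$, and you have no such $T$.

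What you actually have is the form of the positive symmetric operator $S=\frac12(\Delta_\phi^{1/4}L\Delta_\phi^{-1/4}+\Delta_\phi^{-1/4}L\Delta_\phi^{1/4})$ on $M\Omega_\phi\cap M'\Omega_\phi$. You have no bound $\norm{\tilde\delta(a)}\le C\norm{a}$. In your Stinespring picture, $\norm{\tilde\delta(a)}^2$ involves $\phi(\Psi(x^\ast x))$, which $\norm{x\Omega_\phi}^2$ does not control. The theorem itself only asserts closability of $\tilde\delta$, as your last paragraph implicitly concedes. So $\check T$ is undefined and the formula cannot be quoted.

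The remedy is to use boundedness of $L$, not of $T$. Apply \Cref{lem:appl_residue_thm} directly to $f(z)=\mathcal E(\Delta_\phi^{-\bar z}a,\Delta_\phi^{z}a)$ on $\abs{\Re z}\le 1/4$. This $f$ is bounded, continuous, and analytic inside, because $L$ is bounded and $a\in\dom(\Delta_\phi^{1/2})\cap\dom(\Delta_\phi^{-1/2})$. It satisfies $f(0)=\mathcal E(a)$ and $f(-\tfrac14+it)+f(\tfrac14+it)=2\Re\mathcal E(\Delta_\phi^{1/4}\Delta_\phi^{it}a,\Delta_\phi^{-1/4}\Delta_\phi^{it}a)=2\norm{\tilde\delta(\Delta_\phi^{it}a)}^2$.

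Closability likewise needs no computation of $\tilde\delta^\ast$. Since $\norm{\tilde\delta(a)}^2=\langle a,Sa\rangle$ and forms of positive symmetric operators are closable, $\tilde\delta$ is closable.

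\textbf{Completing the two steps you left open.} The rest of your route works, but these two points must be done as follows.

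First, the identity you call the main obstacle is short. Work in the correspondence of the Christensen--Evans part $\Psi$, i.e.\ the completion of $M\odot L^2(M)$ with $\langle x_1\otimes\eta_1,x_2\otimes\eta_2\rangle=\langle\eta_1,\Psi(x_1^\ast x_2)\eta_2\rangle$. Take $\xi_0=1\otimes\Omega_\phi$ and $a=x\Omega_\phi$, and let $\tilde\delta_0$ be the map built from $\xi_0$. Then $\xi_0\cdot J\pi_r(a)^\ast J=1\otimes a$, so
$\norm{\tilde\delta_0(a)}^2=\phi(\Psi(x^\ast x))-2\Re\langle a,\Psi(x)\Omega_\phi\rangle+\langle a,\Psi(1)a\rangle=2\Re\langle a,\cL(x)\Omega_\phi\rangle=2\Re\mathcal E(\Delta_\phi^{1/4}a,\Delta_\phi^{-1/4}a)$.
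This uses $k+k^\ast=\Psi(1)$, $\phi\circ\cL=0$ (invariance of $\phi$, not only $\cL(1)=0$), and $L\Delta_\phi^{1/4}x\Omega_\phi=\Delta_\phi^{1/4}\cL(x)\Omega_\phi$. Note the factor $2$.

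Second, KMS symmetry of $\cL$ does not produce a $\tilde\cJ$ on this correspondence fixing $\xi_0$. That would require $\Psi$ itself to be KMS-symmetric, which is not available in general. So the doubling is necessary, not optional. On $\cH\oplus\overline\cH$, set $\tilde\cJ(\eta,\bar\zeta)=(\zeta,\bar\eta)$ and $\xi=\frac12(\xi_0,\overline{\xi_0})$. The components of $\tilde\delta(a)$ are then $\frac12\tilde\delta_0(a)$ and $-\frac12\overline{\tilde\delta_0(Ja)}$. Since $JL=LJ$ gives $\mathcal E(\Delta_\phi^{1/4}Ja,\Delta_\phi^{-1/4}Ja)=\mathcal E(\Delta_\phi^{1/4}a,\Delta_\phi^{-1/4}a)$, this yields exactly $\norm{\tilde\delta(a)}^2=\Re\mathcal E(\Delta_\phi^{1/4}a,\Delta_\phi^{-1/4}a)$.

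\textbf{Comparison with the paper.} With these repairs your construction is a genuine alternative to the paper's. The paper instead takes $(\tilde\cH,\xi)$ to be the GNS pair of a positive extension of $\omega(x\otimes y')=-\frac12\langle\Omega_\phi,\cL(x)y'\Omega_\phi\rangle$ from $I\cap I^\ast\subset M\binotimes M'$. That extension is symmetrized under the flip $\alpha$ using KMS symmetry, so $\tilde\cJ$ comes for free.
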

\begin{proof}
    Let $L$ be the (bounded) positive self-adjoint operator such that $\mathcal E=Q_L$ and let $(\Phi_t)_{t\geq 0}$ be the associated uniformly continuous KMS-symmetric quantum Markov semigroup with generator $\mathcal L$. As a consequence of the Christensen--Evans theorem (see \cite[Proposition 4.1]{VW23} for the details), the operator $\mathcal L$ is the difference of two normal completely positive maps.

    Define
    \begin{equation*}
        \omega\colon M\odot M^\prime\to\IC,\,x\otimes y^\prime=-\frac 1 2\langle\Omega_M,\mathcal L(x)y^\prime\Omega_M\rangle
    \end{equation*}
    Since $\mathcal L$ is the difference of two normal completely positive maps, $\omega$ extends to a bounded linear functional on the binormal tensor product $M\binotimes M^\prime$ (see \cite[Section 2]{EL77a}), still denoted by $\omega$.
    
    Let 
    \begin{equation*}
        I_0=\left\{\sum_{j=1}^n x_j\otimes y_j^\prime\right\vert\left.x_1,\dots,x_n\in M,y_1,\dots,y_n\in M^\prime,\sum_{j=1}^n x_j y_j^\prime \Omega=0\right\}
    \end{equation*}
    and $I=\overline I_0\subset M\binotimes M^\prime$, $B=I\cap I^\ast$.
    
    Note that $I_0\subset M\odot M^\prime$ is a left ideal and hence $I\subset M\binotimes M^\prime$ is a closed left ideal. Moreover, $\omega|_{B}$ is positive since
    \begin{align*}
        \omega\left(\sum_{j,k=1}^n x_j^\ast x_k\otimes (y_j^\prime)^\ast y_k^\prime\right)&=\lim_{t\to 0}\frac 1 t\sum_{j,k=1}^n \langle y_j^\prime\Omega,\Phi_t(x_j^\ast x_k)y_k^\prime\Omega\rangle\geq 0
    \end{align*}
    for $\sum_{j=1}^n x_j\otimes y_j^\prime\in I_0$.

    Let
    \begin{equation*}
        \alpha\colon M\odot M^\prime\to M\odot M^\prime,\,x\otimes y^\prime\mapsto J y^\prime J\otimes J x J.
    \end{equation*}
    The map $\alpha$ is a anti-linear, unital, multiplicative and self-adjoint ($\alpha(v)^\ast=\alpha(v^\ast)$) and satisfies $\alpha^2=\mathrm{id}$. Note that if $\psi\colon M\odot M^\prime\to\IC$ is a binormal state, then $v\mapsto \overline{\psi(\alpha(v))}$ is again a binormal state. From this fact it follows easily that $\alpha$ extends to an isometric involution on $M\binotimes M^\prime$.
    
    Moreover, as a consequence of KMS symmetry,
    \begin{align*}
        \omega(\alpha(x\otimes y^\prime))
        &=\omega(J y^\prime J\otimes J xJ)\\
        &=-\frac 1 2\langle \Omega_\phi,\mathcal L(J y^\prime J)J xJ\Omega_\phi\rangle\\
        &=-\frac 1 2\langle J\Omega_\phi,J y^\prime \mathcal L(x)\Omega_\phi\rangle\\
        &=-\frac 1 2 \langle y^\prime \mathcal L(x)\Omega_\phi,\Omega\phi\rangle\\
        &=\overline{\omega(x\otimes y^\prime)}.
    \end{align*}

    The functional $\omega|_B$ can be (uniquely) extended to a positive linear functional on $B+\IC 1$ and then by Hahn--Banach to a positive linear functional $\tilde\omega$ on $M\binotimes M^\prime$. A direct computation shows $\tilde\omega(\alpha(v))=\overline{\omega(v)}$ for $v\in B+\IC 1$. Thus, otherwise replacing $\tilde \omega$ by the functional $\frac  12(\tilde\omega+\overline{\tilde\omega(\alpha(\cdot))})$, we may assume that $\tilde \omega\circ\alpha=\overline{\tilde \omega(\cdot)}$.

    Let $(\rho,\tilde\cH,\xi)$ be the cyclic representation induced by $\tilde\omega$. The Hilbert space $\tilde\cH$ becomes an $M$-$M$ correspondence with the left and right action defined by $x\eta y=\rho(x\otimes Jy^\ast J)\eta$.
    
    For $v,w\in M\binotimes M^\prime$ we have
    \begin{equation*}
        \langle \alpha(v)\xi,\alpha(w)\xi\rangle=\tilde\omega(\alpha(v^\ast w))=\overline{\tilde\omega(v^\ast w)}=\overline{\langle v\xi,w\xi\rangle}.
    \end{equation*}
    Thus there exists a unique anti-unitary involution $\tilde\cJ$ on $\tilde\cH$ such that $\tilde\cJ(v\xi)=\alpha(v)\xi$ for all $v\in M\binotimes M^\prime$. A direct computation shows $\tilde\cJ(x\eta y)=y^\ast(\tilde\cJ \eta)x^\ast$ for $x,y\in M$ and $\eta\in (M\binotimes M^\prime)\xi$, and this identity extends to arbitrary $\eta\in \tilde\cH$ by continuity. Furthermore, $\tilde\cJ\xi=\alpha(1)\xi=\xi$.

    If $a\in M\Omega_\phi\cap M^\prime \Omega_\phi$, then $\pi_l(a)\otimes 1-1\otimes \pi_r(a)\in I$, hence
    \begin{align*}
        \norm{\pi_l(a)\xi-\xi\cdot J\pi_r(a)^\ast J}_{\tilde\cH}^2&=\omega(\abs{\pi_l(a)\otimes 1-1\otimes \pi_r(a)}^2)\\
        &=\frac  1 2\langle \Omega_\phi,\mathcal L(\pi_l(a))^\ast \pi_r(a)\Omega_\phi\rangle+\frac1  2\langle \Omega_\phi,\mathcal L(\pi_l(a))\pi_r(a)^\ast\Omega_\phi\rangle\\
        &=\frac 1 2\langle \Delta_\phi^{1/4}\mathcal L(\pi_l(a))\Omega_\phi,\Delta_\phi^{-1/4}a\rangle+\frac 1 2\langle \Delta_\phi^{-1/4}a,\Delta_\phi^{1/4}\mathcal L(\pi_l(a))\Omega_\phi\rangle\\
        &=\frac 1 2\mathcal E(\Delta_\phi^{1/4}a,\Delta_\phi^{-1/4}a)+\frac 1 2\mathcal E(\Delta_\phi^{-1/4}a,\Delta_\phi^{1/4}a).
    \end{align*}
    Note that the manipulation with the modular operator are justified as $M\Omega_\phi\subset \dom(\Delta_\phi^{1/4})$ and $M^\prime\Omega_\phi\subset\dom(\Delta_\phi^{-1/4})$. In particular, 
    \begin{equation*}
        M\Omega_\phi\cap M^\prime\Omega_\phi\subset \dom(\Delta_\phi^{1/4}L\Delta_\phi^{-1/4})\cap \dom(\Delta_\phi^{-1/4}L\Delta_\phi^{1/4}),
    \end{equation*}
    which implies that the quadratic form $Q$ with $\dom(Q)=M\Omega_\phi\cap M^\prime\Omega_\phi$ defined by
    \begin{equation*}
        Q(a)= \frac 1 2\mathcal E(\Delta_\phi^{1/4}a,\Delta_\phi^{-1/4}a)+\frac 1 2\mathcal E(\Delta_\phi^{-1/4}a,\Delta_\phi^{1/4}a)=\langle a,\frac 1 2(\Delta_\phi^{1/4}L\Delta_\phi^{-1/4}+\Delta_\phi^{-1/4}L \Delta_\phi^{1/4})a\rangle
    \end{equation*}
    is closable \cite[Lemma 10.16]{Sch12}. Thus $\tilde\delta$ as defined in the statement of the theorem is closable. Moreover, the bound $\norm{\tilde\delta(a)}_{\tilde\cH}^2\leq \mathcal E(\Delta_\phi^{1/4}a)^{1/2}\mathcal E(\Delta_\phi^{-1/4}a)^{1/2}$ follows from the equation above by Cauchy--Schwarz.
    
    Let $S=\{z\in\IC\mid -1/4<\Re z<1/4\}$ and
    \begin{align*}
        f\colon\overline S\to\IC,\,z\mapsto \mathcal E(\Delta_\phi^{-\bar z}a,\Delta_\phi^{z} a).
    \end{align*}
    Since $a\in\dom(\Delta_\phi^{1/2})\cap\dom(\Delta_\phi^{-1/2})$ and $\mathcal E$ is bounded, the function $f$ is continuous and bounded on $\overline S$ and analytic on $S$ by \cite[Lemma VI.2.3]{Tak03}. It satisfies $f(0)=\langle a,La\rangle=\mathcal E(a)$ and
    \begin{align*}
        f(-1/4+it)+f(1/4+it)&=\mathcal E(\Delta_\phi^{1/4+it}a,\Delta_\phi^{-1/4+it}a)+\mathcal E(\Delta_\phi^{-1/4+it}a,\Delta_\phi^{1/4+it}a)\\
        &=2\norm{\pi_l(\Delta_\phi^{it}a)\xi-\xi\cdot J\pi_r(\Delta_\phi^{it}a)^\ast J}^2
    \end{align*}
    by the previous computation.

    Hence we can apply \Cref{lem:appl_residue_thm} to get
    \begin{align*}
        \mathcal E(a)&=f(0)\\
        &=\int_{\mathbb R}(f(-1/4+it)+f(1/4+it))\frac{dt}{\cosh 2\pi t}\\
        &=\int_{\mathbb R}\norm{\pi_l(\Delta_\phi^{it}a)\xi-\xi\cdot J\pi_r(\Delta_\phi^{it}a)^\ast J}^2\frac{2\,dt}{\cosh 2\pi t}.\qedhere
    \end{align*}
\end{proof}

\begin{remark}
    This result and its proof are very close to \cite[Theorem 2.3]{Ko07}. The differences are that for one we show that the derivation $\delta$ is in fact inner in a suitable sense, and, more importantly, that we focus on the Hilbert space implementation of the derivation instead of its implementation on the level of the underlying von Neumann algebra. This formulation lends itself much better to the application to unbounded quantum Dirichlet forms later (compare with \cite{Ko09}, where Ko also obtained a representation result for unbounded quantum Dirichlet forms, but only under strong assumptions on the generator).
\end{remark}

\begin{remark}
    For specific bounded quantum Dirichlet forms, for which the generator of the associated quantum Markov semigroup takes the form $\cL=1-\Psi$ with a KMS-symmetric ucp map $\Psi$, a form of this representation theorem was proven in \cite{Wir25} and used to establish the triangle inequality for a quantum Wasserstein distance. In this special case, the vector $\xi\in \tilde\cH$ is always a bounded vector so that the quantum Dirichlet form takes the form
    \begin{equation*}
        \mathcal E(a)=\int_{\IR}\norm{(L_\phi(\xi)-R_\phi(\xi))\Delta_\phi^{it}a}_{\tilde\cH}^2\frac{2\,dt}{\cosh 2\pi t},\quad a\in L^2(M).
    \end{equation*}
\end{remark}

Conversely, if $\tilde\cH$, $\tilde\cJ$, $\xi$ are as in the statement of the theorem and the quadratic form $\mathcal E_0$ with domain $M\Omega_\phi\cap M^\prime\Omega_\phi$ given by
\begin{equation*}
    \mathcal E_0(a)=\int_\IR\norm{\pi_l(\Delta_\phi^{it}a)\xi-\xi\cdot J\pi_r(\Delta_\phi^{it}a)^\ast J}^2\frac{2\,dt}{\cosh 2\pi t},\quad a\in M\Omega_\phi\cap M^\prime\Omega_\phi
\end{equation*}
is bounded, then its closure is a quantum Dirichlet form. This will follow from \Cref{thm:char_bdd} below. However, in the light of the results of Park \cite{Par00}, the density $\frac{2}{\cosh 2\pi t}$ is not the only possible choice, and as far as we can say, it is not a canonical choice. Moreover, we do not have any direct criterion that allows to check boundedness of $\mathcal E_0$ in terms of $(\tilde\cH,\tilde\cJ,\xi)$. For this reason we chose to present the characterization of quantum Dirichlet forms in a slightly different, more abstract way.

First, we need a result on the translation group on the weighted $L^2$ space with weight $\frac{2}{\cosh 2\pi t}$. For the rest of the section, let $\mu$ be the measure on $\IR$ with density $\frac{d\mu}{dt}=\frac{2}{\cosh 2\pi t}$.

\begin{proposition}\label{prop:trans_weighted_L2}
    Let $H$ be a Hilbert space. The family $(\mathcal V_s)_{s\in \IR}$ given by 
    \begin{equation*}
        \mathcal V_s\colon L^2(\IR,\mu;H)\to L^2(\IR,\mu;H),\,(\mathcal V_s \xi)(t)=\xi(s+t)
    \end{equation*}
    for $s\in\IR$ is a strongly continuous group, $\norm{\cV_s}\leq e^{\pi\abs{s}}$ for all $s\in\IR$ and $\cV_{-i/2}$ is accretive.
\end{proposition}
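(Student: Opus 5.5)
We split the proof into three parts: the norm bound, strong continuity, and accretivity of $\cV_{-i/2}$.

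\emph{Norm bound.} Write $w(t)=\frac{2}{\cosh 2\pi t}$. The substitution $t\mapsto t-s$ gives
\begin{equation*}
\norm{\cV_s\xi}^2=\int_\IR\norm{\xi(t)}^2 w(t-s)\,dt .
\end{equation*}
Since $\cosh(2\pi t)\le e^{2\pi\abs s}\cosh(2\pi(t-s))$, we have $w(t-s)\le e^{2\pi\abs s}w(t)$. Hence $\norm{\cV_s}\le e^{\pi\abs s}$.

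\emph{Group and strong continuity.} The group law is clear. On the dense subspace $C_c(\IR;H)$, $\cV_s\xi\to\xi$ by dominated convergence. The locally uniform bound on $\norm{\cV_s}$ then gives strong continuity on all of $L^2(\IR,\mu;H)$ by a $3\epsilon$-argument.

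\emph{Accretivity: identifying $\cV_{-i/2}$.} We use the unitary $U\colon L^2(\IR,\mu;H)\to L^2(\IR;H)$, $(U\xi)(t)=w(t)^{1/2}\xi(t)$. Under $U$, $\cV_s$ becomes $T_s$ with
\begin{equation*}
(T_s\eta)(t)=\sqrt{w(t)/w(t+s)}\,\eta(t+s).
\end{equation*}
It is cleanest to work with a dense, $\cV$-invariant core and invoke \Cref{lem:core_analytic_generator}. Take $D$ to be the span of the functions $t\mapsto e^{-n(t-c)^2}h$ with $n\in\IN$, $c\in\IC$, $h\in H$, which are entire with Gaussian decay on horizontal strips. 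For $\xi\in D$, $s\mapsto\cV_s\xi=\xi(\cdot+s)$ extends analytically, and $\cV_{-i/2}\xi=\xi(\cdot-i/2)$, with the growth bound $e^{\pi\abs s}$ giving membership in $\dom(\cV_{-i/2})$. Since $D$ is a core, it suffices to show $\Re\langle\xi,\xi(\cdot-i/2)\rangle_\mu\ge0$ for $\xi\in D$; accretivity of the closure follows by continuity in the graph norm.

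\emph{Accretivity: the computation (main obstacle).} For $\xi\in D$ we must show
\begin{equation*}
\Re\int_\IR\langle\xi(t),\xi(t-i/2)\rangle\,\frac{2\,dt}{\cosh2\pi t}\ge0 .
\end{equation*}
The plan is a contour shift by $i/4$. Set $g(z)=\xi(z+i/4)$ and $\tilde g(z)=\overline{\xi(\bar z+i/4)}$, so that $\tilde g$ is analytic with $\tilde g(t)=\overline{g(t)}$ for real $t$. Then $\langle\xi(t),\xi(t-i/2)\rangle=F(t-i/4)$ with
\begin{equation*}
F(z)=\langle \tilde g(z-i/2),\,g(z)\rangle ,
\end{equation*}
interpreted bilinearly so that $F$ is analytic; on the real line $F(t)=\norm{g(t)}^2$. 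The weight $\frac{1}{\cosh 2\pi z}$ has poles at $z=\pm i/4$, which lie exactly on the contour when shifting by $i/4$. This is the delicate step. I would handle it by shifting to $\Im z=-1/4+\epsilon$ and letting $\epsilon\to0$, so that the pole produces a principal value plus a half-residue. Since $\cosh(2\pi(t-i/4))=-i\sinh 2\pi t$, the principal value integral is purely imaginary, up to conjugation symmetry. The residue term equals a positive multiple of $F(0)=\norm{\xi(-i/4)}^2\ge0$. Taking real parts leaves
\begin{equation*}
\Re\langle\xi,\cV_{-i/2}\xi\rangle=c\,\norm{\xi(-i/4)}^2
\end{equation*}
with $c>0$ ($c=1$ up to normalization). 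An equivalent route, useful as a cross-check, is Fourier transform: $\cV_{-i/2}$ acts as multiplication by $e^{\lambda/2}$, and $\hat w(\lambda)\propto\sech(\lambda/4)$. The claim then reduces to positive definiteness of the kernel $(e^{\lambda/2}+e^{\lambda'/2})\sech((\lambda-\lambda')/4)$, which follows from $\cosh((\lambda+\lambda')/4)/\cosh((\lambda-\lambda')/4)$ being positive definite after factoring out $e^{\lambda/4}e^{\lambda'/4}$.

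Getting the signs and the half-residue right in the contour computation is where I expect the real work to lie.
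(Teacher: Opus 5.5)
Your argument is correct, and its core is the same identity the paper uses: the residue formula for the weight $\frac{2}{\cosh 2\pi t}$ on a strip of width $1/2$ turns $\Re\langle\xi,\cV_{-i/2}\xi\rangle$ into the squared norm of $\cV_{-i/4}\xi$ evaluated at $0$. The difference is in how regularity is handled. The paper takes an arbitrary $\xi\in\dom(\cV_{-i/2})$. Since elements of $L^2(\IR,\mu;H)$ cannot be evaluated pointwise, it applies \Cref{lem:appl_residue_thm} to the mollified function $h_\epsilon$ and removes the mollifier by dominated convergence, which yields only the inequality. You restrict to the Gaussian core $D$ and invoke \Cref{lem:core_analytic_generator}. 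This permits pointwise evaluation and gives the exact identity $\Re\langle\xi,\cV_{-i/2}\xi\rangle=\norm{\xi(-i/4)}^2$ on $D$, which is more transparent. The price is checking that $D$ is dense (translates of a Gaussian are dense in $L^2(\IR)$, and $w\le 2$) and that $D\subset\dom(\cV_{-i/2})$. The latter follows from Gaussian decay of $\xi(\cdot+z)$, uniform on the strip $-1/2\le\Im z\le 0$, not from the bound $\norm{\cV_s}\le e^{\pi\abs s}$ you cite.

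There are two slips to fix. First, as written, $g(z)=\xi(z+i/4)$ and $F(z)=\langle\tilde g(z-i/2),g(z)\rangle$ give $F(t-i/4)=\langle\xi(t+i),\xi(t)\rangle$, and $F(t)\neq\norm{g(t)}^2$. The right choice is $F(z)=\langle\xi(\bar z-i/4),\xi(z-i/4)\rangle$, which is analytic since the first slot is antilinear. It satisfies $F(t)=\norm{\xi(t-i/4)}^2$, $F(t-i/4)=\langle\xi(t),\xi(t-i/2)\rangle$ and $F(t+i/4)=\overline{F(t-i/4)}$. With this $F$ you need no principal-value or half-residue analysis: applying \Cref{lem:appl_residue_thm} to $f(z)=F(-iz)$, which is bounded on $\abs{\Re z}\le 1/4$ for $\xi\in D$, gives the identity with $c=1$ in one line. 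Second, the factorization in your Fourier cross-check is off. In fact $(e^{\lambda/2}+e^{\lambda'/2})\sech\frac{\lambda-\lambda'}{4}=2e^{\lambda/4}e^{\lambda'/4}$ exactly, a rank-one kernel, which is the same identity in Fourier form.
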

\begin{proof}
    Since $\frac{\cosh 2\pi t}{\cosh 2\pi(t-s)}\in [e^{-2\pi\abs{s}},e^{2\pi\abs{s}}]$ for all $s,t\in\IR$, we have
    \begin{align*}
        \norm{\cV_s\xi}^2=\int_\IR \norm{\xi(t)}^2\frac{\cosh 2\pi t}{\cosh{2\pi(t-s)}}\frac{2\,dt}{\cosh 2\pi t}\leq e^{2\pi\abs{s}}\norm{\xi}^2
    \end{align*}
    for all $\xi\in L^2(\IR,\mu;H)$. For strong continuity, it is therefore enough to prove $\norm{\cV_s \xi-\xi}_2\to 0$ for $\xi\in C_c(\IR;H)$, which is obvious.

    It remains to show that $\cV_{-i/2}$ is accretive. Let $S=\{z\in \IC\mid -\frac 1 2\leq \Im z\leq 0\}$. By definition, $\dom(\cV_{-i/2})\subset \dom(\cV_z)$ for all $z\in S$, and if $\xi\in\dom(\cV_{-i/2})$, then $S\to L^2(\IR,\mu;\cH)$, $z\mapsto \cV_z \xi$ is continuous and analytic on the interior of $S$.

    Let $\phi\in C_c^\infty(\IR)$ be positive with $\int_\IR\phi(t)\,dt=1$ and let $\phi_\epsilon(t)=\epsilon^{-1}\phi(t/\epsilon)$ for $\epsilon>0$. Define
    \begin{equation*}
        h_\epsilon\colon \{z\in \IC\mid -1/4\leq \Re z\leq 1/4\}\to \IC,\,z\mapsto\int_{\IR}\phi_\epsilon(t)\langle \cV_{i\bar z-i/4}\xi(t),\cV_{-iz-i/4}\xi(t)\rangle_\cH\,dt.
    \end{equation*}
    Note that $L^1(\IR,\mu)\subset L^1_{\mathrm{loc}}(\IR)$ so that the integral is well-defined. For $s\in\IR$ let $V(s)=-\log\frac{2}{\cosh 2\pi s}$. By \Cref{lem:appl_residue_thm}, we have 
    \begin{align*}
        \int_{\IR}\frac{h_\epsilon(-1/4+is)+h_\epsilon(1/4+is)}{2}e^{-V(s)}\,ds=h_\epsilon(0)=\int_\IR\phi_\epsilon(t)\norm{\cV_{-i/4}\xi(t)}_\cH^2\,dt\geq 0.
    \end{align*}
    On the other hand,
    \begin{align*}
        \int_\IR h_\epsilon(-1/4+is)e^{-V(s)}\,ds
        &=\int_\IR\int_\IR\phi_\epsilon(t)\langle \cV_{s-i/2}\xi(t),\cV_{s}\xi(t)\rangle_\cH e^{-V(s)}\,dt\,ds\\
        &=\int_\IR\int_\IR\phi_\epsilon(t)\langle \cV_{-i/2}\xi(s+t),\xi(s+t)\rangle_\cH e^{-V(s)}\,dt\,ds\\
        &=\int_\IR\int_\IR \langle \cV_{-i/2}\xi(s),\xi(s)\rangle_\cH e^{-V(s)}\int_{\IR}\phi_\epsilon(t) e^{V(s)-V(s-t)}\,dt\,ds.\\
    \end{align*}
    If $\epsilon_n\to 0$, then $\int_{\IR}\phi_{\epsilon_n}(t) e^{V(s)-V(s-t)}\,dt\to 1$ for every $s\in\IR$ by the properties of the mollifier. Moreover, if $\operatorname{supp}\phi\subset [-R,R]$, then $\int_{\IR}\phi_\epsilon(t) e^{V(s)-V(s-t)}\,dt\leq \int_{\IR}\phi_\epsilon(t)e^{2\pi \abs{t}}\,dt\leq e^{2\pi R\epsilon}$. Therefore,
    \begin{align*}
        \int_\IR h_{\epsilon_n}(-1/4+is)e^{-V(s)}\,ds\to \int_\IR \langle \cV_{-i/2}\xi(s),\xi(s)\rangle_\cH e^{-V(s)}\,ds=\langle \cV_{-i/2}\xi,\xi\rangle_\cH
    \end{align*}
    by the dominated convergence theorem.

    An analogous argument shows
    \begin{equation*}
        \int_\IR h_{\epsilon_n}(1/4+is)e^{-V(s)}\,ds\to\langle\xi,\cV_{-i/2}\xi\rangle_\cH.
    \end{equation*}
    Altogether we have shown $\Re\langle \xi,\cV_{-i/2}\xi\rangle\geq 0$.
\end{proof}

\begin{remark}
    It follows from Monniaux's theorem \cite[Theorem 4.3]{Mon99} that $\mathcal V_{-i/2}$ is closed and $\cV_{-i/2}+\lambda$ is surjective for all $\lambda>0$. Thus the operator $\cV_{-i/2}$ is in fact $m$-accretive.
\end{remark}

\begin{lemma}\label{lem:dense_real_part}
The set $\{x^\ast \Omega_\phi x-y\Omega_\phi y^\ast \mid x,y\in M,\,xy=0\}$ is dense in $\ker (1-J)$.
\end{lemma}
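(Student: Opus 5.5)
The plan is to prove the density directly: approximate the positive and negative parts of a $J$-fixed vector separately, and use support projections to force the product condition $xy=0$. Throughout, $x^\ast\Omega_\phi x$ denotes $x^\ast Jx^\ast J\Omega_\phi$ and $y\Omega_\phi y^\ast=yJyJ\Omega_\phi$, in the bimodule notation of the trivial correspondence. Since $x^\ast$ commutes with $Jx^\ast J$, we have $J(x^\ast Jx^\ast J\Omega_\phi)=Jx^\ast J\,x^\ast\Omega_\phi=x^\ast Jx^\ast J\Omega_\phi$. So all elements of the set lie in $\ker(1-J)$, and both terms lie in $L^2_+(M)$ by condition (c) of a standard form.

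Let $\xi\in\ker(1-J)=L^2(M)^J$, and decompose $\xi=\xi_+-\xi_-$ with $\xi_\pm\in L^2_+(M)$ as in the preliminaries. The first step is the orthogonality $\langle\xi_+,\xi_-\rangle=0$. This follows from the variational characterization of $\xi_+$ as the metric projection onto the self-dual cone, which is the standard Moreau decomposition for a self-dual cone.

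The second step, which I expect to be the main obstacle, is to upgrade this orthogonality to orthogonality of supports. Let $p$ and $q$ be the support projections in $M$ of the vector functionals $\omega_{\xi_+}$ and $\omega_{\xi_-}$; the goal is $pq=0$. This is a known fact in Haagerup's theory: for $\zeta,\eta\in L^2_+(M)$ with $\langle\zeta,\eta\rangle=0$, the positive functionals $\omega_\zeta$ and $\omega_\eta$ are orthogonal, equivalently $\zeta$ and $\eta$ have orthogonal supports. One could also prove it via the order structure of the cone: it suffices to show $\langle \zeta, xJxJ\eta\rangle=0$ for $0\le x\le 1$, or argue as in Araki--Haagerup. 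I would cite the corresponding statement from \cite{Haa75}. We also need $p\xi_+=\xi_+$ together with $J\xi_+=\xi_+$, which give $JpJ\xi_+=Jp\xi_+=\xi_+$, hence $pJpJ\xi_+=\xi_+$. Similarly, $qJqJ\xi_-=\xi_-$.

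The third step is the approximation itself. The cone $L^2_+(M)$ is the closure of $\{aJaJ\Omega_\phi\mid a\in M\}$, since $\Omega_\phi$ is cyclic and separating and lies in the cone, so this is the natural cone. Hence we can choose $a_n,b_n\in M$ with $a_nJa_nJ\Omega_\phi\to\xi_+$ and $b_nJb_nJ\Omega_\phi\to\xi_-$. Put
\[
x_n=a_n^\ast p,\qquad y_n=qb_n .
\]
Then $x_ny_n=a_n^\ast pqb_n=0$. Moreover, since $JpJ$ commutes with $a_n$,
\[
x_n^\ast\Omega_\phi x_n=pa_n\,Jpa_nJ\,\Omega_\phi=pJpJ(a_nJa_nJ\Omega_\phi)\to pJpJ\xi_+=\xi_+ .
\]
The analogous computation gives $y_n\Omega_\phi y_n^\ast=qJqJ(b_nJb_nJ\Omega_\phi)\to\xi_-$. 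Consequently $x_n^\ast\Omega_\phi x_n-y_n\Omega_\phi y_n^\ast\to\xi$, which proves the claimed density.
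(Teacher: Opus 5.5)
Your proof is correct and follows essentially the same route as the paper. You take the support projections $p,q$ of $\xi_+,\xi_-$, approximate $\xi_\pm$ from the natural cone $\overline{\{aJaJ\Omega_\phi\mid a\in M\}}$, and compress by $pJpJ$ and $qJqJ$, so that $x_ny_n=a_n^\ast pq b_n=0$; this is exactly the paper's construction with $x_n s(a_+)$ and $s(a_-)y_n$. Like the paper, you take the orthogonality of supports as known. For a self-contained argument, note first that $\langle\xi_+,\xi_-\rangle=0$ gives $\norm{\xi_+-\xi_-}^2=\norm{\omega_{\xi_+}}+\norm{\omega_{\xi_-}}$. The Araki--Powers--St\o rmer inequality $\norm{\xi_+-\xi_-}^2\leq\norm{\omega_{\xi_+}-\omega_{\xi_-}}$ then forces $\norm{\omega_{\xi_+}-\omega_{\xi_-}}=\norm{\omega_{\xi_+}}+\norm{\omega_{\xi_-}}$, and this equality holds only when the two functionals have orthogonal supports.
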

\begin{proof}
    If $a\in \ker(1-J)$, then the support projections $s(a_+)$ and $s(a_-)$ are orthogonal. By \cite[Theorem 1.1]{Haa75} and the uniqueness of standard forms, there exist sequences $(x_n)$, $(y_n)$ in $M$ such that $x_n^\ast \Omega x_n\to a_+$ and $y_n \Omega y_n^\ast\to a_-$. We have $s(a_+)x_n^\ast \Omega x_ns(a_+)\to a_+$, $s(a_-)y_n \Omega y_n^\ast s(a_-)\to a_-$ and $x_n s(a_+)s(a_-)y_n=0$.
\end{proof}

\begin{theorem}\label{thm:char_bdd}
    A linear operator $L$ on $L^2(M)$ is the generator of a quantum Dirichlet form if and only if there exist
    \begin{itemize}
        \item an $M$-$M$ correspondence $\cH$,
        \item an anti-unitary involution $\mathcal J\colon\cH\to\cH$,
        \item a strongly continuous one-parameter group $(\cV_s)_{s\in\IR}$ on $\mathcal H$ with $\cV_{-i/2}$ accretive,
        \item a bounded operator $\delta\colon L^2(M)\to\mathcal H$
    \end{itemize}
    satisfying
    \begin{enumerate}[(i)]
        \item $\cH=\overline{\operatorname{span}}\{x\delta(a)y\mid a\in L^2(M),x,y\in M\}$,
        \item $\mathcal J(x\eta y)=y^\ast (\mathcal J\eta)x^\ast$, $\cV_s(x\eta y)=\sigma^\phi_s(x)(\cV_s\eta)\sigma^\phi_s(y)$ for all $x,y\in M$, $\eta\in\cH$, $s\in\IR$,
        \item $\cV_s\mathcal J=\mathcal J \cV_s$ for all $s\in\IR$,
        \item $\delta\circ J=\mathcal J\circ\delta$, $\delta\circ \Delta_\phi^{is}=\cV_s\circ\delta$,
        \item $\delta(x\Omega_\phi y)=x\delta(\Omega_\phi y)+\delta(x\Omega_\phi)y$ for all $x,y\in M$
    \end{enumerate}
    such that
    \begin{equation*}
        L=\delta^\ast\delta.
    \end{equation*}
    Moreover, if $(\cH^\prime,\cJ^\prime,(\mathcal V_s^\prime)_{s\in\IR},\delta^\prime)$ is a quadruple consisting of a self-dual correspondence $(\cH^\prime,\cJ^\prime)$, a strongly continuous group $(\cV^\prime_s)_{s\in\IR}$ on $\cH^\prime$ and a bounded operator $\delta^\prime\colon L^2(M)\to \cH^\prime$ satisfying (i)--(v) above, then there exists a unique unitary bimodule map $\Theta\colon \cH\to \cH^\prime$ such that $\Theta\circ \cJ=\cJ^\prime\circ\Theta$, $\Theta\circ \cV_s=\cV_s^\prime\circ \Theta$ for all $s\in \IR$ and $\Theta\circ \delta=\delta^\prime$.
    
    The operator $L$ commutes strongly with $\Delta_\phi$ if and only if $\cV_{-i/2}$ is a positive self-adjoint operator.
\end{theorem}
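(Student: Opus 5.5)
For necessity I will build the quadruple from \Cref{thm:int_rep_form}. Given a bounded quantum Dirichlet form $\mathcal E=Q_L$, take the self-dual correspondence $(\tilde\cH,\tilde\cJ)$ and $\xi$ from \Cref{thm:int_rep_form}. On the dense subspace $M\Omega_\phi\cap M'\Omega_\phi$ set
\[
\delta(a)(t)=\pi_l(\Delta_\phi^{it}a)\xi-\xi\cdot J\pi_r(\Delta_\phi^{it}a)^\ast J .
\]
I view this as an element of $L^2(\IR,\mu;\tilde\cH)$. The integral formula of \Cref{thm:int_rep_form} gives $\norm{\delta(a)}^2=\mathcal E(a)\le\norm L\norm a^2$, so $\delta$ extends to a bounded operator $L^2(M)\to L^2(\IR,\mu;\tilde\cH)$ with $\delta^\ast\delta=L$.

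The bimodule structure on $L^2(\IR,\mu;\tilde\cH)$ will be
\[
(x\eta y)(t)=\sigma^\phi_t(x)\eta(t)\sigma^\phi_t(y),\qquad (\cJ\eta)(t)=\tilde\cJ\eta(t).
\]
The group $\cV_s$ is the translation of \Cref{prop:trans_weighted_L2}, which is strongly continuous with $\cV_{-i/2}$ accretive. I then take $\cH$ to be the closed span in (i).

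The remaining properties are checked one at a time.
\begin{itemize}
\item For (ii), translation intertwines $\sigma^\phi_t$ with $\sigma^\phi_{s+t}$.
\item For (iv), $\delta\Delta^{is}_\phi=\cV_s\delta$ is immediate, and $\delta J=\cJ\delta$ follows from $\tilde\cJ\xi=\xi$, $\tilde\cJ(x\eta y)=y^\ast(\tilde\cJ\eta)x^\ast$ and $J\Delta^{it}=\Delta^{it}J$.
\item For (v), the Leibniz rule holds because $\pi_l,\pi_r$ are multiplicative on $\Omega$-products: $a\mapsto \pi_l(a)\xi-\xi\cdot(\dots)$ is an inner derivation, checked on $x\Omega_\phi y$ with $x,y$ analytic and then extended by continuity.
\item $\cH$ is invariant under $\cV_s$ and $\cJ$ by (ii) and (iv), and the restricted group is still strongly continuous. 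Its $\cV_{-i/2}$ is the restriction of the ambient one, hence accretive, since analytic extensions of orbits stay in the closed invariant subspace.
\end{itemize}

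For sufficiency, given the data, set $L=\delta^\ast\delta$. Then $\mathcal E(a)=\norm{\delta a}^2$ is bounded. The vector $\Omega_\phi$ lies in the kernel: (v) with $x=y=1$ gives $\delta(\Omega)=2\delta(\Omega)$. $J$-invariance follows from (iv). The hard part is the Markov property $\mathcal E(a_+)\le\mathcal E(a)$, and its complete version.

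My plan for the hard part is to show that $\Phi_t=e^{-tL}$ is positivity preserving by proving conditional complete negativity of the generator on the algebra level. Concretely, I will show that $\Re\langle\delta(b),\delta(c)\rangle\le0$ whenever $b\in L^2_+$, $c\in L^2_+$ have orthogonal supports. Via \Cref{lem:dense_real_part}, it suffices to have $\Re\langle \delta(x^\ast\Omega x),\delta(y\Omega y^\ast)\rangle\le 0$ when $xy=0$.

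I would expand this with (v). Using $xy=0$, what remains should be of the form $-\Re\langle x^\ast\delta(\Omega x)\dots\rangle$, a pairing of type $\langle\eta, \cV_{-i/2}\eta\rangle$. This comes from rewriting $J\Omega x$ terms as $\Delta^{1/2}$-twisted ones using (iv), namely $\cJ\delta=\delta J$ and $\delta\Delta^{1/2}=\cV_{-i/2}\delta$ on analytic vectors. Accretivity then gives the sign. This computation is the main obstacle: getting the correct analytic continuation and the correct vector $\eta$, likely $\eta=\delta(\Omega x)y'$-type terms, requires working with entire analytic $x,y$ and a density argument.

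The positivity criterion (Beurling–Deny type: a bounded symmetric $L$ with $L\Omega=0$, $JL=LJ$ and $\langle b,Lc\rangle\le0$ for orthogonal positive $b,c$ gives a Markovian $e^{-tL}$) then yields the Dirichlet property. Applying the same argument to the amplification $\delta\otimes\id$ on $\cH\otimes S^2_n$, which satisfies the same axioms over $M\otimes M_n$, gives complete Dirichletness.

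For uniqueness, I define $\Theta(x\delta(a)y)=x\delta'(a)y$. This map is well defined and isometric once I show that $\langle x_1\delta(a_1)y_1,x_2\delta(a_2)y_2\rangle$ is determined by $L$. Using (v), I move $x,y$ inside $\delta$ to reduce to $\langle\delta(b),\delta(c)\rangle=\langle b,Lc\rangle$ plus terms $\langle\delta(b),x\delta(\Omega)\dots\rangle$, which vanish since $\delta(\Omega)=0$. The identity $\langle \delta(a_1), x\delta(a_2) y\rangle$ expressed via $\delta(xa_2y)-\delta(x\cdot)\dots$ handles this. Density from (i) gives unitarity, and intertwining with $\cJ$ and $\cV_s$ follows from (iii) and (iv).

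For the last claim: if $L$ commutes with $\Delta_\phi$, then $\norm{\delta\Delta^{is}a}=\norm{\delta a}$, so $\cV_s$ is unitary on $\cH$ (using (i) and (ii)). Its analytic generator is then positive self-adjoint. Conversely, if $\cV_{-i/2}$ is positive self-adjoint, then $\cV_s$ is unitary, and $\Delta^{-is}L\Delta^{is}=\delta^\ast\cV_s^\ast\cV_s\delta=L$.
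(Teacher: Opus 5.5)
Your overall plan is the paper's, and your amplification argument for complete Dirichletness is fine (the paper leaves it implicit). Necessity goes through \Cref{thm:int_rep_form} and the translation group of \Cref{prop:trans_weighted_L2}. Sufficiency goes through $\mathcal E(a_+,a_-)\le 0$ on the dense set of \Cref{lem:dense_real_part}, analytic approximants, and accretivity.

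\textbf{The sufficiency computation is missing.} The step that carries this direction is left open, and your guess for it would not close. After expanding with (v), the terms $\langle x^\ast\delta(\Omega_\phi x),y\delta(\Omega_\phi y^\ast)\rangle$ and $\langle \delta(x^\ast\Omega_\phi)x,\delta(y\Omega_\phi)y^\ast\rangle$ vanish because $xy=0$. The map $\cJ$ makes the two cross terms complex conjugates, so
\[
\mathcal E(a_+,a_-)=2\Re\langle \delta(\Omega_\phi x)y,\,x\delta(y\Omega_\phi)\rangle .
\]
The two vectors here are different, so $\cJ$ and $\delta\Delta_\phi^{1/2}=\cV_{-i/2}\delta$ alone do not produce a pairing $\langle\cV_{-i/2}\eta,\eta\rangle$. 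The missing idea is a second application of the Leibniz rule. Take bounded entire analytic $x_n,y_n$ converging strong$^\ast$ to $x,y$. Then $\Omega_\phi x_ny_n=\sigma^\phi_{-i/2}(x_n)\Omega_\phi y_n$, and (v) gives
\[
\delta(\Omega_\phi x_n)y_n=\delta(\Omega_\phi x_ny_n)-\sigma^\phi_{-i/2}(x_n)\delta(\Omega_\phi y_n)=\delta(\Omega_\phi x_ny_n)-\cV_{-i/2}\bigl(x_n\delta(y_n\Omega_\phi)\bigr).
\]
The last equality is the analytic continuation of (ii) and (iv). Since $x_ny_n\to 0$, the term $\delta(\Omega_\phi x_ny_n)$ tends to $0$. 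Hence $\mathcal E(a_+,a_-)=-2\lim_n\Re\langle\cV_{-i/2}\eta_n,\eta_n\rangle\le 0$ with $\eta_n=x_n\delta(y_n\Omega_\phi)$, not a vector of type $\delta(\Omega_\phi x)y$. So $xy=0$ is used twice, and the approximants only need $x_ny_n\to 0$.

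\textbf{The uniqueness argument does not work as stated.} Moving $x,y$ inside $\delta$ with (v) never produces terms containing $\delta(\Omega_\phi)$. For example, $x\delta(\Omega_\phi y)=\delta(x\Omega_\phi y)-\delta(x\Omega_\phi)y$ leaves a right action on the nonzero vector $\delta(x\Omega_\phi)$. Iterating (v) only regenerates terms of the same kind. To show that $\langle x_1\delta(a_1)y_1,x_2\delta(a_2)y_2\rangle$ is determined by $L$, you must close this recursion using the $\cJ$-symmetry. In the tracial case this gives the Cipriani--Sauvageot formula, which expresses $\langle\delta(a)b,\delta(c)d\rangle$ through three values of $\mathcal E$; here a modular-twisted version is needed, and the paper imports it from \cite[Theorem 4.7]{VW23}.

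The same problem affects your last part. Isometry of $\cV_s$ on $\operatorname{ran}\delta$ together with (i) and (ii) does not by itself give unitarity on $\cH$; the paper deduces it from uniqueness.

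A minor point: with $\tilde\cJ\xi=\xi$ one actually gets $\tilde\cJ(\delta(a)(t))=-\delta(Ja)(t)$, so take $(\cJ\eta)(t)=-\tilde\cJ\eta(t)$.
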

\begin{proof}
    First assume that $L$ generates a bounded conservative completely Dirichlet form. By \Cref{thm:int_rep_form} there exists an $M$-$M$ correspondence $\tilde\cH$, an anti-unitary involution $\tilde\cJ\colon \tilde\cH\to \tilde\cH$ and a vector $\xi\in \tilde\cH$ such that $\tilde \cJ(x\xi y)=y^\ast \xi x^\ast$ for all $x,y\in M$ and
    \begin{equation*}
        \mathcal E(a)=\int_{\IR}\norm{\pi_l(\Delta_\phi^{it}a)\xi-\xi\cdot J\pi_r(\Delta_\phi^{it}a)^\ast J}_{\tilde\cH}^2\frac{2\,dt}{\cosh 2\pi t},\quad a\in M\Omega\cap M^\prime\Omega.
    \end{equation*}
    We may assume without loss of generality that $\tilde \cH$ is the closed bimodule generated by $\xi$.

    On $L^2(\IR,\mu;\tilde\cH)$ define a bimodule structure by $(x\eta y)(t)=\sigma^\phi_t(x)\eta(t)\sigma^\phi_t(y)$ for $x,y\in M$ and $\eta\in L^2(\IR,\mu;\tilde\cH)$. Let 
    \begin{equation*}
        \delta\colon M\Omega\cap M^\prime\Omega\to L^2(\IR,\mu;\tilde\cH),\,\delta(a)(t)=\pi_l(\Delta_\phi^{it}a)\xi-\xi\cdot J\pi_r(\Delta_\phi^{it}a)^\ast J
    \end{equation*}
    and let $\cH$ be the closed bimodule generated by the range of $\delta$. Since
    \begin{align*}
        \norm{\delta(a)}_\cH^2=\int_{\IR}\norm{\pi_l(\Delta_\phi^{it}a)\xi-\xi\cdot J\pi_r(\Delta_\phi^{it}a)^\ast J}_{\tilde\cH}^2\frac{2\,dt}{\cosh 2\pi t}=\mathcal E(a)=\langle a,L(a)\rangle_2
    \end{align*}
    for $a\in M\Omega\cap M^\prime\Omega$, the operator $\delta$ extends to a bounded linear operator from $L^2(M)$ to $\cH$, still denoted by $\delta$, such that $\delta^\ast \delta=L$.

    Define $(\cJ\eta)(t)=\tilde\cJ\eta(t)$ and $(\cV_s\eta)(t)=\eta(s+t)$ for $\eta\in \cH$. The claimed properties of the quadruple $(\cH,\cJ,(\cV_s)_{s\in\IR},\delta)$ are now easy to check except for accretivity of $\cV_{-i/2}$, which was proven in \Cref{prop:trans_weighted_L2}.

    For uniqueness, we have already shown in \cite[Theorem 4.7]{VW23} that the triple $(\cH,\cJ,\delta)$ is essentially uniquely determined by $L$. Moreover, the properties (ii) and (iv) determine $(\cV_s)_{s\in\IR}$ uniquely on the dense subset $
    \operatorname{span}\{x\delta(a)y\mid a\in L^2(M),x,y\in M\}$ of $\cH$.

    From uniqueness it follows that if $L$ commutes strongly with $\Delta_\phi$, then $(\cV_s)_{s\in \IR}$ is a strongly continuous unitary group and $\cV_{-i/2}$ is positive self-adjoint by Stone's theorem. Conversely, if $\cV_{-i/2}$ is positive self-adjoint (and injective since $\cV_{i/2}\cV_{-i/2}=1$ on $\dom(\cV_{-i/2})$), then $\cV_s=\cV_{-i/2}^{2is}$ is unitary and thus
    \begin{equation*}
        \Delta_\phi^{-is}L\Delta_\phi^{is}=\Delta_\phi^{-is}\delta^\ast\delta \Delta_\phi^{is}=\delta^\ast \cV_s^\ast\cV_{s}\delta=\delta^\ast \delta=L.
    \end{equation*}
    Now assume that $(\cH,\cJ,(\cV_s)_{s\in\IR},\delta)$ is a quadruple with the properties from the theorem and let $L=\delta^\ast \delta$. Clearly, $L$ is a positive self-adjoint operator. Moreover, the properties of $\cJ$ and $\delta$ imply $LJ=JL$ and $L\Omega_\phi=0$. Note that if $x,y\in \dom(\sigma^\phi_{z})$ and $a\in \dom(\Delta_\phi^{iz})$, then $x\delta(a)y\in \dom(\cV_{z})$ and $\cV_z(x\delta(a)y)=\sigma^\phi_z(x)\delta(\Delta_\phi^{iz}a)\sigma^\phi_z(y)$. Indeed, for $z\in \IR$ this is immediate from the properties of $(\cV_s)_{s\in \IR}$ and $\delta$ and for general $z\in\IC$ it follows them from the identity theorem for analytic functions.
    
    Let $\mathcal E$ be the closed quadratic form associated with $L$. It remains to show that $\mathcal E(a_+,a_-)\leq 0$ for all $a\in \ker(1-J)$. In fact, since taking positive and negative parts is continuous, it suffices to show this inequality for $a$ in the subset $\{x^\ast\Omega_\phi x-y\Omega_\phi y^\ast\mid x,y\in M,\,xy=0\}$ of $\ker (1-J)$, which is dense by \Cref{lem:dense_real_part}.

    Let $a=x^\ast \Omega_\phi x-y\Omega_\phi y^\ast$ with $x,y\in M$ such that $xy=0$. By Haagerup's approximation technique \cite[Lemma 1.3]{Haa75}, there exist bounded sequences $(x_n)$, $(y_n)$ of entire analytic elements for $\sigma^\phi$ such that $x_n\to x$ and $y_n\to y$ in strong$^\ast$ topology. In particular, $x_ny_n\to 0$ in strong$^\ast$ topology.
    
    We have
    \begin{align*}
        \mathcal E(a_+,a_-)&=\langle \delta(x^\ast \Omega_\phi x),\delta(y\Omega_\phi y^\ast)\rangle_\cH\\
        &=\langle x^\ast \delta(\Omega_\phi x)+\delta(x^\ast \Omega_\phi)x,\delta(y\Omega_\phi)y^\ast+y\delta(\Omega_\phi y^\ast)\rangle\\
        &=\langle \delta(\Omega_\phi x)y,x\delta(y\Omega_\phi)\rangle+\langle y^\ast \delta(x^\ast\Omega_\phi),\delta(\Omega_\phi y^\ast)x^\ast\rangle\\
        &=\langle \delta(\Omega_\phi x)y,x\delta(y\Omega_\phi)\rangle+\langle \cJ(\delta(\Omega_\phi x)y),\cJ(x\delta(y\Omega_\phi))\rangle\\
        &=2\Re\langle \delta(\Omega_\phi x)y,x\delta(y\Omega_\phi)\rangle
        \end{align*}
        Now we use that $\delta(\Omega_\phi x_n)y_n\to \delta(\Omega_\phi x)y$ and $x_n\delta(y_n\Omega_\phi)\to x\delta(y\Omega_\phi)$ to obtain
        \begin{align*}
        \mathcal E(a_+,a_-)
        &=2\lim_{n\to\infty}\Re\langle \delta(\Omega_\phi x_n)y_n,x_n\delta(y_n\Omega_\phi)\rangle\\
        &=2\lim_{n\to\infty}\Re\langle \delta(\Omega_\phi x_n y_n)-\sigma^\phi_{-i/2}(x_n)\delta(\Omega_\phi y_n),x_n\delta(y_n\Omega_\phi)\rangle\\
        &=-2\lim_{n\to\infty}\Re\langle \cV_{-i/2}(x_n\delta(y_n\Omega_\phi)),x_n \delta(y_n\Omega_\phi)\rangle\\
        &\leq 0.\qedhere
    \end{align*}
\end{proof}

\begin{remark}\label{rem:norm_V_s}
    It follows from \Cref{prop:trans_weighted_L2} that $\norm{\cV_s}\leq e^{\pi\abs s}$ for all $s\in \IR$.
\end{remark}

\section{\texorpdfstring{Characterization of bounded quantum Dirichlet forms on $B(H)$}{Characterization of bounded quantum Dirichlet forms on B(H)}}\label{sec:type_I}

In this section, we specialize the characterization of bounded quantum Dirichlet forms to the case when the underlying von Neumann algebra is a type $\mathrm{I}$ factor.

Throughout this section, let $H$ be a Hilbert space. We write $S^2(H)$ for the space of Hilbert--Schmidt operators and recall that if $\pi_L$ denotes the representation of $B(H)$ on $S^2(H)$ by left multiplication, $JA=A^\ast$ and $S^2_+(H)$ denotes the set of positive Hilbert--Schmidt operators on $H$, then $(\pi_L(B(H)),S^2(H),J, S^2_+(H))$ is a standard form.

Letting $\Tr$ be the standard trace on $B(H)$, there exists a unique element $\rho\in S^1(H)$, the set of trace-class operators on $H$, such that $\Tr(x\rho)=\phi(x)$ for all $x\in M$. Let $p_n=\chi_{[1/n,n]}(\rho)$, so $\rho_n=p_n\rho p_n$ is invertible in $p_nB(H)p_n\cong B(p_nH)$.

\begin{lemma}
    Let $(\Phi_t)_{t\geq 0}$ be a uniformly continuous quantum Markov semigroup on $B(H)$ that is KMS-symmetric with respect to the state $\phi$. If $\cL$ is the generator of $(\Phi_t)_{t\geq 0}$ and $\cL_n=p_n\cL(p_n\cdot p_n)p_n$, then $\cL_n$ generates a quantum Markov semigroup on $B(p_n H)$ that is KMS-symmetric with respect to $\phi(p_n\cdot p_n)$.
\end{lemma}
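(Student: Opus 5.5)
The plan is to verify the Evans--Lewis characterization \cite[Theorem 14.7]{EL77b} of generators of uniformly continuous quantum Markov semigroups for $\cL_n$ on $B(p_nH)$, and then check KMS symmetry by a direct trace computation. The key structural fact is that $p_n=\chi_{[1/n,n]}(\rho)$ commutes with $\rho$. Hence $\rho^{1/2}p_n=p_n\rho^{1/2}=\rho_n^{1/2}$, and $\phi(p_n\cdot p_n)=\Tr(\rho_n\,\cdot)$ on $B(p_nH)$ has the invertible density $\rho_n$. In the standard form of $B(p_nH)$ on $S^2(p_nH)$, KMS symmetry of a map $\Phi$ with respect to $\Tr(\rho_n\cdot)$ reads
\begin{equation*}
\Tr(\rho_n^{1/2}x^\ast\rho_n^{1/2}\Phi(y))=\Tr(\rho_n^{1/2}\Phi(x)^\ast\rho_n^{1/2}y).
\end{equation*}
The normalization of the state is irrelevant for this identity.

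\emph{Elementary properties.} $\cL_n$ is bounded and weak$^\ast$ continuous, since $\cL$ is and compressions are. It satisfies $\cL_n(x^\ast)=\cL_n(x)^\ast$ because $\cL$ does.

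\emph{Conditional complete negativity.} Let $x_j,y_j\in p_nB(H)p_n$ with $\sum_j x_jy_j=0$. Since $y_k=p_ny_k$ and $x_j^\ast x_k=p_nx_j^\ast x_kp_n$, we get
\begin{equation*}
\sum_{j,k}y_j^\ast\cL_n(x_j^\ast x_k)y_k=\sum_{j,k}y_j^\ast\cL(x_j^\ast x_k)y_k\leq 0,
\end{equation*}
by conditional complete negativity of $\cL$ on $B(H)$.

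\emph{KMS symmetry.} For $x,y\in p_nB(H)p_n$, the relations $p_n\rho^{1/2}=\rho_n^{1/2}$ and $x=p_nxp_n$ give
\begin{equation*}
\Tr(\rho_n^{1/2}x^\ast\rho_n^{1/2}p_n\cL(y)p_n)=\Tr(\rho^{1/2}x^\ast\rho^{1/2}\cL(y)).
\end{equation*}
This equals the same expression with $\cL$ moved onto $x$, by KMS symmetry of $\cL$. That symmetry of $\cL$ follows by differentiating the KMS symmetry of $\Phi_t$ at $t=0$, which is legitimate because the semigroup is uniformly continuous. Thus $\cL_n$ is KMS-symmetric, and $e^{-t\cL_n}$ is KMS-symmetric for all $t$.

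\emph{Main obstacle: conservativity.} Evans--Lewis also requires $\cL_n(p_n)=p_n\cL(p_n)p_n=0$, and this does not follow from the steps above. It is precisely the condition that $e^{-t\cL_n}$ is unital rather than merely subunital. What I can extract is only $\phi(p_n\cL(p_n)p_n)=\phi(\cL(p_n))=0$, using invariance of $\phi$ together with $p_n$ lying in the centralizer of $\phi$. This gives vanishing in $\phi$-mean, not pointwise.

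I expect this to be genuinely problematic rather than a technicality. For example, take $\cL=\id-\phi(\cdot)1$ on $M_2(\IC)$ with $\rho$ diagonal with distinct eigenvalues, and $p_n$ the rank-one projection onto one eigenvector. This $\cL$ generates a KMS-symmetric quantum Markov semigroup, yet $p_n\cL(p_n)p_n=(1-\phi(p_n))p_n\neq 0$.

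So my plan at this step has two parts. First, I would check whether the intended statement is that $\cL_n$ generates a KMS-symmetric \emph{sub}-Markov (completely positive, subunital) semigroup. That version follows from the steps above, since $\cL_n(p_n)\geq 0$ can be deduced from $\cL(p_n)\geq 0$ on the range of $p_n$; this holds because $\Phi_t(p_n)\leq 1$. Second, I would check whether a KMS-preserving correction can restore unitality. The natural candidate is subtracting $\frac12(hx+xh)$ with $h=\cL_n(p_n)$. However, this modification is KMS-symmetric only if $h$ commutes with $\rho_n$. So either an additional argument establishing $[h,\rho_n]=0$ is needed, or the statement must be weakened to the sub-Markov version.
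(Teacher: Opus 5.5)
Your assessment is correct: the obstruction you identify is an error in the paper's lemma, not a gap in your argument. Your checks of $\cL_n(x^\ast)=\cL_n(x)^\ast$, of conditional complete negativity (via $y_k=p_ny_k$) and of KMS symmetry (via $p_n\rho^{1/2}=\rho_n^{1/2}$ and the KMS symmetry of $\cL$) are exactly the paper's proof. At the one step you flag, the paper only says ``Clearly $\cL_n(1)=0$''. That claim is false, and your example is valid. For $\rho=\mathrm{diag}(0.9,0.1)$ and $n=2$ one has $p_2=E_{11}$, and $\cL=\id-\phi(\cdot)1$ gives $\cL_2(p_2)=(1-\phi(p_2))p_2\neq 0$. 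If $\dim H=\infty$, every $p_n$ is a proper finite-rank projection because $\rho$ is trace class, so this $\cL$ violates the statement for every $n$ with $p_n\neq 0$. Hence only a sub-Markov version or a corrected generator can be proved. Note that the proof of \Cref{thm:CE_KMS} relies on the same false identity, through $k'+(k')^\ast=\Psi'(1)$, so it has to carry the extra term $\cL_n(p_n)\geq 0$.

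One remark of yours is wrong and in fact contradicts your own example: $\phi(p_n\cL(p_n)p_n)=\phi(\cL(p_n))$ fails in general. That $p_n$ lies in the centralizer only gives $\phi(p_nyp_n)=\phi(p_ny)$. In your example, $\phi(\cL_2(p_2))=\phi(p_2)(1-\phi(p_2))>0$. Had it vanished, $\cL_n(p_n)\geq 0$ and faithfulness of $\phi$ would already have proved the lemma. The correct identity is
\[
\phi(\cL_n(p_n))=\Tr(\rho^{1/2}p_n\rho^{1/2}\cL(p_n))=\mathcal E(\Delta_\phi^{1/4}p_n\Omega_\phi),
\]
where $\mathcal E$ is the quantum Dirichlet form of $(\Phi_t)_{t\geq 0}$. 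Three facts combine here: $\mathcal E(a)=0$ forces $La=0$ for the bounded generator $L$; $L\Delta_\phi^{1/4}x\Omega_\phi=\Delta_\phi^{1/4}\cL(x)\Omega_\phi$; and $x\mapsto \Delta_\phi^{1/4}x\Omega_\phi$ is injective. Together they give a sharp criterion: $\cL_n(p_n)=0$ if and only if $\cL(p_n)=0$, i.e. if and only if $p_n$ is a fixed point of the semigroup. In particular, the statement fails for every ergodic semigroup whenever $p_n\notin\{0,1\}$.

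On restoring conservativity, the commutation $[h,\rho_n]=0$ for $h=\cL_n(p_n)$ does fail in general. Take $\IC^3$ with $\rho=\mathrm{diag}(\lambda_1,\lambda_2,\lambda_3)$, where $\lambda_1\neq\lambda_2$ lie in $[1/n,n]$ and $\lambda_3<1/n$, so $p_n=E_{11}+E_{22}$. Let $\cL(x)=k^\ast x+xk-\Psi(x)$ with $\Psi(x)=V^\ast xV+\sigma^\phi_{i/2}(V)x\sigma^\phi_{i/2}(V)^\ast$, $V=E_{31}+E_{32}$, and $k=\sigma^\phi_{-i/2}(k^\ast)$, $k+k^\ast=\Psi(1)$. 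Then $h=p_n\Psi(1-p_n)p_n=E_{11}+E_{12}+E_{21}+E_{22}$, which does not commute with $\rho_n$.

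You do not need this commutation, however. On $B(p_nH)$ the map $\id+\sigma^\phi_{-i/2}$ multiplies the matrix unit $E_{jk}$ of an eigenbasis by $1+(\lambda_j/\lambda_k)^{1/2}>0$. So $k_n^\ast:=(\id+\sigma^\phi_{-i/2})^{-1}(h)$ is well defined, and entrywise $k_n=\sigma^\phi_{-i/2}(k_n^\ast)$ and $k_n+k_n^\ast=h$. Then $x\mapsto k_n^\ast x+xk_n$ is KMS-symmetric and drops out of the conditional-negativity sums. Hence $\cL_n'=\cL_n-(k_n^\ast\cdot+\cdot\,k_n)$ satisfies $\cL_n'(p_n)=0$ and generates a KMS-symmetric quantum Markov semigroup on $B(p_nH)$; this is the same device the paper uses for $k'$. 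It corrects the statement rather than proving it. It also gives a clean proof of your sub-Markov claim. Since $k_n+k_n^\ast=h\geq 0$, the maps $x\mapsto e^{-sk_n^\ast}xe^{-sk_n}$ are subunital completely positive. Therefore $e^{-t\cL_n}$ is the Lie--Trotter limit of compositions of these maps with the ucp maps $e^{-t\cL_n'/m}$.
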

\begin{proof}
    Clearly $\cL_n(1)=0$ and $\cL_n(x^\ast)=\cL_n(x)^\ast$ for $x\in B(p_n H)$. Moreover, if $x_1,\dots,x_m$, $y_1,\dots,y_m\in B(p_n H)$ such that $0=\sum_{j=1}^m x_j y_j=\sum_{j=1}^m x_j p_n y_j $, then 
    \begin{equation*}
        \sum_{j,k=1}^my_j^\ast p_n\cL(p_n x_j^\ast x_k p_n)p_n y_k\leq 0
    \end{equation*}
    since $\cL$ is conditionally completely negative. Therefore, $\cL_n$ generates a quantum Markov semigroup on $B(p_n H)$.

    Additionally, for any $x,y\in B(p_n H)$ we have
\begin{align*}
    \Tr(\rho_n^{1/2}x^*\rho_n^{1/2}\mathcal{L}_n(y))&=\Tr(\rho_n^{1/2}x^*\rho_n^{1/2}p_n\mathcal{L}(y)p_n)\\
    &=\Tr(\rho^{1/2}p_nx^*p_n\rho^{1/2}\cL(y))\\
    &=\Tr(\rho^{1/2}\cL(p_nx^*p_n)\rho^{1/2}y)\\
    &=\Tr(\rho_n^{1/2}\mathcal{L}_n(x^*)\rho_n^{1/2}y),
\end{align*}
where we use that $p_n$ and $\rho$ commute, $\cL$ is KMS-symmetric and $p_nzp_n=z$ for any $z\in B(p_n H)$. Consequently, $\cL_n$ is a KMS-symmetric generator on $B(p_n H)$. 
\end{proof}

Generators of uniformly continuous quantum Markov semigroups on general von Neumann algebras, without any symmetry assumptions, have been fully characterized by Christensen and Evans \cite{CE79} in the following form (see also \cite{Lin76} in the case of an injective von Neumann algebra).

\begin{theorem}[Christensen-Evans theorem]
    Let $M$ be a von Neumann algebra. A linear map $\cL\colon M\to M$ is the generator of a uniformly continuous quantum Markov semigroup if and only if there exist an element $k$ of $M$ and a normal completely positive map $\Psi$ on $M$ such that
    \begin{equation*}
        \cL(x)=k^*x+xk-\Psi(x)
    \end{equation*}
    for all $x\in M$.
\end{theorem}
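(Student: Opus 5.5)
Since $\cL$ is defined by $\cL(x)=\lim_{t\to 0}\frac 1 t(x-\Phi_t(x))$, I read the statement with the normalization that goes with this sign convention: $k^\ast+k=\Psi(1)$, which is exactly $\cL(1)=0$. The plan is to reduce both directions to the Evans--Lewis criterion \cite[Theorem 14.7]{EL77b} recalled above. For the ``if'' direction I would check its hypotheses directly, and for the ``only if'' direction I would use a dilation construction together with an innerness result for derivations.

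\emph{Sufficiency.} Suppose $\cL(x)=k^\ast x+xk-\Psi(x)$ with $\Psi$ normal completely positive and $k^\ast+k=\Psi(1)$. Then $\cL$ is bounded and weak$^\ast$ continuous, satisfies $\cL(x^\ast)=\cL(x)^\ast$, and $\cL(1)=0$. For conditional complete negativity, take $x_j,y_j$ with $\sum_j x_jy_j=0$. The $k$-terms give
\begin{equation*}
\sum_{j,l} y_j^\ast(k^\ast x_j^\ast x_l+x_j^\ast x_lk)y_l=\Bigl(\sum_j x_jy_j\Bigr)^\ast\sum_l x_l k y_l+\Bigl(\sum_j x_jky_j\Bigr)^\ast\sum_l x_ly_l=0.
\end{equation*}
The remaining term is $-\sum_{j,l}y_j^\ast\Psi(x_j^\ast x_l)y_l\le 0$, because $[x_j^\ast x_l]_{j,l}\geq 0$ and $\Psi$ is completely positive.

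\emph{Necessity, step 1 (dilation).} Let $M\subset B(H)$ be the generator of a uniformly continuous quantum Markov semigroup. On $M\odot H$ I would define the sesquilinear form
\begin{equation*}
\langle x\otimes\xi,y\otimes\eta\rangle=\langle\xi,(x^\ast\cL(y)+\cL(x^\ast)y-\cL(x^\ast y))\eta\rangle.
\end{equation*}
Conditional complete negativity together with $\cL(1)=0$ should make this form positive semidefinite. Separating and completing gives a Hilbert space $K$. Left multiplication on the first tensor factor should then induce a normal representation $\pi\colon M\to B(K)$; normality would follow from weak$^\ast$ continuity of $\cL$. The map $\delta(x)\xi=[x\otimes\xi]$, suitably corrected so that $\delta(1)=0$, would give $\delta\colon M\to B(H,K)$ with
\begin{equation*}
\delta(xy)=\pi(x)\delta(y)+\delta(x)y,\qquad \delta(x)^\ast\delta(y)=x^\ast\cL(y)+\cL(x^\ast)y-\cL(x^\ast y).
\end{equation*}
So $\delta$ is a bounded normal derivation into the normal $M$-bimodule $B(H,K)$.

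\emph{Necessity, step 2 (innerness).} This is the main obstacle and the genuinely deep part. The aim is to show $\delta(x)=\pi(x)V-Vx$ for some $V\in B(H,K)$. I would embed $B(H,K)$ as a corner of $B(H\oplus K)$ and view $\delta$ as a derivation of $M$, represented on $H\oplus K$ by $x\mapsto x\oplus\pi(x)$, into $B(H\oplus K)$. Its innerness there is Christensen's theorem: every derivation from a von Neumann algebra into $B(\mathcal K)$, relative to a normal representation, is inner. For injective $M$ one can instead average over an invariant mean on the unitary group and take a weak$^\ast$ limit point. Compressing the implementing operator to the off-diagonal corner then produces $V$. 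I would cite \cite{CE79} rather than reprove this step.

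\emph{Necessity, step 3 (identification).} With $\Psi(x)=V^\ast\pi(x)V$, which is normal and completely positive, substituting $\delta(x)=\pi(x)V-Vx$ into the identity for $\delta(x)^\ast\delta(y)$ gives
\begin{equation*}
x^\ast\cL(y)+\cL(x^\ast)y-\cL(x^\ast y)=\Psi(x^\ast y)-x^\ast\Psi(y)-\Psi(x^\ast)y+x^\ast\Psi(1)y.
\end{equation*}
Hence $D=\cL+\Psi-\tfrac12(\Psi(1)\,\cdot+\cdot\,\Psi(1))$ satisfies $x^\ast D(y)+D(x^\ast)y-D(x^\ast y)=0$, so after adjusting signs $D$ is a bounded $\ast$-derivation-type map. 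Because $\cL(x^\ast)=\cL(x)^\ast$, it has the form $D(x)=i[h,x]$ by the Kadison--Sakai theorem. Setting $k=\tfrac12\Psi(1)+ih$ yields $\cL(x)=k^\ast x+xk-\Psi(x)$, as claimed.
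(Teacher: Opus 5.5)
The paper does not prove this statement. It quotes it from \cite{CE79}, with \cite{Lin76} for injective $M$, so your outline has to stand on its own. Your normalization $k^\ast+k=\Psi(1)$ is right and in fact needed: as printed, the ``if'' direction fails, since e.g.\ $k=1$, $\Psi=0$ gives $\cL(1)=2\neq 0$. Your sufficiency argument via \cite[Theorem 14.7]{EL77b} is correct. The three-step scheme for necessity (dilation, innerness, Kadison--Sakai) is the classical Christensen--Evans route, but the deep step is not justified by what you invoke.

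\textbf{Step 2 (the genuine gap).} You attribute to Christensen the statement that every derivation of a von Neumann algebra into $B(\mathcal K)$, relative to a normal representation, is inner. That is not a theorem. In this generality it is the derivation problem, which by Christensen's extension theorem and Kirchberg's theorem is equivalent to Kadison's similarity problem, and it is open. It is settled only for special classes, e.g.\ injective algebras (your averaging remark, i.e.\ Lindblad's case) or $\mathrm{II}_1$ factors with property $\Gamma$.

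Passing to $B(H\oplus K)$ throws away exactly the structure that makes the Christensen--Evans theorem unconditional. Your $\delta$ satisfies $\delta(x)^\ast\delta(y)=x^\ast\cL(y)+\cL(x^\ast)y-\cL(x^\ast y)\in M$. So it takes values in a von Neumann (self-dual Hilbert $W^\ast$-) $M$-bimodule with $M$-valued inner product, not in an arbitrary dual normal bimodule, and the innerness result of \cite{CE79} concerns derivations with this property.

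To see why the property helps, suppose $M$ has a faithful normal tracial state $\tau$. Then the affine action $u\cdot\eta=\pi(u)\eta u^\ast+\delta(u)u^\ast$ of the unitary group on this module is isometric for $\tau(\eta^\ast\eta)^{1/2}$. The circumcenter $\eta_0$ of the bounded orbit of $0$ lies in the module by weak$^\ast$ compactness and is fixed, i.e.\ $\delta(u)=\eta_0u-\pi(u)\eta_0$. Nothing of the kind is available in $B(\mathcal K)$. Step 2 therefore has to use $\delta(M)^\ast\delta(M)\subseteq M$ explicitly.

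\textbf{Step 1.} This also fails as written. Write $K(x,y)=x^\ast\cL(y)+\cL(x^\ast)y-\cL(x^\ast y)$. The vector $1\otimes\xi$ is null because $K(1,1)=\cL(1)=0$, while $a\otimes\xi$ generally is not. So left multiplication on the first tensor factor does not pass to the quotient.

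The correct action is $\pi(a)[x\otimes\xi]=[ax\otimes\xi]-[a\otimes x\xi]$. Equivalently, work on $\{\sum_j x_j\otimes\xi_j:\sum_j x_j\xi_j=0\}$ with the form $-\sum_{j,l}\langle\xi_j,\cL(x_j^\ast x_l)\xi_l\rangle$. There genuine left multiplication is bounded by $\norm{a}$ (write $\norm{a}^2-a^\ast a=c^\ast c$), and one sets $\delta(x)\xi=[x\otimes\xi-1\otimes x\xi]$. This is presumably the ``correction'' you had in mind.

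\textbf{Step 3.} There is a sign slip: $D=i[h,\cdot\,]$ forces $k=\tfrac12\Psi(1)-ih$, not $\tfrac12\Psi(1)+ih$.
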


If one additionally assumes that $\cL$ satisfies a symmetry condition, say GNS or KMS symmetry, it is natural to ask whether the two parts $x\mapsto k^\ast x+x k$ and $\Psi$ in the Christensen--Evans theorem can be chosen to be symmetric as well.

In the GNS-symmetric case, this was proven in \cite{Wir24a}. If $M=B(H)$, the KMS-symmetric case was settled by Fagnola and Umanità in \cite[Theorem 18]{FU10} (see also Remark 6). Here we give a new proof of their result with the additional benefit of a universal bound on the norms of $k$ and $\Psi$ in terms of the norm of $\cL$.

\begin{proposition}\label{thm:CE_KMS}
    If $\cL$ is a KMS-symmetric generator of a uniformly continuous quantum Markov semigroup, then there exist an element $k\in M$ with $\norm{k}\leq 4\norm{\cL}$ and a normal KMS-symmetric completely positive map $\Psi$ on $M$ such that
    $\norm{\Psi}\leq 2\norm{\cL}$, $k^*+k=\Psi(1)$ and 
    \begin{equation*}
        \cL(x)=k^*x+xk-\Psi(x).
    \end{equation*}
\end{proposition}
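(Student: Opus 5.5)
We would not use the Christensen--Evans decomposition directly. Instead we would build $\Psi$ from the integral representation of \Cref{thm:int_rep_form}, so that KMS symmetry is automatic, and then read off $k$ from what is left over.

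\textbf{Step 1 (split the form).} Let $\mathcal E=Q_{\cL^{(2)}}$ be the bounded quantum Dirichlet form of $\cL$, and take $(\tilde\cH,\tilde\cJ,\xi)$ as in \Cref{thm:int_rep_form}. For $a\in M\Omega_\phi\cap M'\Omega_\phi$ we expand
\begin{equation*}
\norm{\pi_l(a)\xi-\xi\cdot J\pi_r(a)^\ast J}^2=\norm{\pi_l(a)\xi}^2+\norm{\xi\cdot J\pi_r(a)^\ast J}^2-2\Re\langle \pi_l(a)\xi,\xi\cdot J\pi_r(a)^\ast J\rangle .
\end{equation*}
Substituting into the integral formula with weight $\frac{2\,dt}{\cosh 2\pi t}$ writes $\mathcal E$ as a diagonal part minus a cross part.

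\textbf{Step 2 (the cp part).} The cross term defines a bounded operator $T$ on $L^2(M)$ by $\langle a,Tb\rangle=\langle \pi_l(a)\xi,\xi\cdot J\pi_r(b)^\ast J\rangle$. We would check three things.
\begin{itemize}
\item $T$ is completely positivity preserving. Vectors of the cone are approximated by $x^\ast\Omega_\phi x$, and the pairing becomes $\langle x\xi y,\ldots\rangle$-type expressions. Matrix amplifications via \Cref{prop:std_form_matrix_ampl} reduce complete positivity to positivity.
\item $T$ commutes with $J$, which follows from $\tilde\cJ\xi=\xi$.
\item $T$ is symmetric, again from $\tilde\cJ$.
\end{itemize}
By the integral formula for the $\mathcal V$-transform proved above, the cross part of $\mathcal E$ is $2\Re\langle a,\check T a\rangle$. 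Since $\mathcal V$ preserves complete positivity preservation and commutation with $J$, the operator $\check T$ is a symmetric completely positivity-preserving operator. By the bijection with KMS-symmetric positive maps (\cite{Cip97}), there is a normal KMS-symmetric cp map $\Psi$ with $\Psi^{(2)}=\check T$.

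\textbf{Step 3 (the remaining part and $k$).} We would put $D=\cL+\Psi$, which is KMS-symmetric. From the diagonal terms, $D^{(2)}$ is the $\mathcal V$-transform of $a\mapsto \ell a+a r$, where $\ell,r$ are the bounded positive operators representing $x\mapsto\langle\xi,x^\ast x\xi\rangle$ and its right analogue. This is the place where one has to show that such a form comes from multiplication operators of $M$, via normality and $\tilde\cJ$-symmetry.

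Equivalently, one can argue at the level of $M$. Conditional complete negativity of $\cL$ and complete positivity of $\Psi$, combined with the structure of the cross term, should show that $D(x^\ast y)-x^\ast D(y)-D(x^\ast)y$ is of the form $-x^\ast\Psi$-free; that is, $D$ is a ``zero-order'' map with $D(x)=k^\ast x+xk$ for some $k\in M$, in the spirit of the derivation argument in Christensen--Evans.

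Evaluating at $1$ and using $\cL(1)=0$ then gives $k^\ast+k=\Psi(1)$.

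\textbf{Step 4 (bounds).} Since $\mathcal V$ is ucp, $\norm{\Psi}=\norm{\Psi(1)}\le\norm{T}$. We would bound $\norm{T}$ through $\norm{\xi}^2$, which is controlled by $\tilde\omega(1)$ and hence by $\frac12\norm{\cL}$, up to the factor introduced by the Hahn--Banach extension in \Cref{thm:int_rep_form}. This should give $\norm{\Psi}\le 2\norm{\cL}$. For $k$, we would choose it inside the family $k+ih$, $h=h^\ast$, by taking the canonical solution, which gives $\norm{k}\le\norm{\cL}+\norm{\Psi}$ up to constants, aiming for $4\norm{\cL}$.

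\textbf{Main obstacle.} We expect Step 3 to be the hardest: showing that the KMS-symmetric map $\cL+\Psi$ is exactly of the form $x\mapsto k^\ast x+xk$ with $k\in M$, rather than only ``form-wise''. If the $\mathcal V$-transform route stalls there, the first fallback is to start from an arbitrary Christensen--Evans pair $(k_0,\Psi_0)$. We would then show that $\Psi-\Psi_0$ is a map of the form $x\mapsto a^\ast x+xa$, using uniqueness of the derivation in \Cref{thm:char_bdd}, and set $k=k_0+a$.
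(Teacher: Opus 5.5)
Your proposal has a genuine gap: it never establishes the existence of $k\in M$, and the construction of $\Psi$ in Step 2 rests on claims that fail in general.

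\textbf{Step 3.} Your Step 3 is not a residual technicality but the whole content of the proposition. That $\cL+\Psi$ is exactly $x\mapsto k^\ast x+xk$ with $k\in M$ is what a KMS-symmetric Christensen--Evans theorem asserts, and you only say it ``should'' follow. The fallback does not supply it either. The uniqueness in \Cref{thm:char_bdd} concerns the quadruple $(\cH,\cJ,(\cV_s)_{s\in\IR},\delta)$, whereas completely positive parts are not determined by $\cL$: replacing a Kraus operator $V$ by $V-c$ changes $\Psi$ by a map $x\mapsto a^\ast x+xa$. Turning uniqueness of $\delta$ into such a statement needs an innerness argument you do not give.

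\textbf{Step 2.} This step does not produce a cp map. The split into diagonal and cross terms depends on the Hahn--Banach extension $\tilde\omega$, because only $\tilde\omega(v^\ast v)$ with $v\in I$ is fixed by $\cL$. The cross term $\langle\pi_l(a)\xi,\xi\cdot J\pi_r(b)^\ast J\rangle=\tilde\omega(\pi_l(a)^\ast\otimes\pi_r(b))$ is a priori bounded by $\norm{\pi_l(a)}\norm{\pi_r(b)}\norm{\xi}^2$, not by $\norm{a}_2\norm{b}_2$. So $T\in B(L^2(M))$ requires $\xi$ to be a bounded vector, which the paper notes is a special situation. Also, $\tilde\cJ\xi=\xi$ only yields $T^\ast=JTJ$, not symmetry.

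Worse, take the most favourable case $\tilde\omega(x\otimes y^\prime)=\frac12\langle\Omega_\phi,\Psi(x)y^\prime\Omega_\phi\rangle$ with $\Psi$ KMS-symmetric. Then $T=\frac12\Psi_{\mathrm{GNS}}^\ast$, where $\Psi_{\mathrm{GNS}}(x\Omega_\phi)=\Psi(x)\Omega_\phi$. This operator preserves $\overline{M^\prime_+\Omega_\phi}$ but in general not $L^2_+(M)$. Already on $M_2(\IC)$, with a Kraus operator that is not a modular eigenvector, it need not map positive matrices to self-adjoint ones. So you cannot appeal to the fact that $\mathcal V$ preserves complete positivity preservation. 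The operator that should equal $\Psi^{(2)}$ is $\mathcal V(T+T^\ast)$, and its positivity is exactly what would have to be proved.

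\textbf{Step 4.} This step conflates norms. $\norm{\check T}\le\norm{T}$ is an inequality in $B(L^2(M))$, while $\norm{\Psi}$ is the norm on $M$, and for KMS-symmetric cp maps only $\norm{\Psi^{(2)}}\le\norm{\Psi}$ holds. For instance, on $M_2(\IC)$ with density $\rho=\mathrm{diag}(1/5,4/5)$, the map $\Psi(x)=e_{21}xe_{12}+4e_{12}xe_{21}$ is KMS-symmetric with $\norm{\Psi}=4$. Yet $\Psi^{(2)}(A)=2(A_{11}e_{22}+A_{22}e_{11})$ has norm $2$. Bounding $\norm{T}$ by $\norm{\xi}^2=\tilde\omega(1)$, and that by $\frac12\norm{\cL}$, is likewise unsupported. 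Finally, there is no family $k+ih$ to optimize over: once $\Psi$ is fixed, $k$ is determined by $\cL$ up to a central skew-adjoint element.

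\textbf{What the paper does.} The paper's argument, for $M=B(H)$, is elementary and uses the type $\mathrm{I}$ structure essentially.
\begin{enumerate}[(1)]
\item Start from a Christensen--Evans pair with Kraus operators $V_j$.
\item Compress to $B(p_nH)$ with $p_n=\chi_{[1/n,n]}(\rho)$, where $\sigma^\phi_{\pm i/2}$ are bounded.
\item Average the decomposition with its KMS adjoint; this is allowed because $\cL_n$ is KMS-symmetric.
\item Replace $V_j$ by $V_j-\langle\eta,V_j\eta\rangle$ for an eigenvector $\eta$ of $\rho$. This leaves $\cL_n$ unchanged but makes $\langle\xi,\Psi_n(\xi\eta^\ast)\eta\rangle=0$. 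Then set $k_n=(\id+\sigma^\phi_{i/2})^{-1}(\Psi_n(1))$.
\item Evaluate $\cL_n$ on rank-one operators $\xi\eta^\ast$ to obtain $\norm{\Psi_n}\le 2\norm{\cL}$ and $\norm{k_n}\le 4\norm{\cL}$.
\item Pass to the limit $n\to\infty$ by weak$^\ast$ compactness.
\end{enumerate}
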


\begin{proof}
    Let $\tilde{\Psi}$ and $\tilde{k}$ be as in the Christensen-Evans theorem such that
    \begin{equation*}
        \cL(x)=\tilde{k}^*x+x\tilde k-\tilde{\Psi}(x)
    \end{equation*}
    for all $x\in M$. Let $\tilde{\Psi}(\cdot)=\sum_{j\in I}V_j^*\cdot V_j$ be the Kraus decomposition of $\tilde{\Psi}$ and let $\eta\in H$ be an eigenvector of $\rho$ with eigenvalue $\lambda$. Let $n\in \mathbb{N}$ be arbitrary such that $\lambda\in [1/n,n]$. In this case, $\cL_n$ is given by
    \begin{equation*}
        \cL_n(x)=p_nk^*p_nx+xp_nkp_n-\sum_{j\in I}p_nV_j^*p_n x p_n V_j p_n
    \end{equation*}
    for all $x\in M_n$. Since $\cL_n$ is KMS-symmetric, we have
    \begin{align*}
        \Tr\left(\rho_n^{1/2}\cL_n(x^*)\rho_n^{1/2}y\right)=&\Tr\left(\rho_n^{1/2}x^*\rho_n^{1/2}\cL_n(y)\right)\\
        =&\Tr\left(\rho_n^{1/2}x^*\rho_n^{1/2}(p_nk^*p_n y+yp_nkp_n-\sum_{j\in I}p_nV_j^*p_nyp_nV_jp_n)\right)\\
        =&\Tr\Bigg(\rho_n^{1/2}(\sigma^\phi_{i/2}(p_nkp_n)x^*+x^*\sigma^\phi_{-i/2}(p_nk^*p_n)\\
        &-\sum_{j\in I}\sigma^\phi_{i/2}(p_nV_jp_n)x^*\sigma^\phi_{-i/2}(p_nV_j^*p_n))\rho_n^{1/2}y\Bigg),
    \end{align*}
    using the fact that $\sigma^\phi_{-i/2}$ and $\sigma^\phi_{i/2}$ are bounded on $B(p_n H)$. Now we can define
    \begin{equation*}
        \Psi'(x)=\frac{1}{2}\sum_{j\in I}\bigg(p_nV_j^*p_nxp_nV_jp_n+\sigma^\phi_{i/2}(p_nV_jp_n)x\sigma^\phi_{-i/2}(p_nV_j^*p_n)\bigg)
    \end{equation*}
    for $x\in B(p_n H)$ and 
    \begin{equation*}
        k'=\frac{1}{2}(p_nkp_n+\sigma^\phi_{-i/2}(p_nk^*p_n)).
    \end{equation*}
    These definitions give
    \begin{equation*}
        \cL_n(x)=(k')^*x+xk'-\Psi'(x),\quad x\in B(p_n H),
    \end{equation*}
    and by construction $\Psi'$ is KMS-symmetric. We also see that $k'=\sigma_{-i/2}((k')^*)$. Together with the relation that $k'+(k')^*=\Psi'(1)$, this gives
    \begin{equation*}
        (k')^*=(\id+\sigma^\phi_{-i/2})^{-1}(\Psi'(1))\text{ and }k'=(\id+\sigma_{i/2})^{-1}(\Psi'(1)).
    \end{equation*}
    We have now obtained a KMS-symmetric $\Psi'$, but we are yet to establish a bound on its norm. For this, we need to modify $\Psi'$ further. Let $\omega_j=\ip{\eta,V_j\eta}$. Note that
    \begin{equation*}
        \omega_j=\ip{\eta,p_nV_jp_n\eta}=\ip{\eta,\sigma^\phi_{i/2}(p_nV_jp_n)\eta}
    \end{equation*}
    by choice of $\eta$. Now define $W_j=V_j-\omega_j$, $\Psi_n:B(p_n H)\rightarrow B(p_nH)$ by 
    \begin{equation*}
        \Psi_n(x)=\frac{1}{2}\sum_{j\in I}\bigg(p_nW_j^*p_nxp_nW_jp_n+\sigma^\phi_{i/2}(p_nW_jp_n)x\sigma^\phi_{-i/2}(p_nW_j^*p_n)\bigg)
    \end{equation*}
    and $k_n=(\id+\sigma_{i/2})^{-1}(\Psi_n(1))$. We want to show that the change from $V_j$ to $W_j$ does not affect the generator. Let $V\in B(p_n H)$ and $\omega\in \mathbb{C}$ be arbitrary and let $W=V-\omega$. Then we compute that
    \begin{align}
        &(\id+\sigma_{-i/2})^{-1}(W^*W)x+x(\id+\sigma_{i/2})^{-1}(W^*W)-W^*xW\nonumber\\
        =&(\id+\sigma_{-i/2})^{-1}(V^*V)x+x(\id+\sigma_{i/2})^{-1}(V^*V)-V^*xV\nonumber\\
        &-(\id+\sigma_{-i/2})^{-1}(\overline{\omega}V+\omega V^*)x+x(\id+\sigma_{i/2})^{-1}(\overline{\omega}V+\omega V^*)+\overline{\omega}xV+\omega V^*x.\label{eq:V-W-equality-1}
    \end{align}
    Likewise, denoting $\sigma_{i/2}(W)$ and $\sigma_{i/2}(V)$ by $\tilde{W}$ and $\tilde{V}$, respectively,
        \begin{align}
        &(\id+\sigma_{-i/2})^{-1}(\tilde{W}\tilde{W}^*)x+x(\id+\sigma_{i/2})^{-1}(\tilde{W}\tilde{W}^*)-\tilde{W}x\tilde{W}^*\nonumber\\
        =&(\id+\sigma_{-i/2})^{-1}(\tilde{V}\tilde{V}^*)x+x(\id+\sigma_{i/2})^{-1}(\tilde{V}\tilde{V}^*)-\tilde{V}x\tilde{V}^*\nonumber\\
        &-(\id+\sigma_{-i/2})^{-1}(\overline{\omega}\tilde{V}+\omega \tilde{V}^*)x+x(\id+\sigma_{i/2})^{-1}(\overline{\omega}\tilde{V}+\omega \tilde{V}^*)+\overline{\omega}\tilde{V}x+\omega x\tilde{V}^*.\label{eq:V-W-equality-2}
    \end{align}
    Now 
    \begin{equation*}
        V+\tilde{V}=V+\sigma_{i/2}(V)=(\id+\sigma_{i/2})(V)=(\id+\sigma_{-i/2})(V^*),
    \end{equation*}
    so 
    \begin{equation*}
        (\id+\sigma_{-i/2})^{-1}(\overline{\omega}V)+(\id+\sigma_{-i/2})^{-1}(\overline{\omega}\tilde{V})=\overline{\omega}V
    \end{equation*}
    Together with an analogous derivation for the other terms, this shows that the terms proportional to $\omega$ and $\overline{\omega}$ in Equations \ref{eq:V-W-equality-1} and \ref{eq:V-W-equality-2} add up to zero.
    
    This proves that changing $V_j$ to $V_j-\omega_j$ does not change the corresponding generator, so
    \begin{equation*}
        \cL_n(x)=k_n^*x+xk_n-\Psi_n(x),\quad x\in B(p_n H).
    \end{equation*}

    Let $\epsilon>0$ be arbitrary and let $\xi\in p_nH$ be a vector such that $\ip{\xi,\Psi_n(1)\xi}\geq \norm{\Psi_n(1)}-\epsilon$. We now consider the element $\xi\eta^*\in B(p_n H)$ defined as the map $\zeta\mapsto \xi\ip{\eta,\zeta}$. We will now lower bound the norm of $\cL_n(\xi\eta^*)$ by computing $\Re\ip{\xi,\cL_n(\xi\eta^*)\eta}$. First, observe that
    \begin{align*}
        \ip{\xi,\Psi_n(\xi\eta^*)\eta}&=\frac{1}{2}\sum_{j\in I}\ip{\xi,p_nW_j^*p_n\xi\eta^*p_nW_jp_n\eta}+\ip{\xi,\sigma^\phi_{i/2}(p_nW_jp_n)\xi\eta^*\sigma^\phi_{-i/2}(p_nW_j^*p_n)\eta}\\
        &=\frac{1}{2}\sum_{j\in I}\ip{\xi,p_nW_j^*p_n\xi}\ip{\eta,p_nW_jp_n\eta}+\ip{\xi,\sigma^\phi_{i/2}(p_nW_jp_n)\xi}\ip{\eta,\sigma^\phi_{-i/2}(p_nW_j^*p_n)\eta}\\
        &=0.
    \end{align*}
    Then
    \begin{align*}
        \Re\ip{\xi,\cL_n(\xi\eta^*)\eta}&=\Re\left(\ip{\xi,(\id+\sigma^\phi_{-i/2})^{-1}(\Phi_n(1))\xi\eta^*\eta}+\ip{\xi,\xi\eta^*(\id+\sigma^\phi_{i/2})^{-1}(\Psi_n(1))\eta}\right)\\
        &=\Re\left(\ip{\xi,(\id+\sigma^\phi_{-i/2})^{-1}(\Psi_n(1))\xi}+\ip{\eta,(\id+\sigma_{i/2})^{-1}(\Psi_n(1))\eta}\right)\\
        &=\frac{1}{2}\left(\ip{\xi,\Psi_n(1)\xi}+\ip{\eta,\Psi_n(1)\eta}\right).
    \end{align*}
    Since $\Psi_n(1)$ is positive, this shows that $\Re\ip{\xi,\cL_n(\xi\eta^*)\eta}\geq (\norm{\Psi_n(1)}-\epsilon)/2$. Since $\Psi_n$ is completely positive and $\epsilon$ was arbitrary, we conclude that $\norm{\Psi_n}\leq 2\norm{\cL_n}$.

    Next we obtain a norm bound for $k_n$. Let $\epsilon>0$ and let $\xi,\xi'\in H$ be unit vectors such that $\ip{\xi',k_n^*\xi}\geq \norm{k_n^*}-\epsilon$. By construction, we have
    \begin{equation*}
        \ip{\eta,k_n\eta}=\ip{\eta,\sigma_{-i/2}(k_n^*)\eta}=\ip{\eta,k_n^*\eta},
    \end{equation*}
    so 
    \begin{equation*}
        |\ip{\eta,k_n\eta}|=|\frac{1}{2}\ip{\eta,\Psi_n(1)\eta}|\leq \frac{1}{2}\norm{\Psi_n}.
    \end{equation*}
    On the one hand, 
    \begin{equation*}
        |\ip{\xi',(\cL_n+\Psi_n)(\xi\eta^*)\eta}|\leq \norm{\cL_n+\Psi_n}\leq 3\norm{\cL}
    \end{equation*}
    and, on the other hand,
    \begin{equation*}
        \ip{\xi',(\cL_n+\Psi_n)(\xi\eta^*)\eta}=\ip{\xi',k_n^*\xi}\ip{\eta,\eta}+\ip{\xi',\xi}\ip{\eta,k_n\eta}.
    \end{equation*}
    From this we conclude that
    \begin{align*}
        |\ip{\xi',k_n^*\xi}|&\leq |\ip{\xi',(\cL_n+\Psi_n)(\xi\eta^*)\eta}|+|\ip{\xi',\xi}\ip{\eta,k_n\eta}|\\
        &\leq 3\norm{\cL}+\frac{1}{2}|\ip{\xi',\xi}|\norm{\Psi_n}\\
        &\leq 4\norm{\cL}.
    \end{align*}

    By the Banach--Alaoglu theorem, there exist subnets $(k_{n_\alpha})$ and $(\Psi_{n_\alpha})$ and $k\in B(H)$, $\Psi\in B(B(H))$ such that $k_{n_\alpha}\to k$ in weak$^\ast$ topology and $\Psi_{n_\alpha}(p_{n_\alpha}\cdot p_{n_{\alpha}})\to \Psi$ in point-weak$^\ast$ topology. The limit $\Psi$ is a unital completely positive map that is KMS-symmetric with respect to $\phi$, hence normal. The previous norm bounds imply $\norm{k}\leq 4\norm{\cL}$ and $\norm{\Psi}\leq 2\norm{\cL}$.
    
    Moreover, passing to the limit in $p_n\cL(p_n xp_n)p_n=k_n^\ast p_n x p_n+p_n xp_n k_n-\Psi_n(p_nxp_n)$ for $x\in B(H)$, we obtain
    \begin{equation*}
        \cL(x)=k^*x+xk-\Psi(x),\quad x\in B(H).\qedhere
    \end{equation*}
\end{proof}

Based on this KMS-symmetric Christensen--Evans theorem, we can now give an explicit characterization of bounded quantum Dirichlet forms on type $\mathrm{I}$ factors.

\begin{theorem}\label{thm:char_bdd_type_I}
    If $\mathcal E$ is a bounded completely Dirichlet form on $S^2(H)$ with respect to $\rho^{1/2}$, then there exists a family $(V_j)_{j\in I}$ in $\dom(\sigma^\phi_{i/2})$ such that $\sigma_{i/2}(V_j)=V_j^\ast$ for all $j\in I$, $\sum_{j\in I}V_j^\ast V_j$ converges in strong operator topology and
    \begin{equation*}
        \mathcal E(A)=\sum_{j\in I}\int_\IR\norm{V_j\rho^{it}A\rho^{-it}-\rho^{it}A \rho^{-it}V_j^\ast}_2^2\frac{2\,dt}{\cosh 2\pi t},\quad A\in S^2(H).
    \end{equation*}
    Conversely, if $(V_j)_{j\in I}$ is a family in $\dom(\sigma^\phi_{i/2})$ such that $\sigma_{i/2}(V_j)=V_j^\ast$ for all $j\in I$ and $\sum_{j\in I}V_j^\ast V_j$ converges in strong operator topology, then
    \begin{equation*}
        \mathcal E(A)=\sum_{j\in I}\int_\IR\norm{V_j\rho^{it}A\rho^{-it}-\rho^{it}A \rho^{-it}V_j^\ast}_2^2\frac{2\,dt}{\cosh 2\pi t},\quad A\in S^2(H),
    \end{equation*}
    defines a bounded quantum Dirichlet form on $S^2(H)$ with respect to $\rho^{1/2}$.
\end{theorem}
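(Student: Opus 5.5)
For the direct implication, I will start from the KMS-symmetric Christensen--Evans decomposition of \Cref{thm:CE_KMS}. Let $\cL$ be the generator of the uniformly continuous KMS-symmetric quantum Markov semigroup associated with $\mathcal E$. Write $\cL(x)=k^\ast x+xk-\Psi(x)$ with $\Psi$ normal, completely positive and KMS-symmetric, and $k^\ast+k=\Psi(1)$. On $B(H)$ the map $\Psi$ has a Kraus decomposition $\Psi(x)=\sum_j W_j^\ast xW_j$.

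The first task is to show that KMS symmetry of $\Psi$ allows the Kraus operators to be chosen with $\sigma_{i/2}(V_j)=V_j^\ast$. KMS symmetry means that $\Psi$ is invariant under the involution $W\mapsto \sigma_{i/2}(W)^\ast$, that is, $W\mapsto \rho^{1/2}W^\ast\rho^{-1/2}$ on a suitable domain, acting on the Kraus space. Concretely, the Choi-type operator of $\Psi$ relative to $\rho^{1/4}$, namely $\Psi^{(2)}$ viewed as a positive element of $B(S^2(H))$, should be invariant under the conjugate-linear involution $A\mapsto \rho^{1/4}A^\ast\rho^{-1/4}$ of the Kraus space. Splitting the Kraus space into the $\pm1$ eigenspaces of this anti-unitary, or simply taking $V=\frac12(W+\sigma_{i/2}(W)^\ast)$ and $V'=\frac{i}{2}(W-\sigma_{i/2}(W)^\ast)$ in the spirit of the symmetrization in the proof of \Cref{thm:CE_KMS}, yields a Kraus family with $\sigma_{i/2}(V_j)=V_j^\ast$. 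Domain issues are handled by working first on $p_nH$, where $\sigma_{i/2}$ is bounded, and then passing to a limit as in \Cref{thm:CE_KMS}.

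Once $\Psi(x)=\sum_jV_j^\ast xV_j=\sum_j \sigma_{i/2}(V_j)x\sigma_{-i/2}(V_j^\ast)$, the relation $k^\ast=k$ should follow from the uniqueness relations $k=(\id+\sigma_{i/2})^{-1}(\Psi(1))$ used in that proof. I will then verify directly that, for $A$ in the Tomita algebra, the form $\langle A,L A\rangle$ equals
\begin{equation*}
\tfrac12\sum_j\Bigl(\mathcal E_j(\Delta^{1/4}A,\Delta^{-1/4}A)+\mathcal E_j(\Delta^{-1/4}A,\Delta^{1/4}A)\Bigr),
\end{equation*}
where $\mathcal E_j(A)=\norm{V_jA-AV_j^\ast}_2^2$ at the level of the ``real part''. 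Rather than matching this by hand, I will use the residue formula \Cref{lem:appl_residue_thm}, exactly as in \Cref{thm:int_rep_form}, applied to $f(z)=\langle\Delta^{-\bar z}A,L\Delta^{z}A\rangle$. The boundary values $f(\pm1/4+it)$ are computed from $\cL$, and they equal $\norm{V_j\rho^{it}A\rho^{-it}-\rho^{it}A\rho^{-it}V_j^\ast}_2^2$ summed over $j$. This uses $\Delta^{1/4}(x\rho^{1/2})=\rho^{1/4}x\rho^{1/4}$ and the relation $\sigma_{i/2}(V_j)=V_j^\ast$. Density and boundedness then extend the formula to all of $S^2(H)$. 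Strong convergence of $\sum_j V_j^\ast V_j=\Psi(1)$ is automatic.

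For the converse, I will apply the easy direction of \Cref{thm:char_bdd}. Set $\cH=\bigoplus_j L^2(\IR,\mu;S^2(H))$ with bimodule structure $(x\eta y)(t)=\sigma_t(x)\eta(t)\sigma_t(y)$, translation group $\cV_s$, and $\cJ$ acting pointwise by $A\mapsto A^\ast$. Define $\delta(A)_j(t)=V_j\rho^{it}A\rho^{-it}-\rho^{it}A\rho^{-it}V_j^\ast$. This $\delta$ is bounded because $\sum_jV_j^\ast V_j$ converges, and because $\sigma_{i/2}(V_j)=V_j^\ast$ makes $\sum_j V_jV_j^\ast$ bounded as well. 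The relation $\delta J=\cJ\delta$ is immediate, and the intertwining $\delta\Delta^{is}=\cV_s\delta$ holds by construction. Accretivity of $\cV_{-i/2}$ comes from \Cref{prop:trans_weighted_L2}.

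The main obstacle is the Leibniz rule (v), since $\delta(x\Omega_\phi y)$ in this standard form is $\delta(x\rho^{1/2}y)$. Here the condition $\sigma_{i/2}(V_j)=V_j^\ast$ is exactly what converts $V_jx\rho^{1/2}y-x\rho^{1/2}yV_j^\ast$ into the sum $x(V_j\rho^{1/2}y-\rho^{1/2}yV_j^\ast)+\ldots$, with the commutator terms matched via $\rho^{1/2}V_j^\ast=\rho^{1/2}\sigma_{i/2}(V_j)=V_j\rho^{1/2}$ after shifting by $\rho^{it}$. I expect this algebra, together with the domain bookkeeping for unbounded $\rho^{-1/2}$, to require the most care. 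A fallback is to truncate to $p_n$ and pass to a limit using lower semicontinuity of Dirichlet forms.
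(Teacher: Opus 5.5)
Your overall route is the paper's. You use the KMS-symmetric Christensen--Evans decomposition, then symmetrize the Kraus operators; your combinations $\frac12(W+\sigma^\phi_{i/2}(W)^*)$ and $\frac{i}{2}(W-\sigma^\phi_{i/2}(W)^*)$ work as well as the paper's $e^{\pm i\pi/4}$ ones. You then apply \Cref{lem:appl_residue_thm}, and for the converse you feed $L^2(\IR,\mu;\bigoplus_j S^2(H))$ into \Cref{thm:char_bdd}. Your Leibniz computation is the paper's. Since $\rho^{1/2}V_j^*=V_j\rho^{1/2}$ is an identity of bounded operators, no truncation fallback is needed there.

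\textbf{The gap: $k^*=k$ is false.} In an eigenbasis of $\rho$, $(\id+\sigma^\phi_{i/2})^{-1}(\Psi(1))$ has entries $\frac{\rho_a^{1/2}}{\rho_a^{1/2}+\rho_b^{1/2}}\Psi(1)_{ab}$. This is self-adjoint only if $\Psi(1)$ commutes with $\rho$, which already fails on $M_2(\IC)$. Take $\rho=\mathrm{diag}(\rho_1,\rho_2)$ with $\rho_1\neq\rho_2$ and $r=(\rho_1/\rho_2)^{1/4}$. The single Kraus operator $V=\bigl(\begin{smallmatrix}1&r\\ r^{-1}&1\end{smallmatrix}\bigr)$ satisfies $\sigma^\phi_{i/2}(V)=V^*$, yet $V^*V$ has off-diagonal entries $r+r^{-1}\neq0$.

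Since $\cL$ and $\Psi$ determine $k$ up to an imaginary scalar, and $x\mapsto\frac12(\Psi(1)x+x\Psi(1))$ is not KMS-symmetric unless $\Psi(1)$ commutes with $\rho$, boundary values computed with $k=\frac12\Psi(1)$ are wrong. This matters because that boundary identity is the whole content of the forward direction. The residue lemma does not spare you the computation; it only turns the boundary identity into the integral formula.

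The relation you need is $k^*=\sigma^\phi_{i/2}(k)$, i.e.\ $\rho^{1/2}k^*=k\rho^{1/2}$. This is what $k=(\id+\sigma^\phi_{i/2})^{-1}(\Psi(1))$ actually gives. Up to an imaginary scalar, it is also what KMS symmetry of $x\mapsto k^*x+xk=\cL(x)+\Psi(x)$ forces. It is the relation the paper uses when it redistributes the $(k^*+k)$-terms.

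With it, the computation closes cleanly:
\begin{itemize}
\item $m=\sigma^\phi_{i/4}(k)$ and $U_j=\sigma^\phi_{i/4}(V_j)$ are self-adjoint, and $L$ acts on $S^2(H)$ by $b\mapsto mb+bm-\sum_jU_jbU_j$.
\item The relation $k+k^*=\Psi(1)$ becomes $\rho^{1/2}m+m\rho^{1/2}=\sum_jU_j\rho^{1/2}U_j$.
\item For $a=\rho^{1/4}y\rho^{1/4}$ one has $V_ja-aV_j^*=\rho^{1/4}[U_j,y]\rho^{1/4}$.
\end{itemize}
Expanding $\sum_j\norm{\rho^{1/4}[U_j,y]\rho^{1/4}}_2^2$ then gives exactly $2\Re\langle y\rho^{1/2},L(\rho^{1/2}y)\rangle=f(-\tfrac14)+f(\tfrac14)$.

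\textbf{Three smaller points.}

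First, with this identity \Cref{lem:appl_residue_thm} yields the weight $\frac{dt}{\cosh 2\pi t}$, not $\frac{2\,dt}{\cosh 2\pi t}$. You can check this in the tracial case, where $\cL=\frac12\sum_j[V_j,[V_j,\cdot\,]]$ and $\mathcal E(a)=\frac12\sum_j\norm{[V_j,a]}_2^2$. So replace $V_j$ by $V_j/\sqrt2$ at the end, which preserves all hypotheses.

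Second, in the converse you claim that $\sigma^\phi_{i/2}(V_j)=V_j^*$ makes $\sum_jV_jV_j^*$ bounded. This is false in infinite dimensions. With $\rho e_j=\rho_je_j$, the operators $V_j=e_0e_j^*+(\rho_j/\rho_0)^{1/2}e_je_0^*$, $j\geq1$, satisfy the hypotheses, but $\sum_jV_jV_j^*$ contains $e_0e_0^*$ once for every $j$ and diverges. The claim is not needed, though. Since $\norm{BV_j^*}_2=\norm{V_jB^*}_2$, you get $\sum_j\norm{BV_j^*}_2^2=\Tr(B\Psi(1)B^*)$, so $\sum_jV_j^*V_j$ alone bounds $\delta$.

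Third, passing from $p_nH$ to $H$ to get $W_j\in\dom(\sigma^\phi_{i/2})$ requires a uniform bound. It comes from KMS symmetry of the truncated map, $\sum_j\sigma^\phi_{i/2}(p_nW_jp_n)\sigma^\phi_{i/2}(p_nW_jp_n)^*=p_n\Psi(p_n)p_n$, combined with weak$^*$ closedness of $\sigma^\phi_{i/2}$, as in the paper.
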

\begin{proof}
First let $\mathcal E$ be a bounded quantum Dirichlet form and let $\cL$ be the generator of the uniformly continuous quantum Markov semigroup on $B(H)$ associated with $\mathcal E$. By \Cref{thm:CE_KMS}, there exist $k\in B(H)$ and a KMS-symmetric completely positive map $\Psi\colon B(H)\to B(H)$ such that $\cL(x)=k^\ast x+xk-\Psi(x)$ for all $x\in B(H)$. Let $\Psi(x)=\sum_{j\in I}V_j^*xV_j$ be a Kraus decomposition of $\Psi$. In particular, this means that the sum $\sum_{j\in I}V_j^\ast V_j$ converges in strong operator topology. Note that $\norm{V_j}^2=\norm{V_j^\ast V_j}\leq \norm{\Psi(1)}=\norm{\Psi}$.

The first step is to show that $V_j\in\dom(\sigma^\phi_{i/2})$ and $\norm{\sigma^\phi_{i/2}(V_j)}\leq \sqrt{\norm{\Psi}}$ for each $j\in I$. As before, let $p_n=\chi_{[1/n,n]}(\rho)$ and consider the map $\Psi_n=p_n\Psi(\cdot)p_n$ on $B(p_nH)$. It has Kraus decomposition $\Psi_n(x)=\sum_{j\in I}(p_n V_j p_n)^\ast x(p_n V_j p_n)$.

By KMS symmetry,
\begin{align*}
    \Tr(\Psi_n(x)\rho^{1/2}y\rho^{1/2})&=\sum_j\Tr(x\rho^{1/2}p_nV_j^\ast y V_j p_n\rho^{1/2})\\
    &=\sum_j\Tr(\sigma^\phi_{i/2}(p_n V_j p_n) x \sigma^\phi_{i/2}(p_n V_j p_n)^\ast\rho^{1/2}y\rho^{1/2})
\end{align*}
for $x,y\in B(p_nH)$. In particular,
\begin{equation*}
    \norm{\sigma^\phi_{i/2}(p_n V_j p_n)}^2=\norm{\sigma^\phi_{i/2}(p_n x p_n)\sigma^\phi_{i/2}(p_n x p_n)^\ast}\leq \norm{\Psi_n(1)}\leq \norm{\Psi}.
\end{equation*}
Since $p_n V_j p_n\to V_j$ in weak$^\ast$ topology and $\sigma^\phi_{i/2}$ is weak$^\ast$ closed by \cite[Theorem 2.4]{CZ76}, it follows that $V_j\in \dom(\sigma^\phi_{i/2})$ and $\norm{\sigma^\phi_{i/2}(V_j)}\leq \sqrt{\norm{\Psi}}$. Since $\dom(\sigma^\phi_{i/4})\subset \dom(\sigma^\phi_{i/2})$ by definition, we conclude that $V_j\in \dom(\sigma^\phi_{i/4})$ and the Phragmen--Lindelöf theorem implies $\norm{\sigma^\phi_{i/4}(V_j)}\leq \max\{\norm{V_j},\norm{\sigma^\phi_{i/2}(V_j)}\}\leq \sqrt{\norm{\Psi}}$.

Now define $V_j^{+}=e^{i\pi/4}V_j+e^{-i\pi/4}\sigma^\phi_{i/2}(V_j)^*$ and $V_j^-=e^{-i\pi/4}V_j+e^{i\pi/4}\sigma^\phi_{i/2}(V_j)^*$. A direct computation shows
\begin{align*}
    V_j^{+*}xV_j^{+}+V_j^{-*}xV_j^-
    =&2V_j^*xV_j+2\sigma_{i/2}(V_j)x\sigma_{i/2}(V_j)^*
\end{align*}
for all $x\in B(H)$.

Note that $x\mapsto \sigma^\phi_{i/2}(V_j)x\sigma^\phi_{i/2}(V_j)^*$ is the KMS-adjoint of the map $x\mapsto V_j^*xV_j$ with respect to the state $\phi$. As a result,
\begin{equation*}
    \sum_{j\in I}V_j^{+*}xV_j^++V_j^{-*}xV_j^-=4\Psi(x)
\end{equation*}
Therefore, we can assume without loss of generality that the operator $V_j$ satisfies $V_j=\sigma_{i/2}^\phi(V_j)^*$ for every $j\in I$.

Let $x\in \rho^{1/4} B(H)\rho^{1/4}$. First, we calculate that
\begin{align*}
    \norm{\rho^{1/4}[\sigma^{\phi}_{i/4}(V_j),x]\rho^{1/4}}^2=&\Tr\left(\rho^{1/2}[\sigma^{\phi}_{i/4}(V_j),x]^*\rho^{1/2}[\sigma^{\phi}_{i/4}(V_j),x]\right)\\
    =&\Tr\big(\rho^{1/2}x^*\sigma^{\phi}_{i/4}(V_j)^*\rho^{1/2}\sigma^{\phi}_{i/4}(V_j)x+\rho^{1/2}\sigma^{\phi}_{i/4}(V_j)^*x^*\rho^{1/2}x\sigma^{\phi}_{i/4}(V_j)\\
    &-\rho^{1/2}\sigma^{\phi}_{i/4}(V_j)^*x^*\rho^{1/2}\sigma^{\phi}_{i/4}(V_j)x-\rho^{1/2}x^*\sigma^{\phi}_{i/4}(V_j)^*\rho^{1/2}x\sigma^{\phi}_{i/4}(V_j)\big)\\
    =&\Tr\big(\rho^{1/2}x^*\rho^{1/4}V_j^*V_j\rho^{1/4}x+\sigma^{\phi}_{i/2}(V_j)\sigma^{\phi}_{-i/2}(V_j^*)\rho^{1/4}x^*\rho^{1/2}x\rho^{1/4}\big)\\
    &-\rho^{1/2}V_j^*\rho^{-1/4}x^*\rho^{1/4}V_j\rho^{1/2}\rho^{-1/4}x\rho^{1/4}\\
    &-\rho^{1/2}\rho^{1/4}x^*\rho^{-1/4}\rho^{1/2}V_j^*\rho^{1/4}x\rho^{-1/4}V_j\big).
\end{align*}
Taking the sum over all $j\in I$ and using that $k^*+k=\sum_{j\in I}V_j^*V_j=\sum_{j\in I}\sigma^{\phi}_{i/2}(V_j)\sigma^{\phi}_{-i/2}(V_j^*)$, we find that
\begin{align*}
    \sum_{j\in I}\norm{\rho^{1/4}[\sigma^{\phi}_{i/4}(V_j),x]\rho^{1/4}}^2=&\Tr\big(\rho^{1/2}x^*\rho^{1/4}(k^*+k)\rho^{1/4}x+(k^*+k)\rho^{1/4}x^*\rho^{1/2}x\rho^{1/4}\big)\\
    &-\rho^{1/2}\Phi(\rho^{-1/4}x^*\rho^{1/4})\rho^{1/2}\rho^{-1/4}x\rho^{1/4}\\
    &-\rho^{1/2}\rho^{1/4}x^*\rho^{-1/4}\rho^{1/2}\Phi(\rho^{1/4}x\rho^{-1/4})\big)\\
    =&\Tr\left(\rho^{1/2}(k^*\rho^{-1/4}x^*\rho^{1/4}+\rho^{-1/4}x^*\rho^{1/4}k)\rho^{1/2}\rho^{-1/4}x\rho^{1/4}\right)\\
    &-\Tr\left(\rho^{1/2}\Phi(\rho^{-1/4}x^*\rho^{1/4})\rho^{1/2}\rho^{-1/4}x\rho^{1/4}\right)\\
    &+\Tr\left(\rho^{1/2}\rho^{1/4}x\rho^{-1/4}\rho^{1/2}(k^*\rho^{1/4}x\rho^{-1/4}+\rho^{1/4}x\rho^{-1/4}k)\right)\\
    &-\Tr\left(\rho^{1/2}\rho^{1/4}x^*\rho^{-1/4}\rho^{1/2}\Phi(\rho^{1/4}x\rho^{-1/4})\right)\\
    =&\Tr\left(\rho^{1/2}\cL(\rho^{-1/4}x^*\rho^{1/4})\rho^{1/2}\rho^{-1/4}x\rho^{1/4}\right)\\
    &+\Tr\left(\rho^{1/2}\rho^{1/4}x^*\rho^{-1/4}\rho^{1/2}\cL(\rho^{1/4}x\rho^{-1/4})\right).
\end{align*}

Consequently, 
\begin{align*}
    \sum_{j\in I}\norm{\rho^{1/4}[\sigma^{\phi}_{i/4}(V_j),\rho^{it}x\rho^{-it}]\rho^{1/4}}^2=&\Tr\left(\rho^{1/2}\cL(\rho^{-1/4+it}x^*\rho^{1/4-it})\rho^{1/2}\rho^{-1/4+it}x\rho^{1/4-it}\right)\\
    &+\Tr\left(\rho^{1/2}\rho^{1/4+it}x^*\rho^{-1/4-it}\rho^{1/2}\cL(\rho^{1/4+it}x\rho^{-1/4-it})\right).
\end{align*}

By the KMS-symmetry of $\cL$ and Lemma \ref{lem:appl_residue_thm}, this implies that
\begin{align*}
	&\int_{\IR}\sum_{j\in I}\norm{V_j \rho^{it+1/4}x\rho^{-it+1/4}-\rho^{it+1/4}x\rho^{-it+1/4}V_j^\ast}_2^2\frac{2\,dt}{\cosh 2\pi t}\\
    &\qquad=\int_{\IR}\sum_{j\in I}\norm{\rho^{1/4}[\sigma^{\phi}_{i/4}(V_j),\rho^{it}x\rho^{-it}]\rho^{1/4}}_2^2\frac{2\,dt}{\cosh 2\pi t}\\
    &\qquad=\Tr(\rho^{1/2}x^*\rho^{1/2}\cL(x))\\
    &\qquad=\mathcal E(\rho^{1/4}x\rho^{1/4})
\end{align*}
for all $x\in \rho^{1/4}B(H)\rho^{1/4}$. Since $\rho^{1/2}B(H)\rho^{1/2}$ is dense in $S^2(H)$ and $\mathcal E$ is bounded, this settles the first implication.

For the converse, consider the Hilbert space $\cH=L^2(\IR,\mu;\bigoplus_{j\in I}S^2(H))$ with bimodule actions of $B(H)$ given by $(x\eta y)(t)=(\sigma^\phi_t(x)\eta_j(t)\sigma^\phi_t(y))_{j\in I}$ and $(\cJ \eta)(t)=(\eta_j(t)^\ast)_{j\in I}$, $(\cV_s\eta)(t)=\eta(s+t)$. One can check directly that $(\cH,\cJ)$ is a self-dual $B(H)$-$B(H)$ correspondence and $\cV_s$ satisfies $\cV_s\cJ=\cJ \cV_s$ and $\cV_s(x\eta y)=\sigma^\phi_s(x)(\cV_s\eta)\sigma^\phi_s(y)$.

Let 
\begin{equation*}
    \delta\colon S^2(H)\to \cH,\,(\delta A)(t)=(V_j \rho^{it}A\rho^{-it}-\rho^{it}A\rho^{-it}V_j^\ast)_{j\in I}.
\end{equation*}
Since $\sum_j V_j^\ast V_j$ converges in strong operator topology, we have
\begin{align*}
    \int_{\IR}\sum_{j}\norm{V_j \rho^{it}A\rho^{-it}-\rho^{it}A\rho^{-it}V_j^\ast}_2^2\frac{2\,dt}{\cosh 2\pi t}\leq 4\left\lVert\sum_{j\in I} V_j^\ast V_j\right\rVert\norm{A}_2^2,
\end{align*}
so that $\delta$ does indeed take values in $\cH$ and is bounded. It is easy to see that $\delta\circ \Delta_\phi^{is}=\cV_s\circ\delta$ and $\delta\circ J=\cJ\circ \delta$. Finally, if $x,y\in B(H)$ and $t\in \IR$, then
\begin{align*}
    \delta(x\rho^{1/2}y)(t)&=(V_j\sigma^\phi_t(x)\rho^{1/2}\sigma^\phi_t(y)-\sigma^\phi_t(x)\rho^{1/2}\sigma^\phi_t(y) V_j^\ast)_{j\in I}\\
    &=((V_j \Delta_\phi^{it}x\rho^{1/2}-(\Delta_\phi^{it}x\rho^{1/2})V_j^\ast)y+\sigma^\phi_t(x)\rho^{1/2}(V_j^\ast \sigma^\phi_t(y)-\sigma^\phi_t(y)V_j^\ast))_{j\in I}\\
    &=(\delta(x\rho^{1/2})y)(t)+(\sigma^\phi_t(x) (V_j \Delta_\phi^{it}\rho^{1/2}y-(\Delta_\phi^{it}\rho^{1/2}y)V_j))_{j\in I}\\
    &=(\delta(x\rho^{1/2})y)(t)+(x\delta(\rho^{1/2}y))(t).
\end{align*}
Now it follows from \Cref{thm:char_bdd} that
\begin{equation*}
    \mathcal E(A)=\sum_{j\in I}\int_\IR\norm{V_j\rho^{it}A\rho^{-it}-\rho^{it}A \rho^{-it}V_j^\ast}_2^2\frac{2\,dt}{\cosh 2\pi t},\quad A\in S^2(H),
\end{equation*}
defines a quantum Dirichlet form.
\end{proof}

\section{\texorpdfstring{The Tomita algebra $\mathfrak A_{\mathcal E}$}{The Tomita algebra A_E}}\label{sec:Tomita_alg}

In this section we explore some consequences of the characterization of bounded quantum Dirichlet forms from \Cref{sec:char_bdd} for the domains of unbounded quantum Dirichlet forms. In particular, we introduce the space $\mathfrak A_{\mathcal E}$ for a quantum Dirichlet form $\mathcal E$, show that it is a Tomita algebra and a form core for $\mathcal E$. Moreover, we show that the modular group restricts to a strongly continuous group on the domain of $\mathcal E$ such that the square root of its analytic generator is accretive with respect to the form inner product.

\begin{lemma}\label{lem:bound_mod_grp_form}
    If $\mathcal E$ is a conservative completely Dirichlet form and $a\in L^2(M)$, then $\mathcal E(\Delta_\phi^{it}a)\leq e^{2\pi\abs{t}}\mathcal E(a)$ for all $t\in\IR$. In particular, $\dom(\mathcal E)$ is invariant under $\Delta_\phi^{it}$ for all $t\in \IR$.
\end{lemma}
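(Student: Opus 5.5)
First treat the bounded case and then pass to general forms by Moreau regularization. Suppose $\mathcal E$ is a bounded quantum Dirichlet form with generator $L$. By \Cref{thm:char_bdd} there is a quadruple $(\cH,\cJ,(\cV_s)_{s\in\IR},\delta)$ with $L=\delta^\ast\delta$ and $\delta\circ\Delta_\phi^{is}=\cV_s\circ\delta$. The construction in its proof takes $\cH$ inside $L^2(\IR,\mu;\tilde\cH)$, with $\cV_s$ acting as translation, so \Cref{rem:norm_V_s} (that is, \Cref{prop:trans_weighted_L2}) gives $\norm{\cV_s}\le e^{\pi\abs s}$. Since the quadruple is unique up to a unitary intertwiner, this bound holds for any realization. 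It follows that
\begin{equation*}
    \mathcal E(\Delta_\phi^{it}a)=\norm{\delta(\Delta_\phi^{it}a)}^2=\norm{\cV_t\delta(a)}^2\le e^{2\pi\abs t}\norm{\delta(a)}^2=e^{2\pi\abs t}\mathcal E(a)
\end{equation*}
for all $a\in L^2(M)$ and $t\in\IR$.

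For a general quantum Dirichlet form $\mathcal E$ and $\epsilon>0$, \Cref{lem:Moreau_reg_Dir} shows that the Moreau regularization $\mathcal E_\epsilon$ is a bounded quantum Dirichlet form. The bounded case therefore gives $\mathcal E_\epsilon(\Delta_\phi^{it}a)\le e^{2\pi\abs t}\mathcal E_\epsilon(a)\le e^{2\pi\abs t}\mathcal E(a)$. Letting $\epsilon\searrow0$ and using $\mathcal E_\epsilon(\xi)\nearrow\mathcal E(\xi)$ for every $\xi$ (from \Cref{sec:quad_forms}) yields $\mathcal E(\Delta_\phi^{it}a)\le e^{2\pi\abs t}\mathcal E(a)$. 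This holds even when $\mathcal E(a)=\infty$, where it is trivial. The invariance of $\dom(\mathcal E)$ under $\Delta_\phi^{it}$ is then immediate.

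The only step needing care is the bound $\norm{\cV_s}\le e^{\pi\abs s}$, because the quadruple in \Cref{thm:char_bdd} is only determined up to isomorphism. One could worry that the abstract group satisfies only the weaker bound $ce^{\omega\abs s}$. This is why I would use the concrete model from the proof, in which $\cH$ is a closed translation-invariant subspace of $L^2(\IR,\mu;\tilde\cH)$. The estimate of \Cref{prop:trans_weighted_L2} then applies directly to the restriction of $\cV_s$ to that subspace.
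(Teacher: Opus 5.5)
Your proof is correct and is essentially the paper's argument. The paper also reduces to the bounded case via \Cref{lem:Moreau_reg_Dir} and the monotone convergence $\mathcal E_\epsilon\nearrow\mathcal E$. In the bounded case it uses the integral representation of \Cref{thm:int_rep_form} together with the estimate $\frac{\cosh 2\pi s}{\cosh 2\pi(s-t)}\le e^{2\pi\abs t}$, which is exactly what underlies the bound $\norm{\cV_t}\le e^{\pi\abs t}$ that you invoke through the translation model of \Cref{thm:char_bdd}. The only cosmetic difference is that the paper works on $M\Omega_\phi\cap M^\prime\Omega_\phi$ and extends by density, whereas the bounded $\delta$ from \Cref{thm:char_bdd} gives you the bound on all of $L^2(M)$ at once. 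There is no circularity, since neither \Cref{thm:char_bdd} nor \Cref{prop:trans_weighted_L2} uses this lemma.
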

\begin{proof}
    First assume that $\mathcal E$ is bounded and $a\in M\Omega\cap M^\prime\Omega$. By \Cref{thm:int_rep_form}, there exists an $M$-$M$ correspondence $\mathcal H$ and a linear map $\delta\colon M\Omega\cap M^\prime\Omega\to \mathcal H$ such that 
    \begin{equation*}
        \mathcal E(\Delta_\phi^{it}a)=\int_{\IR}\norm{\delta(\Delta_\phi^{i(s+t)}a)}_{\mathcal H}^2\frac{2\,ds}{\cosh2\pi s}.
    \end{equation*}
    Since $\frac{\cosh 2\pi s}{\cosh 2\pi(s-t)}\in [e^{-2\pi\abs{t}},e^{2\pi\abs t}]$ for all $s,t\in \IR$, we have
    \begin{align*}
        \mathcal E(\Delta_\phi^{it}a)&=\int_\IR\norm{\delta(\Delta_\phi^{is}a)}_{\mathcal H}^2\frac{2\,ds}{\cosh 2\pi(s-t)}\\
        &\leq e^{2\pi\abs t}\int_{\IR}\norm{\delta(\Delta_\phi^{is}a)}_{\mathcal H}^2\frac{2\,ds}{\cosh 2\pi s}\\
        &=e^{2\pi\abs t}\mathcal E(a).
    \end{align*}
    The result for general $a\in L^2(M)$ follows since $M\Omega\cap M^\prime \Omega$ is dense in $L^2(M)$ and $\mathcal E$ is bounded.

    Now if $\mathcal E$ is not necessarily bounded, we can use that the Moreau regularization $\mathcal E^{(\epsilon)}$ is a bounded completely Dirichlet form for all $\epsilon>0$ to obtain
    \begin{equation*}
        \mathcal E(\Delta_\phi^{it} a)=\sup_{\epsilon>0}\mathcal E_\epsilon(\Delta_\phi^{it}a)\leq e^{2\pi\abs{t}}\sup_{\epsilon>0}\mathcal E_\epsilon(a)=e^{2\pi \abs{t}}\mathcal E(a)
    \end{equation*}
    for all $a\in L^2(M)$ and $t\in\IR$.
\end{proof}

\begin{lemma}\label{lem:res_mod_group_form}
    If $\mathcal E$ is a conservative completely Dirichlet form, then $(\Delta_\phi^{it})_{t\in\IR}$ leaves $\dom(\mathcal E)$ globally invariant and its restriction to $(\dom(\mathcal E),\norm\cdot_{\mathcal E})$ is a strongly continuous group.
\end{lemma}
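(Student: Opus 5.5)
Invariance is already contained in \Cref{lem:bound_mod_grp_form}. The same lemma also shows that each restriction $U_t=\Delta_\phi^{it}|_{\dom(\mathcal E)}$ is bounded with respect to the form norm, with bound $e^{\pi\abs t}$: indeed $\norm{U_ta}_{\mathcal E}^2=\norm a^2+\mathcal E(\Delta_\phi^{it}a)\le e^{2\pi\abs t}\norm a_{\mathcal E}^2$. The group law and $U_0=\id$ are inherited from $(\Delta_\phi^{it})$. What remains is strong continuity in $\norm\cdot_{\mathcal E}$. Since $(\dom(\mathcal E),\norm\cdot_{\mathcal E})$ is a Hilbert space, hence reflexive, it suffices by \cite[Theorem I.5.8]{EN00} (as recalled in the preliminaries on analytic generators) to prove weak continuity: $U_ta\to a$ weakly in the form Hilbert space as $t\to0$.

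To prove weak continuity, I will use the local uniform bound $\sup_{\abs t\le1}\norm{U_t}\le e^{\pi}$. With this bound, it is enough to test against a dense subset of the dual. I will identify the dual with the form Hilbert space via the form inner product $\langle b,a\rangle_{\mathcal E}=\langle b,a\rangle+\mathcal E(b,a)$. Take $b=(1+L)^{-1}c$ with $c\in L^2(M)$, where $L$ is the generator of $\mathcal E$. These vectors form $\dom(L)$, which is dense in $(\dom(\mathcal E),\norm\cdot_{\mathcal E})$. For such $b$,
\begin{equation*}
\langle b,U_ta\rangle_{\mathcal E}=\langle (1+L)b,\Delta_\phi^{it}a\rangle=\langle c,\Delta_\phi^{it}a\rangle\to\langle c,a\rangle=\langle b,a\rangle_{\mathcal E}
\end{equation*}
by strong continuity of $\Delta_\phi^{it}$ on $L^2(M)$. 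The bound on $\norm{U_t}$ then gives $\langle b,U_ta\rangle_{\mathcal E}\to\langle b,a\rangle_{\mathcal E}$ for all $b$ in the form domain, which is weak continuity at $0$. The group property transfers this to every point.

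An alternative that avoids the Engel--Nagel result is to argue directly. Weak convergence together with $\limsup_{t\to0}\norm{U_ta}_{\mathcal E}\le\norm a_{\mathcal E}$ yields norm convergence, and $e^{2\pi\abs t}\to1$ gives exactly this $\limsup$ bound. So $\norm{U_ta-a}_{\mathcal E}^2=\norm{U_ta}^2_{\mathcal E}-2\Re\langle a,U_ta\rangle_{\mathcal E}+\norm a^2_{\mathcal E}\to0$. I will present this direct argument since it is self-contained. The only subtle point is the identification of weak limits via the density of $\dom(L)$, which is standard.
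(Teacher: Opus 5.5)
Your proposal is correct and follows the paper's proof. You take invariance and the bound $\norm{\Delta_\phi^{it}a}_{\mathcal E}\le e^{\pi\abs t}\norm a_{\mathcal E}$ from \Cref{lem:bound_mod_grp_form}, test weak continuity against $b\in\dom(L)$ via $\mathcal E(b,\Delta_\phi^{it}a)=\langle Lb,\Delta_\phi^{it}a\rangle$ and the strong continuity of $(\Delta_\phi^{it})$ on $L^2(M)$, and then upgrade to strong continuity. Your self-contained upgrade, using weak convergence together with $\limsup_{t\to 0}\norm{U_ta}_{\mathcal E}\le\norm a_{\mathcal E}$, is a valid substitute for \cite[Theorem I.5.8]{EN00} and also spells out the density step that the paper leaves implicit.
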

\begin{proof}
    We have already seen in \Cref{lem:bound_mod_grp_form} that $\dom(\mathcal E)$ is globally invariant under $(\Delta_\phi^{it})_{t\in\IR}$ and $\norm{\Delta_\phi^{it}a}_{\mathcal E}\leq e^{\pi\abs{t}}\norm{a}_{\mathcal E}$ for all $a\in\dom(\mathcal E)$ and $t\in\IR$. Let $L$ be the positive self-adjoint operator associated with $\mathcal E$. To show strong continuity, it is therefore \cite[Theorem I.5.8]{EN00} enough to prove that $t\mapsto \mathcal E(b,\Delta_\phi^{it}a)$ is continuous at $t=0$ for all $b\in \dom(L)$. This follows immediately from $\mathcal E(b,\Delta_\phi^{it}a)=\langle Lb,\Delta_\phi^{it}a\rangle_2$ and the strong continuity of $(\Delta_\phi^{it})$ with respect to the $L^2$-norm.
\end{proof}

\begin{definition}
    If $\mathcal E$ is a conservative completely Dirichlet form, let $\mathfrak A_{\mathcal E}$ be the subset of $L^2(M)$ consisting of all elements $x\Omega_\phi$ for which $x\in M$ is analytic for $\sigma^\phi$, $\sigma^\phi_z(x)\Omega_\phi\in \dom(\mathcal E)$ for all $z\in\IC$ and the map $z\mapsto \sigma^\phi_z(x)\Omega_\phi$ is entire analytic with respect to the form norm $\norm\cdot_{\mathcal E}$. We call $\mathfrak A_{\mathcal E}$ the \emph{Tomita algebra associated with $\mathcal E$}. Moreover, let $A_{\mathcal E}$ denote the norm closure of $\{x\in M\mid x\Omega_\phi\in \mathfrak A_{\mathcal E}\}$.
\end{definition}

\begin{remark}
    The name ``Tomita algebra associated with $\mathcal E$'' is justified by \Cref{cor:Tomita_subalgebra} below. Note that in the GNS-symmetric case, it suffices to require that $z\mapsto \sigma^\phi_z(x)\Omega_\phi$ is analytic for the $L^2$-norm by \cite[Lemma 6.6]{Wir22b}. Thus the definition of $\mathfrak A_{\mathcal E}$ from \cite[Definition 6.4]{Wir22b} coincides with the one given here in the GNS-symmetric case.
\end{remark}

\begin{proposition}\label{prop:Tomita_form_dense}
    If $\mathcal E$ is a quantum Dirichlet form, then $\mathfrak A_{\mathcal E}$ is a form core for $\mathcal E$.
\end{proposition}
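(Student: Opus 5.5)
Our plan is to produce elements of $\mathfrak A_{\mathcal E}$ by first regularizing along the modular group with a Gaussian, using that $(\Delta_\phi^{it})$ restricts to a strongly continuous group $(V_t)$ on $(\dom(\mathcal E),\norm\cdot_{\mathcal E})$ (\Cref{lem:res_mod_group_form}). We then combine this with a Markov-type truncation that produces vectors of the form $x\Omega_\phi$ with $x\in M$.

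\textbf{Step 1 (bounded vectors are a core).} Since $\mathcal E$ is a quantum Dirichlet form, the associated semigroup $T_t=e^{-tL}$ is KMS-symmetric, positivity-preserving and fixes $\Omega_\phi$. Hence $T_t$ maps $i_\phi(M)=\Delta_\phi^{1/4}M\Omega_\phi$ into itself, since $T_t\circ i_\phi=i_\phi\circ\Phi_t$. Every vector of the form $T_t\xi$ lies in $\dom(L)\subset\dom(\mathcal E)$, and $T_t\xi\to\xi$ in form norm as $t\to0$ for $\xi\in\dom(\mathcal E)$. Because $i_\phi(M)$ is dense in $L^2(M)$ and $T_t$ is bounded from $L^2$ into $(\dom(\mathcal E),\norm\cdot_{\mathcal E})$, the set $D=\{T_t i_\phi(y)\mid t>0,\,y\in M\}$ spans a form core. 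Each element of $D$ has the form $\Delta_\phi^{1/4}x\Omega_\phi$ with $x=\Phi_t(y)\in M$. This would not directly give $M\Omega_\phi$, so I would instead use $T_t$ applied to $x\Omega_\phi$. Since $\Phi_t$ commutes with $\sigma^\phi_{-i/4}$ only in a weak sense, it is cleaner to work with $\mathcal D_0=\operatorname{span}\{T_t(x\Omega_\phi)\}$ and to show that $T_t(x\Omega_\phi)$ is approximable in form norm by elements of $M\Omega_\phi\cap\dom(\mathcal E)$. This is the main obstacle, discussed below.

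\textbf{Step 2 (Gaussian smoothing).} Given $a=x\Omega_\phi\in\dom(\mathcal E)$ with $x\in M$, set
\begin{equation*}
    x_n=\sqrt{\tfrac n\pi}\int_\IR e^{-ns^2}\sigma^\phi_s(x)\,ds .
\end{equation*}
Then $x_n$ is entire analytic for $\sigma^\phi$, and $x_n\Omega_\phi=\sqrt{n/\pi}\int e^{-ns^2}V_s a\,ds$. By \Cref{lem:Haagerup_approx_groups}, applied to the reflexive space $(\dom(\mathcal E),\norm\cdot_{\mathcal E})$ with the bound $\norm{V_s}\le e^{\pi\abs s}$, this vector lies in $\bigcap_z\dom(V_z)$, with
\begin{equation*}
    V_z(x_n\Omega_\phi)=\sqrt{\tfrac n\pi}\int e^{-n(s-z)^2}V_sa\,ds,
\end{equation*}
which is entire in form norm. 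This form-norm analytic extension agrees with $\sigma^\phi_z(x_n)\Omega_\phi$ because both are analytic in $L^2$ and coincide on $\IR$. Hence $x_n\Omega_\phi\in\mathfrak A_{\mathcal E}$, and $x_n\Omega_\phi\to a$ in form norm. It therefore suffices to prove that $M\Omega_\phi\cap\dom(\mathcal E)$ is a form core.

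\textbf{Step 3 (the obstacle: $M\Omega_\phi\cap\dom(\mathcal E)$ is a core).} My first attempt would mimic Davies--Lindsay. For $\xi\in\dom(\mathcal E)$ with $J\xi=\xi$, truncate using the Dirichlet property: the map $\xi\mapsto\xi\wedge\lambda\Omega_\phi$ (the projection onto $\{\eta\le\lambda\Omega_\phi\}$, realized via $(\cdot)_+$ as $\lambda\Omega_\phi-(\lambda\Omega_\phi-\xi)_+$) is a contraction in $\mathcal E$, because $\mathcal E(\Omega_\phi)=0$ and $\mathcal E((\cdot)_+)\le\mathcal E(\cdot)$. Applied on both sides, it yields vectors $\eta$ with $-\lambda\Omega_\phi\le\eta\le\lambda\Omega_\phi$, and such vectors lie in $\Delta_\phi^{1/4}M_{sa}\Omega_\phi$ by Haagerup's order characterization. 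Their form norms are bounded by $\norm\xi_{\mathcal E}$, and they converge to $\xi$ in $L^2$ as $\lambda\to\infty$. A bounded sequence in the Hilbert space $\dom(\mathcal E)$ converging in $L^2$ converges weakly in form norm, and Mazur's lemma then upgrades this to norm convergence of convex combinations. This shows that $i_\phi(M)\cap\dom(\mathcal E)$ is a core.

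To pass from $\Delta_\phi^{1/4}M\Omega_\phi$ to $M\Omega_\phi$, I would smooth once more. For $b=\Delta_\phi^{1/4}y\Omega_\phi$, the Gaussian average $b_n$ has an entire extension $V_z b_n$ in form norm by Step 2, and $V_{i/4}b_n=\Delta_\phi^{-1/4}b_n=y_n\Omega_\phi$ with $y_n$ analytic. Moreover, $y_n\Omega_\phi$ lies in $\dom(\mathcal E)$ and, as $n\to\infty$, approximates $\Delta_\phi^{-1/4}b$ rather than $b$. To fix this, I would apply $V_{-i/4}$ instead: the vector $V_{-i/4}$ of a Gaussian-smoothed element of $M\Omega_\phi$-type lies in $\dom(\mathcal E)$, and chaining these observations shows that $\{\sigma^\phi_{i/4}\text{-shifts}\}$ of the analytic elements from $i_\phi(M)$ belong to $\mathfrak A_{\mathcal E}$. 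Since $\Delta_\phi^{-1/4}$ maps Gaussian-smoothed vectors of $i_\phi(M)\cap\dom(\mathcal E)$ into analytic elements of $M\Omega_\phi$, and these vectors form a $(V_s)$-invariant dense subspace of $\dom(V_{i/4})$ in $\dom(\mathcal E)$, \Cref{lem:core_analytic_generator} shows that they are a core for $V_{i/4}$. Their images under $V_{i/4}$ are therefore dense in $\operatorname{ran}V_{i/4}$. Finally, $\operatorname{ran}V_{i/4}\supset\dom(V_{-i/4})$ is dense in $\dom(\mathcal E)$, which completes the argument. Making this last density bookkeeping precise is where I expect the real work to lie.
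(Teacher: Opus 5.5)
Your strategy is the paper's: use order truncation to show that $\Delta_\phi^{1/4}M\Omega_\phi\cap\dom(\mathcal E)$ is a form core, then regularize with a Gaussian along the restricted group $(V_s)_{s\in\IR}$ on $(\dom(\mathcal E),\norm\cdot_{\mathcal E})$. Your Step 2 is correct. Your final density argument also goes through. The span $D$ of Gaussian averages of elements of $i_\phi(M)\cap\dom(\mathcal E)$ is dense, $(V_s)$-invariant and contained in $\dom(V_{i/4})$, so it is a core for $V_{i/4}$ by \Cref{lem:core_analytic_generator}. Then $V_{i/4}(D)\subset\mathfrak A_{\mathcal E}$ is dense in $\operatorname{ran}V_{i/4}=\dom(V_{-i/4})$, which is dense in $\dom(\mathcal E)$.

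One step fails as written, though. If you compose the one-sided truncations $\xi\mapsto\lambda\Omega_\phi-(\lambda\Omega_\phi-\xi)_+$ and $\eta\mapsto-\lambda\Omega_\phi+(\lambda\Omega_\phi+\eta)_+$, you do \emph{not} land in the order interval $[-\lambda\Omega_\phi,\lambda\Omega_\phi]$. The second step can destroy the upper bound, because in a noncommutative standard form $\zeta\le c$ with $c\geq 0$ does not imply $\zeta_+\le c$. This already fails for $2\times 2$ matrices with $\Omega_\phi=\rho^{1/2}$.

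What you need instead is that the metric projection onto the two-sided set $\{b\mid Jb=b,\ \lambda\Omega_\phi\pm b\in L^2_+(M)\}$ does not increase $\mathcal E$. The paper takes this from \cite[Proposition 4.5]{Cip97}. Alternatively, $e^{-tL}$ commutes with $J$, preserves $L^2_+(M)$ and fixes $\Omega_\phi$, so it leaves this set invariant, and Ouhabaz's invariance criterion then gives the contraction property of the projection. With that replacement, the rest of your truncation paragraph is fine.

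The detour through $V_{i/4}$ is unnecessary, and you already wrote down the key identity. If $b=\Delta_\phi^{1/4}y\Omega_\phi\in\dom(\mathcal E)$, its Gaussian average is $b_n=\Delta_\phi^{1/4}y_n\Omega_\phi=\sigma^\phi_{-i/4}(y_n)\Omega_\phi$. This is itself of the form $x\Omega_\phi$ with $x=\sigma^\phi_{-i/4}(y_n)$ entire analytic, and $\sigma^\phi_z(x)\Omega_\phi=V_zb_n$ is entire in form norm. Hence $b_n\in\mathfrak A_{\mathcal E}$ and $b_n\to b$ in form norm, which finishes the proof directly; this is exactly the paper's argument. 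You can drop Step 1 and the reduction to $M\Omega_\phi\cap\dom(\mathcal E)$ at the end of Step 2.
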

\begin{proof}
    If $a\in L^2(M)$ and $Ja=a$, let $a_n$ be the projection onto the closed convex subset $\{b\in L^2(M)\mid Jb=b,\,n\Omega_\phi\pm b\in L^2_+(M)\}\subset \Delta_\phi^{1/4}M\Omega_\phi$. We have $a_n\to a$ in $L^2(M)$ and $\mathcal E(a_n)\leq \mathcal E(a)$ by the Dirichlet form property \cite[Proposition 4.5]{Cip97}. It follows that every subnet of $(a_n)$ has a subnet that converges weakly to $a$ in $(\dom(\mathcal E),\norm\cdot_{\mathcal E})$, hence $(a_n)$ itself converges weakly to $a$ in $(\dom(\mathcal E),\norm\cdot_{\mathcal E})$. Together with $\limsup_{n\to\infty}\norm{a_n}_{\mathcal E}\leq \norm{a}_{\mathcal E}$, this implies $\norm{a-a_n}_{\mathcal E}\to 0$.
    
    Since $\dom(\mathcal E)$ is invariant under $J$, it suffices therefore to show that for every $a\in\dom(\mathcal E)\cap \Delta_\phi^{1/4}M\Omega_\phi$ there exists a sequence $(a_n)$ in $\mathfrak A_{\mathcal E}$ such that $a_n\to a$ in form norm.

    Let $x\in M$ such that $a=\Delta_\phi^{1/4}x\Omega_\phi\in\dom(\mathcal E)$ and let $ x_n=\sqrt{\frac n\pi}\int_{\IR}e^{-nt^2}\sigma^\phi_t(x)\,dt$. It follows from the proof of \cite[Lemma 1.3]{Haa75} that $x_n$ is analytic for $\sigma^\phi$ and $x_n\Omega_\phi\to x\Omega_\phi$, $x_n^\ast \Omega_\phi\to x^\ast\Omega_\phi$. By Young's inequality, we obtain
    \begin{align*}
        \norm{\Delta_\phi^{1/4}(x-x_n)\Omega_\phi}^2
        &=\langle (x-x_n)\Omega_\phi,\Delta_\phi^{1/2}(x-x_n)\Omega_\phi\rangle\\
        &\leq \frac 1 2\norm{(x-x_n)\Omega_\phi}^2+\frac 1 2\norm{\Delta_\phi^{1/2}(x-x_n)\Omega_\phi}^2\\
        &=\frac 1 2\norm{(x-x_n)\Omega_\phi}^2+\frac 1 2\norm{(x^\ast-x_n^\ast)\Omega_\phi}^2\\
        &\to 0.
    \end{align*}
    
    Let $V_t=\Delta_\phi^{it}|_{\dom(\mathcal E)}$. By \Cref{lem:res_mod_group_form}, the family $(V_t)_{t\in\IR}$ is a strongly continuous group on $(\dom(\mathcal E),\norm\cdot_{\mathcal E})$. We have
    \begin{equation*}
        \Delta_\phi^{1/4}x_n\Omega_\phi=\sqrt\frac n\pi\int_\IR e^{-nt^2}V_t(\Delta_\phi^{1/4}x\Omega_\phi)\,dt\in \bigcap_{z\in\IC}\dom(V_z)
    \end{equation*}
    by \Cref{lem:Haagerup_approx_groups}. Thus $\Delta_\phi^{1/4}x_n\Omega_\phi=\sigma_{-i/4}^\phi(x_n)\Omega_\phi\in \mathfrak A_{\mathcal E}$. Finally,
    \begin{align*}
        \norm{\Delta_\phi^{1/4}x\Omega_\phi-\Delta_\phi^{1/4}x_n\Omega_\phi}_{\mathcal E}
        &\leq \sqrt\frac n\pi\int_{\IR}e^{-nt^2}\norm{V_t\Delta_\phi^{1/4}x\Omega_\phi-\Delta_\phi^{1/4}x\Omega_\phi}_{\mathcal E}\,dt\\
        &=\frac 1{\sqrt \pi}\int_\IR e^{-s^2}\norm{V_{s/\sqrt n}\Delta_\phi^{1/4}x\Omega_\phi-\Delta_\phi^{1/4}x\Omega_\phi}_{\mathcal E}\,ds\\
        &\to 0
    \end{align*}
    by the dominated convergence theorem.
\end{proof}

\begin{theorem}\label{thm:analytic_gen_mod_grp_accretive}
    If $\mathcal E$ is a quantum Dirichlet form and $(V_s)_{s\in\IR}$ denotes the restriction of $(\Delta_\phi^{is})_{s\in\IR}$ to $(\dom(\mathcal E),\norm{\cdot}_{\mathcal E})$, then $V_{-i/2}$ is accretive.
\end{theorem}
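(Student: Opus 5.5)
We prove that $\Re\bigl(\langle a,V_{-i/2}a\rangle_2+\mathcal E(a,V_{-i/2}a)\bigr)\geq 0$ for all $a\in\dom(V_{-i/2})$, where the inner product is the form inner product $\langle\cdot,\cdot\rangle_{\mathcal E}=\langle\cdot,\cdot\rangle_2+\mathcal E(\cdot,\cdot)$. We treat the two summands separately. For the $L^2$ part, we use that $V_{-i/2}$ is the restriction of $\Delta_\phi^{1/2}$ to the set of $a\in\dom(\mathcal E)$ for which the analytic extension stays in $\dom(\mathcal E)$. This follows because $\norm\cdot_{\mathcal E}$ dominates the $L^2$ norm, so analyticity in form norm implies analyticity in $L^2$. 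Consequently $\langle a,V_{-i/2}a\rangle_2=\norm{\Delta_\phi^{1/4}a}^2\geq 0$.

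The main task is therefore $\Re\mathcal E(a,V_{-i/2}a)\geq 0$. We first treat bounded $\mathcal E$ and then pass to the unbounded case via the Moreau regularizations $\mathcal E_\epsilon$ of \Cref{lem:Moreau_reg_Dir}.

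\emph{Bounded case.} By \Cref{thm:char_bdd} we write $\mathcal E(b,c)=\langle\delta b,\delta c\rangle_\cH$ with $\delta\circ\Delta_\phi^{is}=\cV_s\circ\delta$ and $\cV_{-i/2}$ accretive. For $a$ in the analytic domain of $\Delta_\phi^{1/2}$ (e.g.\ $a\in\dom(\Delta_\phi^{1/2})$), boundedness of $\delta$ and the identity theorem give $\delta a\in\dom(\cV_{-i/2})$ and $\cV_{-i/2}\delta a=\delta\Delta_\phi^{1/2}a$. Hence
\begin{equation*}
\Re\mathcal E(a,\Delta_\phi^{1/2}a)=\Re\langle\delta a,\cV_{-i/2}\delta a\rangle\geq 0 .
\end{equation*}

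\emph{Unbounded case.} Take $a\in\dom(V_{-i/2})$ and approximate it by $a_n=\sqrt{n/\pi}\int_\IR e^{-ns^2}V_sa\,ds$. By \Cref{lem:Haagerup_approx_groups} and \Cref{lem:res_mod_group_form}, we have $a_n\to a$ and $V_{-i/2}a_n\to V_{-i/2}a$ in form norm. It therefore suffices to handle the $a_n$, which are entire for $(V_z)$ in form norm; in particular $a_n$ and $\Delta_\phi^{1/2}a_n=V_{-i/2}a_n$ lie in $\dom(\mathcal E)$.

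For such $a_n$ the bounded case gives $\Re\mathcal E_\epsilon(a_n,\Delta_\phi^{1/2}a_n)\geq 0$. We then let $\epsilon\to0$, using $\mathcal E_\epsilon(b,c)\to\mathcal E(b,c)$ for $b,c\in\dom(\mathcal E)$. This convergence holds because $\mathcal E_\epsilon=Q_{L(1+\epsilon L)^{-1}}$: in spectral terms $\mathcal E_\epsilon(b,c)=\int\frac{\lambda}{1+\epsilon\lambda}\,d\langle E_\lambda b,c\rangle$, and dominated convergence applies with the bound $\lambda\,\abs{d\langle E_\lambda b,c\rangle}$, which is integrable by Cauchy--Schwarz since $b,c\in\dom(L^{1/2})$.

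The main obstacle is the domain bookkeeping in the bounded case. One must check that the vectors used are genuinely in the domain of the analytic generator of $\cV$ and that the analytic continuation of $s\mapsto\delta\Delta_\phi^{is}a$ equals $\delta$ applied to the continuation of $s\mapsto\Delta_\phi^{is}a$. Since $\delta$ is bounded, this reduces to boundedness and continuity on the closed strip $\{-1/2\le\Im z\le 0\}$ together with the identity theorem. The remaining steps, namely the Gaussian smoothing and the monotone Moreau limit, are routine.
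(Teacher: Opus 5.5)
Your proof is correct and follows essentially the same route as the paper. The $L^2$ part is handled via $V_{-i/2}\subset\Delta_\phi^{1/2}$, the general case is reduced to bounded forms by Moreau regularization, and in the bounded case the bounded derivation $\delta$ from \Cref{thm:char_bdd} intertwines $\Delta_\phi^{1/2}$ with the accretive operator $\cV_{-i/2}$.

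The differences are minor: you use Gaussian averages instead of the core $\mathfrak A_{\mathcal E}$ from \Cref{prop:Tomita_form_dense}, and the identity theorem instead of Uitert's formula. In fact the approximation step is dispensable in both versions, since $\Re\mathcal E_\epsilon(a,\Delta_\phi^{1/2}a)\geq 0$ holds for every $a\in\dom(\Delta_\phi^{1/2})$, and every $a\in\dom(V_{-i/2})$ already satisfies $a,V_{-i/2}a\in\dom(\mathcal E)$.
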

\begin{proof}
    By \Cref{prop:Tomita_form_dense}, the set $\mathfrak A_{\mathcal E}$ is dense in $(\dom(\mathcal E),\norm{\cdot}_{\mathcal E})$ and by definition it is contained in $\dom(V_{-i/2})$. It follows from \Cref{lem:core_analytic_generator} that it is a core for $V_{-i/2}$.
    
    It is clear from the definition that $V_{-i/2}\subset \Delta_\phi^{1/2}$. Since $\langle a,\Delta_\phi^{1/2}a\rangle_2\geq 0$ for all $a\in \dom(\Delta_\phi^{1/2})$, it suffices to show that $\Re\mathcal E(a,V_{-i/2}a)\geq 0$ for all $a\in\mathfrak A_{\mathcal E}$. By polarization, $\mathcal E^{(\epsilon)}(b,c)\to \mathcal E(b,c)$ for all $b,c\in\dom(\mathcal E)$. Therefore we can assume that $\mathcal E$ is bounded.

    Let $(\cH,\cJ,(\cV_s)_{s\in\IR},\delta)$ as in \Cref{thm:char_bdd}. By \cite[Theorem 4.2]{Uit00}, we have
    \begin{equation*}
        V_{-i/2}a=\lim_{n\to\infty}\sqrt\frac n\pi\int_{\IR}e^{-n(s+i/2)^2}\Delta_\phi^{is}a\,ds.
    \end{equation*}
    Since $\delta$ is bounded, this implies
    \begin{equation*}
        \delta(V_{-i/2}a)=\lim_{n\to\infty}\sqrt\frac n\pi\int_{\IR}e^{-n(s+i/2)^2}\cV_s\delta(a)\,ds,
    \end{equation*}
    which equals $\cV_{-i/2}\delta(a)$ by another application of \cite[Theorem 4.2]{Uit00}.
    
    Thus $\Re\mathcal E(a,V_{-i/2}a)=\Re\langle \delta(a),\cV_{-i/2}\delta(a)\rangle$, which is positive since $\cV_{-i/2}$ is accretive.
\end{proof}

\begin{remark}
    The preceding theorem can also be checked directly without using \Cref{thm:char_bdd}. By the approximation argument given in the proof above, it suffices to consider bounded quantum Dirichlet forms. If $\mathcal E$ is bounded, then $\norm\cdot_{\mathcal E}$ and $\norm\cdot_2$ are equivalent, hence $V_{-i/2}=\Delta_\phi^{1/2}$ and $\Delta_\phi^{1/4}\dom(\sigma^\phi_{-i/2})\Omega_\phi$ is a core for $V_{-i/2}$. Let $(\Phi_t)_{t\geq 0}$ be the quantum Markov semigroup associated with $\mathcal E$ and let $\cL$ denote its generator. If $x\in \dom(\sigma^\phi_{-i/2})$, then
    \begin{align*}
        \Re\mathcal E(\Delta_\phi^{1/4}x\Omega_\phi,\Delta_\phi^{3/4}x\Omega_\phi)
        &=\Re\langle \Delta_\phi^{1/4}\cL(x)\Omega_\phi,\Delta_\phi^{3/4}x\Omega_\phi\rangle\\
        &=\Re\langle \Delta_\phi^{1/2}\cL(x)\Omega_\phi,\Delta_\phi^{1/2}x\Omega_\phi\rangle\\
        &=\Re\langle x^\ast\Omega_\phi,\cL(x)^\ast\Omega_\phi\rangle\\
        &=\frac 1 2\phi(x\cL(x)^\ast+\cL(x)x^\ast)\\
        &=\frac1  2\lim_{t\to 0}\frac 1 t\phi(x x^\ast-\Phi_t(x)\Phi_t(x)^\ast)
    \end{align*}
    The last expression is positive by the Kadison--Schwarz inequality.
\end{remark}

\begin{lemma}
    If $\mathcal E$ is a quantum Dirichlet form and $a\in \dom(\mathcal E)\cap M\Omega_\phi$, $b\in \dom(\mathcal E)\cap M^\prime \Omega_\phi$, then $ab\in \dom(\mathcal E)$ and
    \begin{equation*}
        \mathcal E(ab)^{1/2}\leq \norm{\pi_l(a)}\mathcal E(b)^{1/2}+\mathcal E(a)^{1/2}\norm{\pi_r(b)}.
    \end{equation*}
\end{lemma}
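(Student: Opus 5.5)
Following the pattern of \Cref{lem:bound_mod_grp_form}, we first treat bounded forms and then pass to the general case through the Moreau regularization. Suppose first that $\mathcal E$ is bounded, and let $(\cH,\cJ,(\cV_s)_{s\in\IR},\delta)$ be the quadruple from \Cref{thm:char_bdd}, so that $\mathcal E(c)=\norm{\delta(c)}_\cH^2$ for all $c\in L^2(M)$. Write $a=x\Omega_\phi$ with $x=\pi_l(a)\in M$ and $b=y^\prime\Omega_\phi$ with $y^\prime=\pi_r(b)\in M^\prime$, and put $y=Jy^{\prime\ast}J\in M$. Then $ab=xy^\prime\Omega_\phi=x\Omega_\phi y$ in the bimodule notation, and $b=\Omega_\phi y$. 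The Leibniz rule (v) of \Cref{thm:char_bdd} therefore gives
\begin{equation*}
    \delta(ab)=\delta(x\Omega_\phi y)=x\,\delta(\Omega_\phi y)+\delta(x\Omega_\phi)\,y=x\,\delta(b)+\delta(a)\,y .
\end{equation*}
Left and right actions on a correspondence are contractive, and $\norm{y}=\norm{y^\prime}=\norm{\pi_r(b)}$. The triangle inequality then yields
\begin{equation*}
    \mathcal E(ab)^{1/2}=\norm{\delta(ab)}\leq\norm{\pi_l(a)}\,\mathcal E(b)^{1/2}+\mathcal E(a)^{1/2}\norm{\pi_r(b)},
\end{equation*}
which is the claim in the bounded case. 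In this case the domain statement is automatic.

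For a general quantum Dirichlet form, \Cref{lem:Moreau_reg_Dir} shows that each $\mathcal E_\epsilon$ is a bounded quantum Dirichlet form. Moreover, $\mathcal E_\epsilon\leq\mathcal E$ and $\mathcal E_\epsilon\nearrow\mathcal E$ pointwise as $\epsilon\searrow 0$. Applying the bounded case to $\mathcal E_\epsilon$ gives
\begin{equation*}
    \mathcal E_\epsilon(ab)^{1/2}\leq \norm{\pi_l(a)}\,\mathcal E_\epsilon(b)^{1/2}+\mathcal E_\epsilon(a)^{1/2}\norm{\pi_r(b)}\leq \norm{\pi_l(a)}\,\mathcal E(b)^{1/2}+\mathcal E(a)^{1/2}\norm{\pi_r(b)}.
\end{equation*}
Taking the supremum over $\epsilon>0$ yields the inequality for $\mathcal E$ itself. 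Since the right-hand side is finite, this also gives $ab\in\dom(\mathcal E)$.

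The only delicate point is identifying $ab$ with $x\Omega_\phi y$ and $b$ with $\Omega_\phi y$ under the paper's conventions, which comes down to checking that $JyJ$ acts correctly. With the right action $\xi y=Jy^\ast J\xi$ and the choice $y=Jy^{\prime\ast}J$, we get $Jy^\ast J=y^\prime$, so $\Omega_\phi y=y^\prime\Omega_\phi=b$ and $x\Omega_\phi y=xy^\prime\Omega_\phi=ab$. This is a routine check. In particular, no analyticity of $x$ or $y$ is needed, because the Leibniz rule (v) holds for all $x,y\in M$ and $\delta$ is bounded.
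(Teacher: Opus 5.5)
Your proposal is correct and follows the paper's proof essentially step for step. For bounded $\mathcal E$ you use the bounded derivation $\delta$ from \Cref{thm:char_bdd}, its Leibniz rule (v) and the triangle inequality, and you then pass to general forms through the Moreau regularizations $\mathcal E_\epsilon\nearrow\mathcal E$; your explicit check that $ab=x\Omega_\phi y$ with $y=J\pi_r(b)^\ast J$, and that a finite right-hand side forces $ab\in\dom(\mathcal E)$, fill in details the paper leaves implicit.
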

\begin{proof}
    First assume that $\mathcal E$ is bounded. By \Cref{thm:char_bdd} there exists an $M$-$M$ correspondence $\cH$ and a bounded linear map $\delta\colon L^2(M)\to\cH$ such that $\delta(ab)=\pi_l(a)\cdot\delta(b)+\delta(a)\cdot J \pi_r(b)^\ast J$ and $\mathcal E(c)=\norm{\delta(c)}^2$ for all $c\in L^2(M)$. Thus
    \begin{align*}
        \mathcal E(ab)^{1/2}
        &=\norm{\delta(ab)}\\
        &=\norm{\pi_l(a)\delta(b)+\delta(a)\cdot J \pi_r(b)^\ast J}\\
        &\leq \norm{\pi_l(a)}\mathcal E(b)^{1/2}+\mathcal E(a)^{1/2}\norm{\pi_r(b)}.
    \end{align*}
    For not necessarily bounded forms, the result follows now easily using the approximation by the Moreau regularization as before.
\end{proof}

\begin{corollary}\label{cor:Tomita_subalgebra}
    If $\mathcal E$ is a conservative completely Dirichlet form, then $\mathfrak A_{\mathcal E}$ is a Tomita subalgebra of the maximal Tomita algebra associated with $\phi$.
\end{corollary}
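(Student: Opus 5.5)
We have to verify that $\mathfrak A_{\mathcal E}$ is closed under the product, under $J$, and under $\Delta_\phi^{z}$ for every $z\in\IC$. Throughout, write $a=x\Omega_\phi$ with $x$ entire analytic for $\sigma^\phi$, and let $V_z$ denote the complex extension of the restricted group from \Cref{lem:res_mod_group_form}. By the definition of $\mathfrak A_{\mathcal E}$, $a\in\bigcap_z\dom(V_z)$ and $V_z a=\Delta_\phi^{iz}a=\sigma^\phi_z(x)\Omega_\phi$.

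\emph{Invariance under $\Delta_\phi^{w}$.} We have $\Delta_\phi^{w}x\Omega_\phi=\sigma^\phi_{-iw}(x)\Omega_\phi$, and $z\mapsto\sigma^\phi_z(\sigma^\phi_{-iw}(x))\Omega_\phi=\sigma^\phi_{z-iw}(x)\Omega_\phi$ is a translate of an $\norm\cdot_{\mathcal E}$-entire map. It is therefore again entire, and $\Delta_\phi^w a\in\mathfrak A_{\mathcal E}$.

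\emph{Invariance under $J$.} We use $J x\Omega_\phi=\Delta_\phi^{1/2}x^\ast\Omega_\phi=\sigma^\phi_{-i/2}(x^\ast)\Omega_\phi$. Since $x^\ast$ is entire analytic with $\sigma^\phi_z(x^\ast)=\sigma^\phi_{\bar z}(x)^\ast$, we get
\begin{equation*}
\sigma^\phi_z(\sigma^\phi_{-i/2}(x^\ast))\Omega_\phi=\sigma^\phi_{\bar z+i/2}(x)^\ast\Omega_\phi=J\Delta_\phi^{1/2}\sigma^\phi_{\bar z+i/2}(x)\Omega_\phi=J\sigma^\phi_{\bar z}(x)\Omega_\phi .
\end{equation*}
Since $\mathcal E(J\cdot)=\mathcal E(\cdot)$, the map $J$ is an anti-linear isometry of $(\dom(\mathcal E),\norm\cdot_{\mathcal E})$. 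Composing the antiholomorphic map $z\mapsto \bar z$, the entire map $w\mapsto\sigma^\phi_w(x)\Omega_\phi$, and the anti-linear isometry $J$ yields an entire map. So $Ja\in\mathfrak A_{\mathcal E}$.

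\emph{Closure under products.} Take $a=x\Omega_\phi$ and $b=y\Omega_\phi$ in $\mathfrak A_{\mathcal E}$; their product is $ab=\pi_l(a)b=xy\Omega_\phi$. To apply the preceding lemma we realize $b$ in $M'\Omega_\phi$: $b=y\Omega_\phi=Jy'J\Omega_\phi$ with $y'=\sigma^\phi_{-i/2}(y)^\ast$ (up to the standard identity $y\Omega_\phi=J\sigma^\phi_{-i/2}(y)^\ast J\Omega_\phi$), so $\pi_r(b)$ is bounded. The lemma then gives $xy\Omega_\phi\in\dom(\mathcal E)$ with
\begin{equation*}
\mathcal E(xy\Omega_\phi)^{1/2}\leq\norm{x}\,\mathcal E(y\Omega_\phi)^{1/2}+\mathcal E(x\Omega_\phi)^{1/2}\norm{\sigma^\phi_{-i/2}(y)}.
\end{equation*}
Applying the same estimate with $\sigma^\phi_z(x)$ and $\sigma^\phi_z(y)$ in place of $x,y$ shows that $\sigma^\phi_z(xy)\Omega_\phi=\sigma^\phi_z(x)\sigma^\phi_z(y)\Omega_\phi$ lies in $\dom(\mathcal E)$ for all $z$.

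The main obstacle is analyticity of $z\mapsto\sigma^\phi_z(x)\sigma^\phi_z(y)\Omega_\phi$ in form norm. The plan is to treat the bilinear map $(u,v)\mapsto uv$, for $u\in M$ and $v\in\dom(\mathcal E)\cap M\Omega_\phi$ with $\sigma_{-i/2}^\phi(\pi_l(v))$ bounded, as jointly continuous by the bound above. Then it suffices that $z\mapsto\sigma^\phi_z(x)$ is norm-entire in $M$, that $z\mapsto\sigma^\phi_z(y)\Omega_\phi$ is entire in $\norm\cdot_{\mathcal E}$, and the corresponding statements for the mixed terms. Difference quotients can then be handled by the product rule: write
\begin{equation*}
\sigma_{z+h}(x)\sigma_{z+h}(y)-\sigma_z(x)\sigma_z(y)=(\sigma_{z+h}(x)-\sigma_z(x))\sigma_{z+h}(y)+\sigma_z(x)(\sigma_{z+h}(y)-\sigma_z(y))
\end{equation*}
and estimate each term with the inequality. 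This requires norm-analyticity of $z\mapsto\sigma^\phi_z(x)$ in $M$ together with control of $\mathcal E$-norms of $\sigma^\phi_z(x)\Omega_\phi$; the former holds for entire analytic elements, since weak$^\ast$ analyticity implies norm analyticity for vector-valued functions. Alternatively, we first check weak analyticity, pairing with $\mathcal E(c,\cdot)$ for $c$ in a core, and upgrade it via local boundedness of the $\mathcal E$-norm, which the inequality provides. Either way, $ab\in\mathfrak A_{\mathcal E}$, and hence $\mathfrak A_{\mathcal E}$ is a Tomita subalgebra.
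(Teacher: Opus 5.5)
Your proposal is correct and follows the route the paper intends. The paper states the corollary without proof as a consequence of the Leibniz-type bound $\mathcal E(ab)^{1/2}\leq \norm{\pi_l(a)}\mathcal E(b)^{1/2}+\mathcal E(a)^{1/2}\norm{\pi_r(b)}$, together with $\mathcal E\circ J=\mathcal E$ and the definition of $\mathfrak A_{\mathcal E}$ for invariance under $\Delta_\phi^z$. Your product-rule (or weak-analyticity plus local boundedness) argument supplies the form-norm analyticity that the paper leaves implicit.

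There is one harmless sign slip. The correct identity is $y\Omega_\phi=J\sigma^\phi_{i/2}(y)^\ast J\Omega_\phi$; your version yields $\Delta_\phi y\Omega_\phi$ instead. So your bounds should involve $\norm{\sigma^\phi_{i/2}(y)}$ rather than $\norm{\sigma^\phi_{-i/2}(y)}$. This changes nothing, since $y$ is entire analytic.
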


\begin{lemma}
    If $\mathcal E$ is a quantum Dirichlet form, then $A_{\mathcal E}$ is a strong$^\ast$ dense unital $C^\ast$-subalgebra of $M$.
\end{lemma}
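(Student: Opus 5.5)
We need to show that $A_{\mathcal E}$, the norm closure of $A_0=\{x\in M\mid x\Omega_\phi\in\mathfrak A_{\mathcal E}\}$, is a unital $C^\ast$-subalgebra of $M$ that is strong$^\ast$ dense.

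\emph{Algebraic structure.} The plan is to first check that $A_0$ is a unital $\ast$-subalgebra of $M$; its norm closure is then automatically a unital $C^\ast$-subalgebra.
\begin{itemize}
\item Unit: $\Omega_\phi\in\mathfrak A_{\mathcal E}$, since $\sigma^\phi_z(1)\Omega_\phi=\Omega_\phi$ is constant and $\mathcal E(\Omega_\phi)=0$.
\item Adjoint: for $x\in A_0$ we have $x^\ast\Omega_\phi=J\Delta_\phi^{1/2}x\Omega_\phi=J\sigma^\phi_{-i/2}(x)\Omega_\phi$. More generally $\sigma^\phi_z(x^\ast)\Omega_\phi=J\sigma^\phi_{\bar z-i/2}(x)\Omega_\phi$. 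Since $J$ is an anti-linear isometry of $(\dom(\mathcal E),\norm\cdot_{\mathcal E})$, the map $z\mapsto J F(\bar z-i/2)$ is again entire analytic in form norm whenever $F$ is.
\item Products: for $x,y\in A_0$, \Cref{cor:Tomita_subalgebra} gives that $\mathfrak A_{\mathcal E}$ is closed under the product, and $(x\Omega_\phi)(y\Omega_\phi)=xy\Omega_\phi$, so $xy\in A_0$.
\end{itemize}

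\emph{Density.} By \Cref{prop:Tomita_form_dense}, $\mathfrak A_{\mathcal E}$ is dense in $\dom(\mathcal E)$ for the form norm, hence dense in $L^2(M)$. Density of $A_0\Omega_\phi$ alone does not give strong$^\ast$ density of $A_0$, so I would instead use the bicommutant theorem. Let $N=A_0''$, a von Neumann subalgebra of $M$. Since $\mathfrak A_{\mathcal E}$ is stable under $\Delta_\phi^{z}$, and in particular under $\Delta_\phi^{it}$, $A_0$ is $\sigma^\phi$-invariant, and therefore so is $N$. By Takesaki's theorem there is a $\phi$-preserving conditional expectation $E\colon M\to N$, and the corresponding projection $P$ onto $\overline{N\Omega_\phi}$ satisfies $P(x\Omega_\phi)=E(x)\Omega_\phi$. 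Since $\overline{N\Omega_\phi}\supset\overline{A_0\Omega_\phi}=L^2(M)$, we get $P=1$, so $E(x)\Omega_\phi=x\Omega_\phi$ and hence $E=\id$ because $\Omega_\phi$ is separating. Thus $N=M$. Kaplansky's density theorem then makes the unit ball of $A_0$ strong$^\ast$ dense in that of $M$, so $A_{\mathcal E}$ is strong$^\ast$ dense.

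\emph{Main obstacle.} The delicate step is making sure $A_0$ is genuinely closed under adjoints and products, i.e.\ that the analytic continuation in form norm behaves well under these operations. I would rely on \Cref{cor:Tomita_subalgebra}, which provides closure under $J$, $\Delta_\phi^z$ and the product, and transfer it to $A_0$ via $\pi_l$. Everything else is standard.
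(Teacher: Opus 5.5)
Your proposal is correct and follows the paper's proof essentially step for step. Both arguments get the unital $\ast$-algebra structure of $\pi_l(\mathfrak A_{\mathcal E})$ from the Tomita-subalgebra corollary, then use $\sigma^\phi$-invariance of the bicommutant, Takesaki's $\phi$-preserving conditional expectation, and the $L^2$-density of $\mathfrak A_{\mathcal E}$ to force $E=\id$, and conclude with Kaplansky's density theorem. Your direct check of the unit and of adjoints via $\sigma^\phi_z(x^\ast)\Omega_\phi=J\sigma^\phi_{\bar z-i/2}(x)\Omega_\phi$ is a correct supplement to what the paper reads off from the corollary.
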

\begin{proof}
    It follows from Corollary \ref{cor:Tomita_subalgebra} that $\pi_l(\mathfrak A_{\mathcal E})$ is a unital $\ast$-subalgebra of $M$. Let $N=\pi_l(\mathfrak A_{\mathcal E})^{\prime\prime}$. As a consequence of Lemma \ref{lem:res_mod_group_form}, the set $\pi_l(\mathfrak A_{\mathcal E})$ and thus also $N$ is invariant under $\sigma^\phi$. By Takesaki's theorem, there exists a unique $\phi$-preserving conditional expectation $E\colon M\to N$, which has the property that $E(x)\Omega_\phi$ is the orthogonal projection of $x\Omega_\phi$ onto $\overline{N\Omega_\phi}$ for $x\in M$. However, since $\mathfrak A_{\mathcal E}$ is a form core for $\mathcal E$, it is in particular dense in $L^2(M)$. Therefore $E=\mathrm{id}$ and $N=M$. The strong$^\ast$ density of $A_{\mathcal E}=\overline{\pi_l(\mathfrak A_{\mathcal E})}^{\norm\cdot}$ in $M$ is now a consequence of the Kaplansky density theorem.
\end{proof}

\section{Unbounded quantum Dirichlet forms}\label{sec:unbdd_Dir_forms}

In this section we prove a representation theorem for general, not necessarily bounded quantum Dirichlet forms. This result takes on a similar form to \Cref{thm:char_bdd} with the exception that the derivation $\delta$ of course need not be bounded and the bimodule actions are only defined on the $C^\ast$-algebra $A_{\mathcal E}$ and are not necessarily bounded. The proof proceeds through an ultraproduct construction, the hard part being the accretivity of the square root of the analytic generator of the ultraproduct of the strongly continuous groups.

As before, throughout this section $M$ is a von Neumann algebra and $\phi$ a faithful normal state on $M$.

Let $\mathcal E$ be a quantum Dirichlet form on $L^2(M)$ with respect to $\Omega_\phi$. By \Cref{lem:Moreau_reg_Dir}, for every $n\in\IN$ the Moreau regularization $\mathcal E_{1/n}$ is a bounded quantum Dirichlet form. By \Cref{thm:char_bdd} we obtain for every $n\in\IN$ a quadruple $(\cH^{(n)},\cJ^ {(n)},(\cV^{(n)}_s)_{s\in \IR},\delta^{(n)})$ consisting of a self-dual $M$-$M$ correspondence $(\cH^{(n)},\cJ^{(n)})$, a strongly continuous group $(\cV_s^{(n)})_{s\in \IR}$ on $\cH^{(n)}$ and a bounded linear operator $\delta^{(n)}\colon L^2(M)\to \cH^{(n)}$ satisfying the properties (i)--(v) stated there.

If $\omega\in\beta\mathbb N\setminus\mathbb N$, consider the ultraproduct $\prod_{n\to\omega}\cH^{(n)}$, which is an $A_{\mathcal E}$-$A_{\mathcal E}$ correspondence with the bimodule actions defined by $x(\eta_n)_n^\omega y=(x\eta_n y)_n^\omega$ for $x,y\in A_{\mathcal E}$ and $(\eta_n)_n^\omega\in \prod_{n\to\omega}\cH_n$. Note that while the bimodule actions are well-defined for all $x,y\in M$, they may fail to be normal so that $\prod_{n\to\omega}\cH^{(n)}$ is not necessarily a (von Neumann algebraic) $M$-$M$ correspondence.

We define $\delta\colon \mathfrak A_{\mathcal E}\to \prod_{n\to\omega}\cH^{(n)}$, $\delta(a)=(\delta^{(n)}(a))_n^\omega$ and $\cH=\overline{\operatorname{span}}\{x\delta(a)y\mid a\in \mathfrak A_{\mathcal E},x,y\in A_{\mathcal E}\}$. Note that $\mathcal H$ is still an $A_{\mathcal E}$-$A_{\mathcal E}$ correspondence.

\begin{lemma}\label{lem:deriv_unbdd}
    The operator $\delta\colon \mathfrak A_{\mathcal E}\to \cH$ is closable, $\dom(\bar\delta)=\dom(\mathcal E)$ and $\norm{\bar\delta(a)}^2=\mathcal E(a)$ for all $a\in \dom(\mathcal E)$.
\end{lemma}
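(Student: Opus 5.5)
The proof rests on two facts: the identity $\norm{\delta(a)}^2=\mathcal E(a)$ on $\mathfrak A_{\mathcal E}$, and the density of $\mathfrak A_{\mathcal E}$ in $\dom(\mathcal E)$ for the form norm.

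\emph{The identity.} For $a\in \mathfrak A_{\mathcal E}\subset\dom(\mathcal E)$, the properties of the ultraproduct norm give
\begin{equation*}
    \norm{\delta(a)}^2=\lim_{n\to\omega}\norm{\delta^{(n)}(a)}^2=\lim_{n\to\omega}\mathcal E_{1/n}(a).
\end{equation*}
Here we used $L^{(n)}=(\delta^{(n)})^\ast\delta^{(n)}$ from \Cref{thm:char_bdd}, applied to the generator of the bounded form $\mathcal E_{1/n}$ (which is a quantum Dirichlet form by \Cref{lem:Moreau_reg_Dir}). Since $\mathcal E_{1/n}(a)\nearrow\mathcal E(a)<\infty$ monotonically, the ordinary limit exists, and so the ultralimit equals $\mathcal E(a)$. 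This gives $\norm{\delta(a)}^2=\mathcal E(a)$ for all $a\in\mathfrak A_{\mathcal E}$. By polarization, $\delta$ is an isometry from $(\mathfrak A_{\mathcal E},\mathcal E^{1/2})$ into $\cH$. Note also that $\delta(a)\in\cH$ by the definition of $\cH$: the unit of $A_{\mathcal E}$ acts trivially, because $x=1$ lies in $A_{\mathcal E}$.

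\emph{Closability.} Let $a_k\in\mathfrak A_{\mathcal E}$ with $a_k\to0$ in $L^2(M)$ and $\delta(a_k)\to\eta$ in $\cH$. Then $(a_k)$ is Cauchy in the form norm $\norm\cdot_{\mathcal E}$. Because $\mathcal E$ is closed, $a_k\to b$ in $(\dom(\mathcal E),\norm\cdot_{\mathcal E})$ for some $b$, and $b=0$ since the form norm dominates the $L^2$ norm. Hence $\norm{\delta(a_k)}^2=\mathcal E(a_k)\to0$, so $\eta=0$, and $\delta$ is closable.

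\emph{Domain and the identity for $\bar\delta$.} The same argument shows that $a\in\dom(\bar\delta)$ if and only if there is a sequence $(a_k)$ in $\mathfrak A_{\mathcal E}$ that converges to $a$ in $L^2$ and is $\norm\cdot_{\mathcal E}$-Cauchy; such an $a$ is then the form-norm limit of $(a_k)$ and lies in $\dom(\mathcal E)$. Conversely, by \Cref{prop:Tomita_form_dense} the set $\mathfrak A_{\mathcal E}$ is a form core, so every $a\in\dom(\mathcal E)$ is such a limit. Hence $\dom(\bar\delta)=\dom(\mathcal E)$, and passing to the limit gives $\norm{\bar\delta(a)}^2=\lim_k\mathcal E(a_k)=\mathcal E(a)$.

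Once \Cref{prop:Tomita_form_dense} is available there is no genuine obstacle. The only delicate point is to apply the identity $\mathcal E_{1/n}(a)=\norm{\delta^{(n)}(a)}^2$ on the whole space $L^2(M)$, which \Cref{thm:char_bdd} provides because $\delta^{(n)}$ is bounded, and to use the monotone convergence of the Moreau regularizations to identify the ultralimit.
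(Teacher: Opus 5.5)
Your proof is correct and follows the paper's approach. The paper likewise derives $\norm{\delta(a)}^2=\lim_{n\to\omega}\mathcal E_{1/n}(a)=\mathcal E(a)$ on $\mathfrak A_{\mathcal E}$ and then cites \Cref{prop:Tomita_form_dense} together with ``standard operator-theoretic arguments''; you have written those arguments out, namely closability via completeness of $(\dom(\mathcal E),\norm\cdot_{\mathcal E})$ and the identification of $\dom(\bar\delta)$ via the form core property.
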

\begin{proof}
If $a\in\mathfrak A_{\mathcal E}$, then
\begin{equation*}
    \norm{\delta(a)}^2=\lim_{n\to\omega}\norm{\delta^{(n)}(a)}^2=\lim_{n\to\omega}\mathcal E_{1/n}(a)=\mathcal E(a).
\end{equation*}
As $\mathfrak A_{\mathcal E}$ is a form core for $\mathcal E$ by \Cref{prop:Tomita_form_dense}, the remaining claims follow from standard operator-theoretic arguments.
\end{proof}

Define bounded linear maps $\cJ,\cV_s\colon \cH\to \prod_{n\to\omega}\cH^{(n)}$ by $\cJ((\eta_n)_n^\omega)=(\cJ^{(n)} \eta_n)_n^\omega$ and $\cV_s((\eta_n)_n^\omega)=(\cV_s^{(n)} \eta_n)_n^\omega$. Note that $\cV_s$ is well-defined and $\norm{\cV_s}\leq e^{\pi\abs{s}}$ since $\norm{\cV_s^{(n)}}\leq e^{\pi\abs{s}}$ for all $n\in\IN$ by \Cref{rem:norm_V_s}.

The maps $\cJ$ and $\cV_s$, $s\in\IR$, take values in $\cH$. For $\cJ$, this follows from
\begin{align*}
    \cJ(x\delta(a)y)=(\cJ^{(n)}(x\delta^{(n)}(a)y))_n^\omega=(y^\ast \delta^{(n)}(J a)x^\ast)_n^\omega=y^\ast \delta(J a)x^\ast
\end{align*}
for $x,y\in A_{\mathcal E}$ and $a\in \mathfrak A_{\mathcal E}$. For $\cV_s$, the proof is similar.

\begin{lemma}
    The one-parameter group $(\cV_s)_{s\in\IR}$ on $\cH$ is strongly continuous.
\end{lemma}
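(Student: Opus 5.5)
Since $\norm{\cV_s}\leq e^{\pi\abs s}$ for all $s\in\IR$, the family $(\cV_s)_{s\in\IR}$ is locally uniformly bounded on $\cH$. The group law $\cV_{s+t}=\cV_s\cV_t$ and $\cV_0=\id$ hold componentwise in the ultraproduct. By a standard $\epsilon/3$-argument it therefore suffices to prove $\norm{\cV_s\eta-\eta}\to 0$ as $s\to 0$ for $\eta$ in the total subset $\{x\delta(a)y\mid a\in\mathfrak A_{\mathcal E},\,x,y\in A_{\mathcal E}\}$ of $\cH$. In fact it is enough to take $x,y$ in the dense $\ast$-subalgebra $\pi_l(\mathfrak A_{\mathcal E})$: the bimodule actions of $A_{\mathcal E}$ on $\cH$ are contractive, since they are ultraproducts of contractive actions, so norm approximation of $x,y$ by such elements costs only $\norm{x-x'}\norm{\delta(a)}\norm{y}$ and similar terms, uniformly in $s$ near $0$.

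For such $\eta=x\delta(a)y$ we have, componentwise,
\begin{equation*}
\cV_s\eta-\eta=\bigl((\sigma^\phi_s(x)-x)\delta^{(n)}(\Delta_\phi^{is}a)\sigma^\phi_s(y)+x\,\delta^{(n)}(\Delta_\phi^{is}a-a)\,\sigma^\phi_s(y)+x\,\delta^{(n)}(a)(\sigma^\phi_s(y)-y)\bigr)_n^\omega.
\end{equation*}
We estimate the three terms uniformly in $n$. The middle term has norm at most $\norm x\norm y\,\mathcal E_{1/n}(\Delta_\phi^{is}a-a)^{1/2}\leq\norm x\norm y\,\mathcal E(\Delta_\phi^{is}a-a)^{1/2}$, because $\mathcal E_{1/n}\leq\mathcal E$. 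This tends to $0$ by \Cref{lem:res_mod_group_form}. The first term is bounded by $\norm{\sigma^\phi_s(x)-x}\,e^{\pi\abs s}\norm{a}_{\mathcal E}\norm y$, and the third by $\norm x\norm{a}_{\mathcal E}\norm{\sigma^\phi_s(y)-y}$.

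\textbf{Main obstacle.} The first and third bounds need \emph{norm} continuity of $s\mapsto\sigma^\phi_s(x)$, which fails for general $x\in M$. This is the reason for restricting to $x\in\pi_l(\mathfrak A_{\mathcal E})$.

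For $x$ entire analytic, $s\mapsto\sigma^\phi_s(x)$ is analytic, hence norm continuous, provided the extension $z\mapsto\sigma^\phi_z(x)$ is locally bounded in operator norm. This local boundedness follows from the construction via the Gaussian smoothing of \Cref{lem:Haagerup_approx_groups}. Alternatively, one uses the standard fact that an entire function into $M$ that is weak$^\ast$ analytic and locally bounded is norm analytic, where local boundedness comes from the uniform boundedness principle.

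Having handled the first and third terms this way, all three terms tend to $0$ as $s\to0$ uniformly in $n$. Hence the ultraproduct norm $\norm{\cV_s\eta-\eta}$ tends to $0$, which proves strong continuity.
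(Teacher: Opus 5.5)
Your proof is correct, but it takes a different route from the paper's. The paper uses $\norm{\cV_s}\leq e^{\pi\abs s}$ and \cite[Theorem I.5.8]{EN00} to reduce to \emph{weak} continuity on the total set $\{x\delta(a)y\}$. It then proves $\delta(\Delta_\phi^{is}a)\to\delta(a)$ only weakly: it pairs with $\overline\delta(b)$ for $b\in\dom(L)$, uses $\langle\delta(\Delta_\phi^{is}a),\overline\delta(b)\rangle=\langle\Delta_\phi^{is}a,Lb\rangle$ together with the bound from \Cref{lem:bound_mod_grp_form}, and concludes by density of $\overline\delta(\dom(L))$.

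You instead prove norm convergence directly, with estimates uniform in $n$. You control the middle term by $\mathcal E_{1/n}\leq\mathcal E$ and the form-norm continuity of \Cref{lem:res_mod_group_form}. Equivalently, $\norm{\delta(\Delta_\phi^{is}a)-\delta(a)}^2=\mathcal E(\Delta_\phi^{is}a-a)$ by \Cref{lem:deriv_unbdd}, which makes the weak-convergence detour and the appeal to \cite[Theorem I.5.8]{EN00} unnecessary. You control the outer terms by norm continuity of $s\mapsto\sigma^\phi_s(x)$.

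That second point is a real gain in care. The actions of $A_{\mathcal E}$ on the ultraproduct need not be normal, so the strong-operator convergence $\sigma^\phi_s(x)\to x$ in $M$ that the paper invokes does not by itself transfer to $\cH$. What actually makes the outer terms (in either proof) behave is norm continuity on $A_{\mathcal E}$, and you supply it.

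Two small remarks. First, your first justification of local boundedness, via Gaussian smoothing, does not apply to arbitrary elements of $\pi_l(\mathfrak A_{\mathcal E})$: these are defined by a property, not produced by that construction. The uniform-boundedness argument is the one that works. Second, you need not restrict to $\pi_l(\mathfrak A_{\mathcal E})$ at all. Each $\sigma^\phi_s$ is isometric, so norm continuity of $s\mapsto\sigma^\phi_s(x)$ passes to the norm closure $A_{\mathcal E}$.
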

\begin{proof}
    Since $\norm{\cV_s}\leq e^{\pi\abs{s}}$, by \cite[Theorem I.5.8]{EN00} it suffices to show that $\cV_s$ is weakly continuous on the total subset $\{x\delta(a)y\mid a\in \mathfrak A_{\mathcal E},\,x,y\in A_{\mathcal E}\}$ of $\cH$. Since $\cV_s(x\delta(a)y)=\sigma^\phi_s(x)\delta(\Delta_\phi^{is}a)\sigma^\phi_s(y)$ and $\sigma^\phi_s(x)\to x$, $\sigma^\phi_s(y)\to y$ in strong operator topology as $s\to 0$, it is enough to show that $\delta(\Delta_\phi^{is}a)\to \delta(a)$ weakly as $s\to 0$.
    
    We have $\norm{\delta(\Delta_\phi^{is}a)}=\mathcal E(\Delta_\phi^{is}a)^{1/2}\leq e^{\pi\abs{s}}\norm{\delta(a)}$ by \Cref{lem:bound_mod_grp_form}. Let $L$ be the positive self-adjoint operator associated with $\mathcal E$. If $b\in \dom(L)$, then it follows from \Cref{lem:deriv_unbdd} that
\begin{equation*}
    \langle \delta(\Delta_\phi^{is}a),\overline\delta(b)\rangle=\langle \Delta_\phi^{is}a,L(b)\rangle\to \langle a,L(b)\rangle=\langle \delta(a),\overline\delta(b)\rangle\quad \text{as }s\to 0.
\end{equation*}
Since $\dom(L)$ is dense in $\dom(\mathcal E)$ with respect to $\norm{\cdot}_{\mathcal E}$ by the spectral theorem, $\overline\delta(\dom(L))$ is dense in $\overline{\delta(\mathfrak A_{\mathcal E})}$. We conclude that $\delta(\Delta_\phi^{is}a)\to \delta(a)$ weakly as $s\to 0$.
\end{proof}

The hard remaining part is to check that $\cV_{-i/2}$ is still accretive. To do so, we need two preliminary lemma which allow us to check accretivity on a convenient subset of $\dom(\cV_{-i/2})$.

\begin{lemma}\label{lem:anatytic_ext_V}
    If $a,b\in \mathfrak A_{\mathcal E}$, then $\pi_l(a)\cdot\delta(b)\in \bigcap_{z\in\mathbb C}\dom(\mathcal V_z)$ and $\mathcal V_z(\pi_l(a)\cdot\delta(b))=\pi_l(\Delta_\phi^{iz}a)\cdot\delta(\Delta_\phi^{iz}b)$ for all $z\in\IC$.
\end{lemma}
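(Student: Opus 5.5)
Fix $a,b\in\mathfrak A_{\mathcal E}$ and write $a=x\Omega_\phi$ with $x$ entire analytic for $\sigma^\phi$. The natural candidate for the analytic extension is
\begin{equation*}
F(z)=\sigma^\phi_z(x)\cdot\delta(\Delta_\phi^{iz}b),\quad z\in\IC,
\end{equation*}
where $\pi_l(\Delta_\phi^{iz}a)=\sigma^\phi_z(x)$ because $\Delta_\phi^{iz}x\Omega_\phi=\sigma^\phi_z(x)\Omega_\phi$. The plan is to check three things. First, $F$ takes values in $\cH$ and agrees with $s\mapsto\cV_s(\pi_l(a)\delta(b))$ on $\IR$. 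Second, $F$ is entire analytic in the norm of $\prod_{n\to\omega}\cH^{(n)}$. Third, these two facts together give $\pi_l(a)\delta(b)\in\dom(\cV_z)$ with $\cV_z(\pi_l(a)\delta(b))=F(z)$, by the definition of $\dom(\cV_z)$ through continuous extensions to closed strips that are analytic inside.

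For the first point, take $s\in\IR$. The formula $\cV_s(y\delta(c)w)=\sigma^\phi_s(y)\delta(\Delta_\phi^{is}c)\sigma^\phi_s(w)$, already used above, gives $\cV_s(\pi_l(a)\delta(b))=F(s)$. Since $\mathfrak A_{\mathcal E}$ is a Tomita algebra (\Cref{cor:Tomita_subalgebra}), $\Delta_\phi^{iz}b\in\mathfrak A_{\mathcal E}$ and $\sigma^\phi_z(x)\in\pi_l(\mathfrak A_{\mathcal E})\subset A_{\mathcal E}$. Hence $F(z)\in\cH$ for every $z\in\IC$.

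For the second point, split $F(z)=\sigma^\phi_z(x)\cdot\delta(\Delta_\phi^{iz}b)$ into its two factors.
\begin{enumerate}[(a)]
\item The map $z\mapsto\delta(\Delta_\phi^{iz}b)$ is analytic in $\cH$. By definition of $\mathfrak A_{\mathcal E}$, $z\mapsto\Delta_\phi^{iz}b$ is entire analytic for $\norm\cdot_{\mathcal E}$. By \Cref{lem:deriv_unbdd}, $\bar\delta$ is an isometry from $(\dom(\mathcal E),\mathcal E^{1/2})$ into $\cH$, so it is bounded for the form norm. Composing an analytic vector-valued function with a bounded linear map preserves analyticity.
\item The map $z\mapsto\sigma^\phi_z(x)$ is analytic in operator norm. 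Since $x$ is entire analytic, this holds, for instance, by the Cauchy integral formula in the weak$^\ast$ topology together with local boundedness.
\item The left action $A_{\mathcal E}\times\cH\to\cH$ is bilinear and jointly norm continuous, with $\norm{y\eta}\le\norm y\norm\eta$, because it is the ultraproduct of the contractive actions on $\cH^{(n)}$. The product of two analytic factors under a bounded bilinear map is analytic.
\end{enumerate}
Together these show that $F$ is entire and, in particular, continuous on closed strips.

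For the third point, identify $F|_{\overline S_{\Im z}}$ with the required extension $F_\xi$, where $\xi=\pi_l(a)\delta(b)$. Its uniqueness follows from the identity theorem, given that $F$ agrees with the orbit on the real line. Note that $\cH$ is a Hilbert space, hence reflexive, so weak$^\ast$ continuity coincides with weak continuity and is implied by norm continuity.

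I expect step (b) to be the main obstacle. The definition of entire analyticity is phrased weak$^\ast$ (or strongly), while we need norm analyticity in order to multiply with $\delta(\Delta_\phi^{iz}b)$. If the upgrade is awkward, I would avoid it. For fixed $\zeta\in\cH$ the function $z\mapsto\langle\zeta,\sigma^\phi_z(x)\eta\rangle$ is analytic, and norm local boundedness of $\sigma^\phi_z(x)$ follows from uniform boundedness on compacta. Weak analyticity plus local boundedness gives norm analyticity in a Banach space, and this argument can be applied to the product directly.
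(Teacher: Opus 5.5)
Your proposal is correct and follows the paper's proof essentially step for step: agreement with the orbit $s\mapsto\cV_s(\pi_l(a)\delta(b))$ on $\IR$, the product rule, and analyticity of $z\mapsto\delta(\Delta_\phi^{iz}b)$ obtained from form-norm analyticity together with $\norm{\bar\delta(c)}^2=\mathcal E(c)$. Step (b) is worth keeping: the paper's ``product rule'' step leaves implicit that $z\mapsto\sigma^\phi_z(x)$ must be norm analytic, which matters because the bimodule action on the ultraproduct need not be normal, and your weak$^\ast$ Cauchy formula plus uniform boundedness argument supplies this, so the fallback at the end is unnecessary.
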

\begin{proof}
    From the properties of $(\mathcal V_t)_{t\in\IR}$, it follows that $\mathcal V_t(\pi_l(a)\cdot\delta(b))=\pi_l(\Delta_\phi^{it}a)\cdot\delta(\Delta_\phi^{it}b)$ for $t\in\IR$. Hence it suffices to prove that $z\mapsto \pi_l(\Delta_\phi^{iz}a)\cdot\delta(\Delta_\phi^{iz}b)$ is entire analytic. By definition of $\mathfrak A_{\mathcal E}$, we have $\pi_l(a)\in \dom(\sigma^\phi_z)$ and $\sigma^\phi_z(\pi_l(a))=\pi_l(\Delta_\phi^{iz}a)$ for $z\in \IC$. Moreover, the map $z\mapsto \sigma^\phi_z(\pi_l(a))$ is entire analytic. By the product rule, it is therefore enough to show that $z\mapsto \delta(\Delta_\phi^{iz}b)$ is entire analytic.

    By definition of $\mathfrak A_{\mathcal E}$, the map $F\colon \IC\to\dom(\mathcal E),\,z\mapsto \Delta_\phi^{iz}b$ is analytic with respect to $\norm\cdot_{\mathcal E}$. Hence
    \begin{equation*}
        \left\lVert\frac 1 h\big(\delta(F(z+h)-\delta(F(z))\big)-\overline\delta(F^\prime(z))\right\rVert_{\mathcal H}^2=\mathcal E\left(\frac 1 h(F(z+h)-F(z))-F^\prime(z)\right)\to 0
    \end{equation*}
    as $h\to 0$. Therefore, $z\mapsto \delta(\Delta_\phi^{iz}b)$ is entire analytic.
\end{proof}

\begin{lemma}\label{lem:core_V_z}
    The set $\operatorname{span}\{\pi_l(a)\cdot\delta(b)\mid a,b\in\mathfrak A_{\mathcal E}\}$ is a core for $\mathcal V_z$ for all $z\in\IC$.
\end{lemma}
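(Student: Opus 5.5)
The natural tool is \Cref{lem:core_analytic_generator}. Its hypotheses are three: the subspace must be contained in $\dom(\cV_z)$, be dense in the Hilbert space (Hilbert spaces are reflexive), and be invariant under $(\cV_s)_{s\in\IR}$. So I will set $D=\operatorname{span}\{\pi_l(a)\cdot\delta(b)\mid a,b\in\mathfrak A_{\mathcal E}\}$ and verify these three properties for $D$ in $\cH$. The group $(\cV_s)_{s\in\IR}$ is strongly continuous on $\cH$ by the preceding lemma, so the conclusion then follows for every $z\in\IC$ at once.

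\emph{Containment in $\dom(\cV_z)$.} This is exactly \Cref{lem:anatytic_ext_V}, which gives $\pi_l(a)\cdot\delta(b)\in\bigcap_{z}\dom(\cV_z)$.

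\emph{Invariance.} For $s\in\IR$ the same lemma gives $\cV_s(\pi_l(a)\cdot\delta(b))=\pi_l(\Delta_\phi^{is}a)\cdot\delta(\Delta_\phi^{is}b)$. It then suffices to see that $\mathfrak A_{\mathcal E}$ is invariant under $\Delta_\phi^{is}$. If $a=x\Omega_\phi$, then $\Delta_\phi^{is}a=\sigma^\phi_s(x)\Omega_\phi$ and $\sigma^\phi_z(\sigma^\phi_s(x))=\sigma^\phi_{z+s}(x)$. So the required entire analyticity in form norm is inherited by translation, and invariance holds. (Alternatively one can quote \Cref{cor:Tomita_subalgebra}.)

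\emph{Density.} This is the main point, since $\cH$ is defined as the closed span of $x\delta(b)y$ with $x,y\in A_{\mathcal E}$ and $b\in\mathfrak A_{\mathcal E}$, whereas $D$ has no right multiplier and only left multipliers of the form $\pi_l(a)$. I would proceed in three steps.

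First, remove the right multiplier using the Leibniz rule. For $b,c\in\mathfrak A_{\mathcal E}$ (a Tomita algebra, so $bc\in\mathfrak A_{\mathcal E}$ with $bc=\pi_l(b)c$), property (v) passes to each $\delta^{(n)}$ and hence to the ultraproduct. It gives $\delta(b)\pi_l(c)=\delta(bc)-\pi_l(b)\delta(c)$ on $\mathfrak A_{\mathcal E}$, where the right action of $\pi_l(c)$ is the one occurring in (v): $\delta(x\Omega_\phi y)=x\delta(\Omega_\phi y)+\delta(x\Omega_\phi)y$. To see this, write $b=x\Omega_\phi$ and $c=y\Omega_\phi$, so that $bc=xy\Omega_\phi$. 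Applying (v) with the middle factor $\Omega_\phi$ requires care, because $\delta(\Omega_\phi)=0$ and the rule is really $\delta(x\Omega_\phi y)$. I would use the form $\delta(\pi_l(b)c)=\pi_l(b)\delta(c)+\delta(b)\cdot\sigma$-twisted factor, derived at level $n$ from the bounded characterization in \Cref{thm:char_bdd}, exactly as used in the proof of the lemma preceding \Cref{cor:Tomita_subalgebra}. The outcome is that $x\,\delta(b)\,y$ lies in $D$ whenever $x=\pi_l(a)$ and $y\in\pi_l(\mathfrak A_{\mathcal E})$, modulo the twist by $\sigma^\phi_{\pm i/2}$. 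That twist is harmless because $\mathfrak A_{\mathcal E}$ is invariant under $\Delta_\phi^{z}$.

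Second, pass from $\pi_l(\mathfrak A_{\mathcal E})$ to its norm closure $A_{\mathcal E}$. The bimodule actions on $\cH$ are by bounded operators of norm at most $\norm{x}$ and $\norm{y}$, and $\pi_l(\mathfrak A_{\mathcal E})$ is norm dense in $A_{\mathcal E}$. Hence every $x\delta(b)y$ with $x,y\in A_{\mathcal E}$ lies in $\overline D$.

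Third, the unit $1=\pi_l(\Omega_\phi)$ lies in $\pi_l(\mathfrak A_{\mathcal E})$, since $\Omega_\phi\in\mathfrak A_{\mathcal E}$ because $\mathcal E(\Omega_\phi)=0$ and $\Omega_\phi$ is fixed. So the terms $\delta(b)$ themselves belong to $D$.

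I expect the careful bookkeeping of the right Leibniz identity (which $\sigma^\phi_{\pm i/2}$-twist appears, and that it stays within $\mathfrak A_{\mathcal E}$) to be the only delicate step. Once density is established, \Cref{lem:core_analytic_generator} finishes the proof.
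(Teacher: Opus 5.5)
Your proposal is correct and follows the same route as the paper. It applies \Cref{lem:core_analytic_generator}, obtains containment in $\dom(\cV_z)$ and $(\cV_s)_{s\in\IR}$-invariance from \Cref{lem:anatytic_ext_V}, and gets density from the norm density of $\pi_l(\mathfrak A_{\mathcal E})$ in $A_{\mathcal E}$. Your additional Leibniz-rule step, $\delta(a)\,\sigma^\phi_{i/2}(\pi_l(b))=\delta(ab)-\pi_l(a)\delta(b)$ combined with the $\Delta_\phi^{z}$-invariance of $\mathfrak A_{\mathcal E}$, removes the right multipliers correctly and supplies a detail that the paper's one-line density claim leaves implicit.
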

\begin{proof}
    Let $D=\operatorname{span}\{\pi_l(a)\cdot\delta(b)\mid a,b\in\mathfrak A_{\mathcal E}\}$. Since $\pi_l(\mathfrak A_{\mathcal E})$ is norm dense in $A_{\mathcal E}$ by definition, the set $D$ is dense in $\mathcal H$. By \Cref{lem:anatytic_ext_V}, it is contained in $\dom(\mathcal V_z)$ and invariant under $(\mathcal V_s)_{s\in\IR}$. It follows from Lemma \ref{lem:core_analytic_generator} that $D$ is a core for $\mathcal V_z$.
\end{proof}

\begin{proposition}
    The operator $\mathcal V_{-i/2}$ is accretive.
\end{proposition}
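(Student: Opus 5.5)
By \Cref{lem:core_V_z}, the space $D=\operatorname{span}\{\pi_l(a)\cdot\delta(b)\mid a,b\in\mathfrak A_{\mathcal E}\}$ is a core for $\cV_{-i/2}$. The map $\eta\mapsto\Re\langle\eta,\cV_{-i/2}\eta\rangle$ is continuous with respect to the graph norm. It therefore suffices to prove $\Re\langle\eta,\cV_{-i/2}\eta\rangle\geq 0$ for $\eta=\sum_{k=1}^m\pi_l(a_k)\cdot\delta(b_k)$ with $a_k,b_k\in\mathfrak A_{\mathcal E}$.

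For such $\eta$, \Cref{lem:anatytic_ext_V} gives the explicit formula
\begin{equation*}
\cV_{-i/2}\eta=\sum_k\pi_l(\Delta_\phi^{1/2}a_k)\cdot\delta(\Delta_\phi^{1/2}b_k).
\end{equation*}
Since $\delta$ is defined componentwise as $\delta(c)=(\delta^{(n)}(c))_n^\omega$, and the bimodule action is also componentwise, this reads
\begin{equation*}
\langle\eta,\cV_{-i/2}\eta\rangle=\lim_{n\to\omega}\Big\langle\eta_n,\sum_k\pi_l(\Delta_\phi^{1/2}a_k)\cdot\delta^{(n)}(\Delta_\phi^{1/2}b_k)\Big\rangle,
\end{equation*}
where $\eta_n=\sum_k\pi_l(a_k)\cdot\delta^{(n)}(b_k)\in\cH^{(n)}$.

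The key step is to identify the $n$-th term with $\langle\eta_n,\cV^{(n)}_{-i/2}\eta_n\rangle$. For each fixed $n$, the form $\mathcal E_{1/n}$ is bounded, so $\delta^{(n)}$ is bounded on $L^2(M)$ and intertwines $\Delta_\phi^{is}$ with $\cV^{(n)}_s$. The argument in the proof of \Cref{thm:char_bdd} (for real $s$, then the identity theorem) shows that $x\delta^{(n)}(c)\in\dom(\cV^{(n)}_z)$ with
\begin{equation*}
\cV^{(n)}_z(x\delta^{(n)}(c))=\sigma^\phi_z(x)\,\delta^{(n)}(\Delta_\phi^{iz}c)
\end{equation*}
whenever $x\in\dom(\sigma^\phi_z)$ and $c\in\dom(\Delta_\phi^{iz})$. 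Taking $z=-i/2$, $x=\pi_l(a_k)$ and $c=b_k$, and using $\sigma^\phi_{-i/2}(\pi_l(a_k))=\pi_l(\Delta_\phi^{1/2}a_k)$, I obtain
\begin{equation*}
\cV^{(n)}_{-i/2}\eta_n=\sum_k\pi_l(\Delta_\phi^{1/2}a_k)\cdot\delta^{(n)}(\Delta_\phi^{1/2}b_k).
\end{equation*}
Hence the $n$-th term equals $\langle\eta_n,\cV^{(n)}_{-i/2}\eta_n\rangle$, whose real part is nonnegative by accretivity of $\cV^{(n)}_{-i/2}$ from \Cref{thm:char_bdd}. Nonnegativity of the real part survives the ultralimit, which gives the claim.

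The main obstacle is justifying that the ultralimit of the inner products really equals the inner product in the ultraproduct. This requires the sequences $(\delta^{(n)}(\Delta_\phi^{1/2}b_k))_n$ and $(\eta_n)_n$ to be bounded, so that they define elements of $\prod_{n\to\omega}\cH^{(n)}$ representing the vectors above. Boundedness follows from
\begin{equation*}
\norm{\delta^{(n)}(c)}^2=\mathcal E_{1/n}(c)\leq\mathcal E(c)
\end{equation*}
for $c\in\mathfrak A_{\mathcal E}$, together with $\Delta_\phi^{1/2}b_k\in\mathfrak A_{\mathcal E}$, which holds because $\mathfrak A_{\mathcal E}$ is a Tomita algebra by \Cref{cor:Tomita_subalgebra}. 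The operators $\pi_l(a_k)$ and $\pi_l(\Delta_\phi^{1/2}a_k)$ are bounded, independently of $n$. One also has to check that the vector $\cV_{-i/2}\eta$ produced in the ultraproduct via \Cref{lem:anatytic_ext_V} is represented by exactly this componentwise sequence. This is immediate from the definitions of $\delta$ and of the bimodule action, so no uniform analyticity estimate in $n$ is needed.
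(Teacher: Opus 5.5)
Your proposal is correct and follows essentially the same route as the paper. You reduce to the core from \Cref{lem:core_V_z}, compute $\cV_{-i/2}$ there via \Cref{lem:anatytic_ext_V}, pass to the ultralimit componentwise, and identify the $n$-th term with $\langle\eta_n,\cV^{(n)}_{-i/2}\eta_n\rangle$, whose real part is nonnegative by the bounded case. You also make explicit steps the paper leaves implicit: the graph-norm continuity of $\eta\mapsto\Re\langle\eta,\cV_{-i/2}\eta\rangle$, the uniform boundedness of the representatives via $\mathcal E_{1/n}\leq\mathcal E$, and the analytic-continuation identity for $\cV^{(n)}_z$ from the proof of \Cref{thm:char_bdd}.
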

\begin{proof}
    We have to show that $\Re\langle \xi,\mathcal V_{-i/2}\xi\rangle_{\mathcal H}\geq 0$ for all $\xi\in\dom(\mathcal V_{-i/2})$. By \Cref{lem:core_V_z}, we may assume that $\xi=\sum_{j=1}^m \pi_l(a_j)\cdot\delta(b_j)$ with $a_j,b_j\in\mathfrak A_{\mathcal E}$ for $j\in \{1,\dots,m\}$. Moreover, \Cref{lem:anatytic_ext_V} that implies $\mathcal V_{-i/2}\xi=\sum_{j=1}^m \pi_l(\Delta_\phi^{1/2}a_j)\delta(\Delta_\phi^{1/2}b_j)$. If we combine this with the definition of $\delta$ and $\mathcal H$, we obtain
    \begin{align*}
        \Re\langle\xi,\mathcal V_{-i/2}\xi\rangle&=\Re\sum_{j,k=1}^m \langle \pi_l(a_j)\delta(b_j),\pi_l(\Delta_\phi^{1/2}a_k)\delta(\Delta_\phi^{1/2}b_k)\rangle_{\mathcal H}\\
        &=\lim_{n\to\omega}\Re\sum_{j,k=1}^m \langle \pi_l(a_j)\delta^{(n)}(b_j),\pi_l(\Delta_\phi^{1/2}a_k)\delta^{(n)}(\Delta_\phi^{1/2}b_k)\rangle_{\mathcal H^{(n)}}\\
        &=\lim_{n\to\omega}\Re\left\langle\sum_{j=1}^m \pi_l(a_j)\delta^{(n)}(b_j),\mathcal V^{(n)}_{-i/2}\left(\sum_{j=1}^m \pi_l(a_j)\cdot \delta^{(n)}(b_j)\right)\right\rangle_{\mathcal H^{(n)}}.
    \end{align*}
    This last expression is non-negative by our result in the bounded case.
\end{proof}

After this discussion, we arrive at the following description of (possibly unbounded) conservative completely Dirichlet forms in terms of derivations. Here we say that a Hilbert space $\cH$ is a \emph{Hilbert bimodule} over the unital $C^\ast$-algebra $A$ if it is endowed with commuting unital $\ast$-homomorphisms $\pi_l^\cH\colon A\to B(\cH)$ and $\pi_r^\cH\colon A^{\mathrm{op}}\to B(H)$. As in the case of correspondences, we write $x\xi y$ for $\pi_l^\cH(x)\pi_r^\cH(y^{\mathrm{op}})(y)\xi$ if $x,y\in A$ and $\xi\in\cH$.

\begin{theorem}\label{thm:char_unbdd}
    If $\mathcal E$ is a quantum Dirichlet form, then there exist
    \begin{itemize}
        \item a Hilbert bimodule $\cH$ over $A_{\mathcal E}$,
        \item an anti-unitary involution $\mathcal J\colon\cH\to\cH$,
        \item a strongly continuous one-parameter group $(\cV_s)_{s\in\IR}$ on $\mathcal H$ with $\cV_{-i/2}$ accretive,
        \item a closable operator $\delta\colon \mathfrak A_{\mathcal E}\to\mathcal H$
    \end{itemize}
    satisfying
    \begin{enumerate}[(i)]
        \item $\cH=\overline{\operatorname{span}}\{x\delta(a)y\mid a\in \mathfrak A_{\mathcal E},\,x,y\in A_{\mathcal E}\}$,
        \item $\mathcal J(x\eta y)=y^\ast (\mathcal J\eta)x^\ast$, $\cV_s(x\eta y)=\sigma^\phi_s(x)(\cV_s\eta)\sigma^\phi_s(y)$ for all $x,y\in A_{\mathcal E}$, $\eta\in\cH$, $s\in\IR$,
        \item $\cV_s\mathcal J=\mathcal J \cV_s$ for all $s\in\IR$,
        \item $\delta\circ J=\mathcal J\circ\delta$, $\delta\circ \Delta_\phi^{is}=\cV_s\circ\delta$ for all $s\in\IR$,
        \item $\delta(ab)=\pi_l(a)\delta(b)+\delta(a)\cdot J \pi_r(b)^\ast J$ for all $a,b\in\mathfrak A_{\mathcal E}$
    \end{enumerate}
    such that
    \begin{equation*}
        \dom(\mathcal E)=\dom(\overline\delta),\,\mathcal E(a)=\norm{\overline\delta(a)}_{\mathcal H}^2,\quad a\in\dom(\mathcal E).
    \end{equation*}
\end{theorem}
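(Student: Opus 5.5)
The plan is to assemble the ultraproduct objects built in this section and check each listed property from its finite-level counterpart. Fix a free ultrafilter $\omega$. For each $n$, \Cref{thm:char_bdd} applied to the bounded quantum Dirichlet form $\mathcal E_{1/n}$ (a quantum Dirichlet form by \Cref{lem:Moreau_reg_Dir}) gives a quadruple $(\cH^{(n)},\cJ^{(n)},(\cV^{(n)}_s)_{s\in\IR},\delta^{(n)})$. We take $\cH\subset\prod_{n\to\omega}\cH^{(n)}$, $\delta$, $\cJ$ and $\cV_s$ exactly as defined above. Then $\cH$ is a Hilbert bimodule over $A_{\mathcal E}$, since the actions of elements of $M$ on each $\cH^{(n)}$ are contractive and hence pass to the ultraproduct. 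Property (i) holds by definition of $\cH$.

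Next come the algebraic identities, checked coordinatewise. The map $\cJ$ is an anti-unitary involution on $\cH$: it is isometric and anti-linear on the ultraproduct, maps $\cH$ into itself by the computation above, and satisfies $\cJ^2=1$. The identity $\cJ(x\eta y)=y^\ast(\cJ\eta)x^\ast$ and the covariance $\cV_s(x\eta y)=\sigma^\phi_s(x)(\cV_s\eta)\sigma^\phi_s(y)$ in (ii) hold coordinatewise, and so does (iii). The relations $\delta\circ J=\cJ\circ\delta$ and $\delta\circ\Delta_\phi^{is}=\cV_s\circ\delta$ in (iv) hold on $\mathfrak A_{\mathcal E}$ because $\mathfrak A_{\mathcal E}$ is invariant under $J$ and $\Delta_\phi^{is}$ (\Cref{cor:Tomita_subalgebra}). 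For the Leibniz rule (v), take $a,b\in\mathfrak A_{\mathcal E}$. Then $ab=\pi_l(a)\pi_r(b)\Omega_\phi=\pi_l(a)\Omega_\phi\,J\pi_r(b)^\ast J$, so the finite-level property (v) of \Cref{thm:char_bdd} gives
\[\delta^{(n)}(ab)=\pi_l(a)\delta^{(n)}(b)+\delta^{(n)}(a)\cdot J\pi_r(b)^\ast J .\]
Since $ab\in\mathfrak A_{\mathcal E}$, passing to the $\omega$-limit gives (v). Here we also need $J\pi_r(b)^\ast J\in A_{\mathcal E}$. This is true: $J\pi_r(b)^\ast J=\pi_l(\Delta_\phi^{1/2}b)$ up to the standard identification, and $\Delta_\phi^{1/2}b\in\mathfrak A_{\mathcal E}$.

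The analytic statements have been established in the preceding lemmas. Closability of $\delta$, together with $\dom(\overline\delta)=\dom(\mathcal E)$ and $\norm{\overline\delta(a)}^2=\mathcal E(a)$, is \Cref{lem:deriv_unbdd}. Its proof rests on $\norm{\delta(a)}^2=\lim_{n\to\omega}\mathcal E_{1/n}(a)=\mathcal E(a)$, the monotone convergence of Moreau regularizations, and the fact that $\mathfrak A_{\mathcal E}$ is a form core (\Cref{prop:Tomita_form_dense}). Strong continuity of $(\cV_s)$ is the subsequent lemma, which uses the uniform bound $\norm{\cV^{(n)}_s}\le e^{\pi|s|}$ from \Cref{rem:norm_V_s}. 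Accretivity of $\cV_{-i/2}$ is the last proposition. It reduces to the core $\operatorname{span}\{\pi_l(a)\delta(b)\}$ (\Cref{lem:core_V_z}), on which $\cV_{-i/2}$ acts coordinatewise by \Cref{lem:anatytic_ext_V}, and then uses accretivity of each $\cV^{(n)}_{-i/2}$.

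The main obstacle is this accretivity step. Analytic generators do not pass to ultraproducts in general, so the accretivity must be read off on a core where the analytic continuation is explicit. That is exactly what \Cref{lem:anatytic_ext_V} and \Cref{lem:core_V_z} provide, using that $z\mapsto\Delta_\phi^{iz}b$ is entire in form norm for $b\in\mathfrak A_{\mathcal E}$. With these results in hand, the theorem is a collation of the above.
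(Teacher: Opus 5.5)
Your proposal is correct and follows the paper's route: you take the ultraproduct of the quadruples attached to the Moreau regularizations $\mathcal E_{1/n}$, get closability from \Cref{lem:deriv_unbdd} and strong continuity from the uniform bound $\norm{\cV^{(n)}_s}\le e^{\pi\abs{s}}$, and check accretivity of $\cV_{-i/2}$ on the core from \Cref{lem:core_V_z} via \Cref{lem:anatytic_ext_V}. One harmless slip: in fact $J\pi_r(b)^\ast J=\pi_l(\Delta_\phi^{-1/2}b)$, not $\pi_l(\Delta_\phi^{1/2}b)$, but this does not matter because $\mathfrak A_{\mathcal E}$ is invariant under all $\Delta_\phi^{z}$.
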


\section{Connection to Tomita correspondences}\label{sec:Tomita_corr}

In this section we discuss the connection of the representation of quantum Dirichlet forms to Tomita correspondences introduced in \cite{Wir22b}. In \Cref{thm:char_bdd} we have seen that every bounded quantum Dirichlet form $\mathcal E$ gives rise to a self-dual $M$-$M$ correspondence $(\cH,\cJ)$ and a strongly continuous group $(\cV_s)_{s\in \IR}$ satisfying $\cV_s\cJ=\cJ\cV_s$ for all $s\in \IR$ and $\cV_s(x\eta y)=\sigma^\phi_s(x)(\cV_s\eta)\sigma^\phi_s(y)$ for all $x,y\in M$, $\eta\in\cH$ and $s\in\IR$.

However, the group $(\cV_s)_{s\in\IR}$ consists of unitary operators if and only if the quantum Dirichlet form $\mathcal E$ is invariant under $(\Delta_\phi^{is})_{s\in \IR}$ so that $(\cH,\cJ,(\cV_s)_{s\in\IR})$ fails to be a Tomita correspondence as soon as $\mathcal E$ is not invariant under the modular group. In particular, the techniques from \cite{Shl99,Wir22b} to realize $\cH\subset L^2(\hat M)$ as bimodules with $\cJ\subset J_{\hat M}$, where $\hat M$ is a von Neumann algebra that contains $M$ with expectation, do not carry over to the general KMS-symmetric case.

In this section, we show how even if $(\cV_s)_{s\in\IR}$ is not a unitary group, it can be deformed to a strongly continuous unitary group $(\mathcal U_s)_{s\in\IR}$ in such a way that $(\cH,\cJ,(\mathcal U_s)_{s\in\IR})$ becomes a Tomita correspondence.

\begin{lemma}
    Let $(\cH,\cJ)$ be a self-dual $M$-$M$ correspondence and $(\cV_s)_{s\in\IR}$ a strongly continuous group on $\cH$ such that $\cV_s\cJ=\cJ\cV_s$ for all $s\in \IR$ and $\cV_s(x\eta y)=\sigma^\phi_s(x)(\cV_s\eta)\sigma^\phi_s(y)$ for all $x,y\in M$, $\eta\in\cH$ and $s\in\IR$. For $s\in\IR$, if $\cV_s=\tilde{\mathcal U_s}\abs{\cV_s}$ is the polar decomposition of $\cV_s$, then $\abs{\cV_s}$ is a bimodule map, $\tilde{\mathcal U_s}$ is unitary and
    \begin{equation*}
        \tilde{\mathcal U_s}(x\eta y)=\sigma_s(x)(\tilde{\mathcal U}_s\eta)\sigma_s(y)
    \end{equation*}
    for all $x,y\in A_{\mathcal E}$, $\eta\in\cH$.
\end{lemma}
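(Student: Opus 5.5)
The plan is to compute $\cV_s^\ast\cV_s$ and show that it commutes with the bimodule actions. Then the bimodule property and unitarity of the phase follow from the functional calculus.

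\textbf{Step 1: $\cV_s^\ast\cV_s$ is a bimodule map.} Fix $s\in\IR$. Since $\cV_s$ is invertible with inverse $\cV_{-s}$, it is a bounded bijection. For $x\in M$ the twisted covariance gives $\cV_s\pi_l(x)=\pi_l(\sigma^\phi_s(x))\cV_s$. Taking adjoints, and using that $\pi_l$ is a $\ast$-homomorphism and $\sigma^\phi_s$ is $\ast$-preserving, we get $\pi_l(x)\cV_s^\ast=\cV_s^\ast\pi_l(\sigma^\phi_s(x))$. Hence
\begin{equation*}
\cV_s^\ast\cV_s\pi_l(x)=\cV_s^\ast\pi_l(\sigma^\phi_s(x))\cV_s=\pi_l(x)\cV_s^\ast\cV_s .
\end{equation*}
The same computation works for the right action, since $\sigma^\phi_s$ is a $\ast$-automorphism and so is compatible with the opposite algebra. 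Thus $\cV_s^\ast\cV_s$ lies in the commutant of $\pi_l^\cH(M)\cup\pi_r^\cH(M^{\mathrm{op}})$. This commutant is a von Neumann algebra, so $\abs{\cV_s}=(\cV_s^\ast\cV_s)^{1/2}$ also lies in it, because it is a norm limit of polynomials in $\cV_s^\ast\cV_s$. Hence $\abs{\cV_s}$ is a bimodule map.

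\textbf{Step 2: the phase is unitary.} Since $\cV_s$ is bounded with bounded inverse, $\cV_s^\ast\cV_s\geq \norm{\cV_{-s}}^{-2}$. So $\abs{\cV_s}$ is invertible, and its inverse is again a bimodule map because it is a continuous function of $\abs{\cV_s}$. In the polar decomposition we then have $\tilde{\mathcal U}_s=\cV_s\abs{\cV_s}^{-1}$. This operator is surjective, being a product of invertible operators. It is isometric because
\begin{equation*}
\norm{\cV_s\abs{\cV_s}^{-1}\eta}^2=\langle \abs{\cV_s}^{-1}\eta,\cV_s^\ast\cV_s\abs{\cV_s}^{-1}\eta\rangle=\norm\eta^2 .
\end{equation*}
So $\tilde{\mathcal U}_s$ is unitary.

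\textbf{Step 3: covariance of the phase.} For $x,y\in M$ (in particular for $x,y\in A_{\mathcal E}$) we compute
\begin{equation*}
\tilde{\mathcal U}_s(x\eta y)=\cV_s(x(\abs{\cV_s}^{-1}\eta)y)=\sigma^\phi_s(x)\,\cV_s\abs{\cV_s}^{-1}\eta\,\sigma^\phi_s(y)=\sigma^\phi_s(x)(\tilde{\mathcal U}_s\eta)\sigma^\phi_s(y).
\end{equation*}
The first equality uses Step 1 and the second uses the covariance of $\cV_s$.

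The only point that needs care is Step 1, namely that taking adjoints of the twisted intertwining relation produces the same automorphism on the other side. This relies on $\sigma^\phi_s$ being $\ast$-preserving for real $s$ and on the actions being $\ast$-representations; everything after that is functional calculus. The hypothesis $\cV_s\cJ=\cJ\cV_s$ is not needed for these claims. It would presumably be used afterwards to show that $\tilde{\mathcal U}_s$ commutes with $\cJ$.
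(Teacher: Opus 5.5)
Your proof is correct and follows essentially the same route as the paper. The paper also takes adjoints in the twisted covariance relation to show that $\cV_s^\ast\cV_s$ commutes with both actions, concludes that $\abs{\cV_s}$ is a bimodule map, gets unitarity of the phase from invertibility of $\cV_s$, and then computes $\tilde{\mathcal U}_s(x\eta y)=\cV_s\abs{\cV_s}^{-1}(x\eta y)$. You make explicit a few points the paper leaves implicit, namely the bound $\cV_s^\ast\cV_s\geq\norm{\cV_{-s}}^{-2}$ and the functional-calculus step, and you are right that $\cV_s\cJ=\cJ\cV_s$ is not used.
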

\begin{proof}
    By definition of a strongly continuous group, $\mathcal V_s$ is invertible. Thus the partial isometry $\tilde{\mathcal U}_s$ in the polar decomposition is unitary. Note that
    \begin{align*}
    \mathcal V_s^\ast\mathcal V_s\pi_l^\cH(x)\pi_r^\cH(y)&
    =\mathcal V_s^\ast \pi_l^\cH(\sigma_s(x))\pi_r^\cH(\sigma^\phi_s(y))\mathcal V_s\\
    &=(\pi_l^\cH(\sigma_s(x^\ast))\pi_r^\cH(\sigma_s(y^\ast)) \mathcal V_s)^\ast\mathcal V_s\\
    &=(\mathcal V_s \pi_l^{\mathcal H}(x^\ast)\pi_r^\cH(y^\ast))^\ast\mathcal V_s\\
    &=\pi_l^\cH(x)\pi_r^\cH(y)\mathcal V_s^\ast\mathcal V_s
    \end{align*}
    for all $x,y\in M$. Therefore $\abs{\mathcal V_s}$ is a bimodule map and
\begin{equation*}
    \tilde{\mathcal U}_s(x\eta y)=\cV_s\abs{\cV_s}^{-1}(x\eta y)=\cV_s(x(\abs{\cV_s}^{-1}\eta)y)=\sigma_s(x)(\tilde{\mathcal U}_s\xi)\sigma_s(y)
\end{equation*}
for all $x,y\in A_{\mathcal E}$ and $\eta\in\cH$.
\end{proof}

Usually, $(\tilde{\mathcal U_s})_{s\in\mathbb R}$ is not a group. This can be mended as follows. 

\begin{proposition}\label{prop:deform_unitary_group}
    Let $\mu$ be the measure on $\IR$ with density $\frac{d\mu}{dt}=\frac{2}{\cosh 2\pi t}$, let $(\tilde\cH,\tilde \cJ)$ be a self-dual $M$-$M$ correspondence and let $\cH=L^2(\IR,\mu;\tilde\cH)$. Define the structure of an $M$-$M$ correspondence on $\cH$ by $(x\eta y)(t)=\sigma^\phi_t(x)\eta(t)\sigma^\phi_t(y)$ and an anti-unitary involution $\cJ\colon \cH\to \cH$ by $(\cJ\eta)(t)=\tilde\cJ\eta(t)$. For $s\in\IR$ let
    \begin{equation*}
        \mathcal V_s\colon \cH\to \cH,\,(V_s \xi)(t)=\xi(s+t).
    \end{equation*}
    If $\cV_s=\tilde{\mathcal U}_s\abs{\cV_s}$ is the polar decomposition of $\cV_s$ for $s\in\IR$, then there exists a strongly continuous unitary group $(\mathcal U_s)_{s\in\IR}$ on $\cH$ such that $\tilde{\mathcal U}_{s/n}^n\to \mathcal U_s$ as $n\to\infty$ in strong operator toplogy for all $s\in \IR$. The difference of the generators of $(\cV_s)_{s\in\IR}$ and $(\tilde{\mathcal U}_s)_{s\in\IR}$ is a bounded bimodule map. Moreover, $(\cH,\cJ,(\mathcal U_s)_{s\in\IR})$ is a Tomita correspondence. 
\end{proposition}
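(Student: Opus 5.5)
The plan is to compute everything explicitly, since $\cV_s$ is a weighted translation. Write $w(t)=\frac{2}{\cosh 2\pi t}$ for the density of $\mu$. By a change of variables,
\begin{equation*}
    \langle \cV_s\xi,\cV_s\eta\rangle=\int_\IR\langle \xi(t),\eta(t)\rangle_{\tilde\cH}\,w(t-s)\,dt.
\end{equation*}
Hence $\cV_s^\ast\cV_s$ is multiplication by the bounded, strictly positive function $t\mapsto w(t-s)/w(t)$. By the bound in \Cref{prop:trans_weighted_L2}, this function lies in $[e^{-2\pi\abs s},e^{2\pi\abs s}]$. Consequently $\abs{\cV_s}$ is multiplication by $(w(t-s)/w(t))^{1/2}$, and it is invertible. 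This gives the polar part explicitly:
\begin{equation*}
    (\tilde{\mathcal U}_s\xi)(t)=(\cV_s\abs{\cV_s}^{-1}\xi)(t)=\Big(\frac{w(t+s)}{w(t)}\Big)^{1/2}\xi(t+s).
\end{equation*}

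The key observation I would then verify is that $(\tilde{\mathcal U}_s)_{s\in\IR}$ is already a group. Indeed, the scalar factors telescope:
\begin{equation*}
    \Big(\frac{w(t+s)}{w(t)}\Big)^{1/2}\Big(\frac{w(t+s+r)}{w(t+s)}\Big)^{1/2}=\Big(\frac{w(t+s+r)}{w(t)}\Big)^{1/2},
\end{equation*}
so $\tilde{\mathcal U}_s\tilde{\mathcal U}_r=\tilde{\mathcal U}_{s+r}$. Therefore $\tilde{\mathcal U}_{s/n}^n=\tilde{\mathcal U}_s$ for all $n$, and we may simply set $\mathcal U_s=\tilde{\mathcal U}_s$; the convergence claim is then trivial. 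Each $\mathcal U_s$ is unitary, either by the preceding lemma or directly: the map $\xi\mapsto w^{1/2}\xi$ is a unitary from $L^2(\IR,\mu;\tilde\cH)$ onto $L^2(\IR,dt;\tilde\cH)$ (up to the constant normalisation) that intertwines $\mathcal U_s$ with plain translation. Strong continuity follows from the same conjugation, since translation is strongly continuous on $L^2(\IR,dt;\tilde\cH)$.

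For the generators, on smooth compactly supported $\tilde\cH$-valued functions (a common core, invariant under both groups, so \Cref{lem:core_analytic_generator}-type arguments apply) the generator of $\cV$ is $\frac{d}{dt}$. Differentiating the formula for $\mathcal U_s$ at $s=0$ shows the generator of $\mathcal U$ is $\frac{d}{dt}+\frac12(\log w)'(t)=\frac{d}{dt}-\pi\tanh(2\pi t)$. The domains coincide because the difference is bounded, so the difference of the generators is multiplication by $\pi\tanh(2\pi t)$ (up to sign convention), with norm at most $\pi$. Multiplication by a scalar function of $t$ commutes with the pointwise bimodule actions $(x\eta y)(t)=\sigma^\phi_t(x)\eta(t)\sigma^\phi_t(y)$, so it is a bimodule map.

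Finally, the Tomita correspondence axioms are checked pointwise. Since the weight factor is real and scalar, and $\tilde\cJ$ is antilinear but preserves real scalars, $\mathcal U_s\cJ=\cJ\mathcal U_s$. For the covariance,
\begin{equation*}
    (\mathcal U_s(x\eta y))(t)=\Big(\frac{w(t+s)}{w(t)}\Big)^{1/2}\sigma^\phi_{t+s}(x)\eta(t+s)\sigma^\phi_{t+s}(y)=\big(\sigma^\phi_s(x)(\mathcal U_s\eta)\sigma^\phi_s(y)\big)(t).
\end{equation*}
Together with the self-dual correspondence structure $(\cH,\cJ)$, which is immediate, this shows $(\cH,\cJ,(\mathcal U_s)_{s\in\IR})$ is a Tomita correspondence. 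There is no real obstacle; the only point needing mild care is the domain identification for the generator difference, which the invariant core and the boundedness of $\tanh$ handle.
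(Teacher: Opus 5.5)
Your proof is correct, and it is substantially shorter than the paper's. Both arguments start from the same explicit polar part, $(\tilde{\mathcal U}_s\xi)(t)=(w(t+s)/w(t))^{1/2}\xi(t+s)$, which the paper writes as $e^{-\frac12(V(s+t)-V(t))}\xi(s+t)$ with $V=-\log w$.

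The paper does not use the fact that these weights telescope. It treats $(\tilde{\mathcal U}_s)_{s\in\IR}$ as a family that is not a group and computes $\lim_{s\to 0}\frac1s(\cV_s\xi-\tilde{\mathcal U}_s\xi)$ via a uniform Taylor expansion of an auxiliary function $F(x,y)$, which identifies the bounded multiplier $\frac12V'=\pi\tanh(2\pi t)$. It then uses the bounded perturbation theorem and Chernoff's product formula to build $\mathcal U_s=\lim_n\tilde{\mathcal U}_{s/n}^n$. Unitarity comes from a strong limit of unitaries, and the covariance and $\cJ$-commutation follow by passing to the limit.

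Your cocycle identity says exactly that $\tilde{\mathcal U}_s=W^{-1}T_sW$. Here $T_s$ is translation on $L^2(\IR,dt;\tilde\cH)$ and $W\xi=w^{1/2}\xi$ is a unitary from $L^2(\IR,\mu;\tilde\cH)$ onto $L^2(\IR,dt;\tilde\cH)$; no normalisation constant is needed, since $\mu$ has density exactly $w$. This gives the group property, unitarity and strong continuity at once. The Chernoff step becomes vacuous, $\mathcal U_s=\tilde{\mathcal U}_s$ is explicit, and the Tomita axioms can be checked pointwise. In particular, the paper's remark that $(\tilde{\mathcal U}_s)$ is usually not a group does not apply in this concrete setting.

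What the paper's perturbative route buys is robustness for an abstract group as in the preceding lemma, whose polar parts need not compose to a group. Your argument exploits the specific weighted-translation structure, which is all this proposition requires.

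Two small remarks. First, for identifying the generators you want the standard core criterion for generators of $C_0$-semigroups: a dense subspace of the domain that is invariant under the semigroup is a core (cf.\ \cite{EN00}). \Cref{lem:core_analytic_generator} concerns the operators $V_z$ instead. Second, your sign is correct: the generator of $\mathcal U$ is $\frac{d}{dt}-\pi\tanh(2\pi t)$. The paper's displayed limit $\frac12V'\xi+A\xi$ contains a harmless sign slip.
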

\begin{proof}
Let us first compute $\mathcal V_s^\ast$ explicitly. For this purpose, we write $V(t)=-\log \frac 2{\cosh 2\pi t}$. For $\xi,\eta\in L^2(\IR,\mu;\tilde\cH)$ we have
\begin{align*}
    \langle \mathcal V_s\xi,\mathcal V_s\eta\rangle
    &=\int_{\IR} \langle \xi(s+t),\eta(s+t)\rangle e^{-V(t)}\,dt\\
    &=\int_\IR \langle \xi(t),\eta(t)\rangle e^{V(t)-V(t-s)}e^{-V(t)}\,dt.
\end{align*}
Thus $\mathcal V_s^\ast \mathcal V_s \xi(t)=e^{V(t)-V(t-s)}\xi(t)$ and $\tilde{\mathcal U}_s\xi(t)=(\mathcal V_s^\ast\mathcal V_s)^{-1/2}\xi(s+t)=e^{-\frac 1 2(V(s+t)-V(t))}\xi(s+t)$.

We note that
\begin{align*}
    \frac 1 s(\mathcal V_s\xi-\tilde{\mathcal U}_s\xi)(t)=\frac 1 s(1-e^{-\frac 1 2(V(s+t)-V(t))})\xi(s+t)=\frac 1 s(1-e^{-\frac 1 2(V(s+t)-V(t))})\mathcal V_s \xi(t).
\end{align*}
For $x\in [-1,1]\setminus \{0\}$ and $y\in [-1,1]$ let 
\begin{equation*}
    F(x,y)=\frac 1 x\left(1-\left(\frac{1+y}2 e^x+\frac {1-y}2 e^{-x}\right)^{-1/2}\right)
\end{equation*}
and note that $2\pi F(2\pi s,\tanh 2\pi t)=\frac 1 s(1-e^{-\frac 1 2(V(s+t)-V(t))})$.

We have $\frac{1+y}2 e^x+\frac {1-y}2 e^{-x}=1+xy+O(x^2)$ as $x\to 0$ uniformly in $y\in [-1,1]$ and $\left(\frac{1+y}2 e^x+\frac {1-y}2 e^{-x}\right)^{-1/2}=1-\frac{xy}2+O(x^2)$ as $x\to 0$ uniformly in $y\in [-1,1]$, thus $F(x_n,y_n)\to \frac{y}2$ as $x_n\to 0$ and $y_n\to y$. Therefore, $F$ extends to a continuous function on $[-1,1]^2$. In particular, it is bounded.

Hence, if $\xi\in L^2(\mathbb R,\mu;\tilde\cH)$, then
\begin{align*}
    \int_\IR\left\lVert \frac 1 s(1-e^{-\frac 1 2(V(s+t)-V(t))})\xi(t)-\pi\tanh (2\pi t)\xi(t)\right\rVert^2\,d\mu(t)\to 0
\end{align*}
as $s\to 0$ by the dominated convergence theorem.

Together with the strong convergence $\mathcal V_s\to \id$ we get
\begin{equation*}
    \frac 1 s(\mathcal V_s\xi-\tilde{\mathcal U}_s\xi)\to B \xi,
    \end{equation*}
where $B\xi(t)=\frac 1 2 V^\prime(t)\xi(t)$. As $V^\prime\in L^\infty(\mathbb R)$, this implies that whenever $\xi$ belongs to the domain of the generator $A$ of $(\mathcal V_s)$, then
\begin{equation*}
    \frac 1 s(\tilde{\mathcal U}_s\xi-\xi)=\frac 1 s(\tilde{\mathcal U}_s \xi-\mathcal V_s\xi)+\frac  1s(\mathcal V_s\xi-\xi)\to \frac 1 2 V'\xi+A\xi.
\end{equation*}
Since $\pm (A+B)$ is a bounded perturbation of the semigroup generator $\pm A$, it follows that $A+B$ generates a strongly continuous group $(\mathcal U_s)$ on $L^2(\IR,\mu;\tilde\cH)$ and by Chernoff's theorem \cite[Theorem III.5.2]{EN00}, we have
\begin{equation*}
    \mathcal U_s\xi=\lim_{n\to\infty}\tilde{\mathcal U}_{s/n}^n\xi
\end{equation*}
for all $\xi\in L^2(\IR,\mu;\tilde\cH)$ and $s\geq 0$. Moreover, since $-(A+B)$ also generates a semigroup, we see that $\mathcal U_s$ is invertible for every $s\geq 0$ and we can extend $(\mathcal U_s)$ to a strongly continuous group by defining $\mathcal U_{-s}=\mathcal U_s^{-1}$ for $s\geq 0$. Moreover, as a strong limit of unitary operators, $\mathcal U_s$ is an isometry for every $s\in\mathbb R$.

The groups $(\mathcal U_s)$ and $(\mathcal V_s)$ are related by the following identity \cite[Corollary III.1.7]{EN00}:
\begin{equation*}
    \mathcal U_s\xi=\mathcal V_s\xi+\int_0^s \mathcal V_{s-r}B\mathcal U_r\xi\,dr.
\end{equation*}
It follows that
\begin{align*}
    \norm{\mathcal U_s\xi-\mathcal V_s\xi}\leq \abs{s}\pi e^{\pi\abs{s}}\norm{\xi}.
\end{align*}

From $\tilde{\mathcal U}_s(x\xi y)=\sigma_s(x)(\tilde{\mathcal U}_s\xi)\sigma_s(y)$ it is easy to deduce $\mathcal U_s(x\xi y)=\sigma_s(x)(\mathcal U_s \xi)\sigma_s(y)$. Furthermore, since $\mathcal V_s$ commutes with $\mathcal J$ and $\mathcal J$ is self-adjoint, $\mathcal U_s$ also commutes with $\mathcal J$. Therefore, $(L^2(\IR,\mu;\tilde\cH),\mathcal J,(\mathcal U_s)_{s\in\IR})$ is a Tomita correspondence.
\end{proof}

\begin{theorem}\label{thm:Tomita_corr_Dir_form}
    If $\mathcal E$ is a bounded quantum Dirichlet form, then there exists a von Neumann algebra $\hat M\supset M$ with a faithful normal conditional expectation $E\colon \hat M\to M$, a bounded self-adjoint $M$-bimodule map $B\colon L^2(\hat M)\to L^2(\hat M)$ and a bounded linear map $\delta\colon L^2(M)\to L^2(\hat M)$ satisfying
    \begin{itemize}
        \item $J_{\phi\circ E}\circ\delta=\delta\circ J_\phi$
        \item $e^{s(i\log\Delta_{\phi\circ E}+B)}\circ \delta=\delta\circ \Delta_\phi^{is}$, where $(e^{s(i\log\Delta_{\phi\circ E}+B)})_{s\in\IR}$ is the group generated by $i\log\Delta_{\phi\circ E}-B$,
        \item $\delta(ab)=\pi_l(a)\delta(b)+\delta(a)\cdot J_{\hat M}\pi_r(b)^\ast J_{\hat M}$ for all $a,b\in\mathfrak A_{\mathcal E}$
        \end{itemize}
    such that
    \begin{equation*}
        \mathcal E(a)=\norm{\delta(a)}_{L^2(\hat M)}^2,\quad a\in L^2(M).
    \end{equation*}
\end{theorem}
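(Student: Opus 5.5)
Start from \Cref{thm:char_bdd} in its concrete form from the proof of that theorem: $\cH\subset L^2(\IR,\mu;\tilde\cH)$ is the closed sub-bimodule generated by the range of $\delta$. It carries the pointwise twisted bimodule structure $(x\eta y)(t)=\sigma^\phi_t(x)\eta(t)\sigma^\phi_t(y)$, the involution $\cJ$ and the translation group $(\cV_s)$. \Cref{prop:deform_unitary_group} applied to $L^2(\IR,\mu;\tilde\cH)$ gives a strongly continuous unitary group $(\mathcal U_s)$ making $(L^2(\IR,\mu;\tilde\cH),\cJ,(\mathcal U_s))$ a Tomita correspondence over $(M,\phi)$. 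Its generator differs from that of $(\cV_s)$ by the bounded multiplication operator $B\xi(t)=\frac12V'(t)\xi(t)$, with $V'(t)=2\pi\tanh 2\pi t$. This $B$ is self-adjoint, since $V'$ is real. It commutes with the bimodule action, because that action is fibrewise in $t$, and it commutes with $\cJ$, because $V'$ is real and $\cJ$ acts pointwise. To avoid having to check that the sub-bimodule $\cH$ is invariant under $B$ and $\mathcal U_s$, I will work with the full space $L^2(\IR,\mu;\tilde\cH)$ and regard $\delta$ as taking values there. Nothing in the statement requires the cyclicity condition (i).

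Next, apply the operator-valued semicircular (free Araki--Woods) construction recalled at the end of the preliminaries to this Tomita correspondence. This yields $\hat M\supset M$, a faithful normal conditional expectation $E\colon\hat M\to M$ and $\hat\phi=\phi\circ E$ with $L^2(\IR,\mu;\tilde\cH)\subset L^2(\hat M)$, $J_{\hat M}|=\cJ$ and $\Delta_{\hat\phi}^{is}|=\mathcal U_s$. The embedding is an $M$-bimodule embedding. It is not literally of $\tilde\cH$ but of the correspondence where the $M$-actions are the given ones, and this matches because the semicircular construction uses exactly the bimodule structure of the Tomita correspondence. Extend $B$ to $L^2(\hat M)$ by letting it act as $B$ on $L^2(\IR,\mu;\tilde\cH)$ and as $0$ on the orthogonal complement. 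The complement is an $M$-sub-bimodule because the embedded space is one and the actions are $\ast$-preserving, so the extension is still a bounded self-adjoint bimodule map.

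It remains to verify the listed properties. $J\circ\delta=\delta\circ J$ follows from $\cJ\delta=\delta J$ and $J_{\hat M}|=\cJ$. Both $i\log\Delta_{\hat\phi}$ and $B$ preserve the embedded subspace and vanish or act compatibly on its complement. Hence the group generated by their combination restricts on $L^2(\IR,\mu;\tilde\cH)$ to the group generated by $A+B-B=A$ (with the sign convention the statement fixes), which is $(\cV_s)$. Combined with $\cV_s\delta=\delta\Delta_\phi^{is}$, this gives the second bullet. The Leibniz rule is (v) of \Cref{thm:char_bdd} together with the fact that the embedding is bimodular, and $\mathcal E(a)=\norm{\delta a}^2$ holds because the embedding is isometric.

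The main obstacle is the sign and domain bookkeeping in the second bullet. The statement writes both $+B$ and $-B$, so I will pin down the generator $A$ of $\cV$ as the generator of $\mathcal U$ minus $B$, and check carefully that invariance of the subspace under $\log\Delta_{\hat\phi}$ passes to the bounded perturbation, for instance via the Dyson–Phillips series.
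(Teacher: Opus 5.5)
Your proposal is correct and is essentially the paper's proof. Both arguments apply \Cref{prop:deform_unitary_group} on all of $L^2(\IR,\mu;\tilde\cH)$, embed this space via the operator-valued free Araki--Woods construction, extend $B$ to $L^2(\hat M)$, and restrict the perturbed group to recover $(\cV_s)$. The one difference is that you extend $B$ by $0$ on the orthogonal complement, which is a sub-bimodule because $M$ acts on both sides by $\ast$-representations; this is simpler than the paper's tensor-power extension and suffices, since only the restriction to the embedded subspace matters.

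On the sign you flagged: differentiating $\tilde{\mathcal U}_s\xi(t)=e^{-\frac12(V(s+t)-V(t))}\xi(s+t)$ at $s=0$ shows that the generator of $(\mathcal U_s)$ is $A-\frac12 V'$, which is the skew-adjoint choice on $L^2(\IR,\mu;\tilde\cH)$, rather than $A+B$. So with $B=\frac12V'$ one gets $\cV_s=e^{s(i\log\Delta_{\phi\circ E}+B)}$ on the embedded subspace, i.e. the group generated by $i\log\Delta_{\phi\circ E}+B$.
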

\begin{proof}
    By \Cref{prop:deform_unitary_group}, we get a Tomita correspondence $(\mathcal H,\mathcal J,(\mathcal U_s))$, a strongly continuous group $(\mathcal V_s)$ on $\mathcal H$ and a bounded derivation $\delta\colon L^2(M)\to \mathcal H$ such that $\delta\circ J_\phi=\mathcal J\circ \delta$, $\delta\circ \Delta_\phi^{is}=\mathcal V_s\circ \delta$ and $\mathcal E(a)=\norm{\delta(a)}_2^2$. Moreover, if $A$ denotes the generator of $\mathcal V_s$, then there exists a bounded self-adjoint bimodule map $B\colon \mathcal H\to \mathcal H$ such that $A+B$ is the generator of $(\mathcal U_s)$.

    Following \cite[Section 4.2]{Wir22b}, we can define the operator-valued free Araki--Woods algebra $\hat M=\Gamma_0(\mathcal H,\mathcal J,(\mathcal U_s))$ over $(\cH,\cJ,(\mathcal U_s)_{s\in \IR})$, which comes with a canonical faithful normal conditional expectation $E\colon \hat M\to M$ and a canonical identification $L^2(\hat M)\cong \mathcal F(_M\mathcal H_M)\supset \mathcal H$ \cite[Theorem 4.15]{Wir22b}. Moreover, $\Delta_{\phi\circ E}^{is}|_\mathcal H=\mathcal U_s$ by \cite[Theorem 5.18]{KW25}.

    Since $B\colon\mathcal H\to \mathcal H$ is a bounded self-adjoint bimodule map, we can extend it to a bounded self-adjoint bimodule map on $\mathcal F(_M\mathcal H_M)$ by defining $B|_{L^2(M)}=\id_{L^2(M)}$ and $B|_{\mathcal H^{\otimes_\phi n}}=\norm{B}(B/\norm{B})^{\otimes_\phi n}$. Since both $B$ and $\Delta_\phi$ leave $\cH$ invariant, it follows that the group generated by $i\log\Delta_{\phi\circ E}-B$ leaves $\cH$ invariant, and since its generator is an extension of the generator of $(\mathcal V_s)_{s\in\IR}$, it is easy to see that $\mathcal V_s$ is a restriction of $e^{s(i\log\Delta_{\phi\circ E}+B)}$
\end{proof}

\bibliographystyle{alpha}
\bibliography{ref}

\end{document}